\documentclass[11pt]{article}
\usepackage[T1]{fontenc}
\usepackage[utf8]{inputenc}
\usepackage{lmodern,microtype}
\usepackage[margin=1in]{geometry}
\usepackage{amsmath,amssymb,amsthm,mathtools}
\usepackage{tikz}
\PassOptionsToPackage{hyphens}{url}
\usepackage[colorlinks=true,linkcolor=blue,citecolor=blue,urlcolor=blue]{hyperref}

\numberwithin{equation}{section}
\numberwithin{figure}{section}
\numberwithin{table}{section}
\theoremstyle{plain}
\newtheorem{theorem}{Theorem}[section]
\newtheorem{proposition}[theorem]{Proposition}
\newtheorem{lemma}[theorem]{Lemma}
\newtheorem{corollary}[theorem]{Corollary}
\theoremstyle{definition}
\newtheorem{hypothesis}[theorem]{Hypothesis}
\newtheorem{definition}[theorem]{Definition}
\theoremstyle{remark}
\newtheorem{remark}[theorem]{Remark}

\newcommand{\R}{\mathbb R}
\newcommand{\D}{\mathbb D}
\newcommand{\Horn}{\mathcal G} % the Gaussian horn (Section 5)
\newcommand{\Om}{\Omega}
\newcommand{\norm}[2]{\left\lVert #1\right\rVert_{#2}}
\DeclareMathOperator{\dist}{dist}

\title{Hot Spots: From Smooth to Nonsmooth \\Simply Connected Planar Domains}
\author{Rodrigo Ba\~{n}uelos}
\date{October 20, 2026}

\begin{document}
\maketitle

\begin{abstract}

Assuming the absence of interior local extrema for first positive Neumann eigenfunctions on bounded smooth simply connected planar domains, as reported in \cite{SmoothInput}, we extend the hot spots conclusion to simply connected domains of finite area with compact Sobolev embedding and simple first positive Neumann eigenvalue. The supremum and infimum are approached in arbitrarily thin boundary layers. For bounded domains and eigenfunctions continuous up to the boundary, both extrema are attained on the boundary. We also transfer polynomial Neumann heat-kernel bounds uniformly along a conformal exhaustion and construct explicit bounded and unbounded examples, including domains with cusps and a domain with infinitely many corners and bottlenecks. Some of these examples admit no polynomial Neumann heat-kernel bound. 
\end{abstract}

\section*{AI Disclosure}
The initial draft of Section~\ref{sec:main}, including the proof extending the smooth-domain assertion reported in \cite{SmoothInput} to bounded nonsmooth simply connected domains, was generated entirely by ChatGPT (6.1 Sol) in response to a few human prompts. Independently and concurrently, K.~Burdzy used ChatGPT to obtain an almost identical theorem and proof, as reported in \cite[Human Commentary, item (ii)]{BurdzyNote}.

The initial drafts of Sections~\ref{sec:HK}--\ref{sec:withoutSimplicity}, which develop the heat-kernel approach, the nonsmooth examples, the unbounded examples, and the discussion of simplicity, were also generated by ChatGPT (6.1 Sol), with substantial input, guidance, and scrutiny from the author. The present version incorporates a further AI-assisted review and revision of the arguments, references, and exposition. The author's main interest was to identify additional examples of domains with nonsmooth boundaries and compact Neumann resolvent, including examples satisfying polynomial heat-kernel bounds, for which the first positive Neumann eigenvalue is simple. The aim was to apply the result in Section~\ref{sec:main} to these domains. The mathematical ideas in these sections, including the introduction of heat-kernel bounds and cross-cut domains and techniques, are due to the author. The author reviewed and edited all AI-assisted material and checked the arguments. 
The smooth-domain assertion enters only through the explicit Hypothesis~\ref{hyp:smooth}. Its proof is not assessed here.

The author does not intend to submit these notes for journal publication. 

\clearpage
\setcounter{tocdepth}{2}
\tableofcontents
\clearpage
\section*{Introduction}
\phantomsection
\addcontentsline{toc}{section}{Introduction}

The hot spots conjecture of J.~Rauch asserts that a first positive Neumann eigenfunction of a planar domain attains its maximum and minimum on the boundary. See \cite{BB} for a precise formulation. The conjecture has inspired a vast literature, with connections to probability, analysis, partial differential equations, and geometry. These notes make no attempt to survey this literature, and results closely related to the questions considered here may have been overlooked. These notes take an assertion about smooth domains as an external hypothesis and derive a weaker conclusion for domains with irregular boundary.

\paragraph{The hypothesis.}
The external input, reported in \cite{SmoothInput} and stated as Hypothesis~\ref{hyp:smooth}, is that on a bounded smooth simply connected planar domain no first positive Neumann eigenfunction has an interior local maximum or minimum. The assertion in \cite{SmoothInput} is stronger, namely that such eigenfunctions have no interior critical points. Its proof is not examined here.

\paragraph{The main result.}
Let $\Om\subset\R^2$ be a simply connected domain of finite area, and suppose that the embedding $H^1(\Om)\hookrightarrow L^2(\Om)$ is compact and that the first positive Neumann eigenvalue $\lambda_1(\Om)$ is simple. Theorem~\ref{thm:main} states that, under Hypothesis~\ref{hyp:smooth}, every real eigenfunction $u$ for $\lambda_1(\Om)$ satisfies
\[
 \sup_\Om u=\lim_{\delta\downarrow0}\ \sup\{u(x):x\in\Om,\ \dist(x,\partial\Om)<\delta\},
\]
and the same identity for the infimum. In words: the supremum and the infimum are approached in arbitrarily thin boundary layers. If $\Om$ is bounded and $u$ is continuous on $\overline\Om$, this says that the global extrema of $u$ are attained on the boundary. The conclusion is nonstrict. It does not exclude an interior point at which an extremal value is also attained. No regularity of $\partial\Om$ is assumed.

The proof is by conformal exhaustion. Given a conformal bijection $F:\D\to\Om$, the domains $\Om_r=F(r\D)$, $0<r<1$, are bounded by analytic Jordan curves and increase to $\Om$. Reflection in the parameter disk extends functions from $\Om_r$ to $\Om$ with at most twice the energy. Compact embedding then gives convergence of the first positive eigenvalues and locally uniform convergence of the eigenfunctions. If $u$ had a compact superlevel set in $\Om$ above its boundary-layer value, the approximating eigenfunctions would have interior local maxima on the $\Om_r$, contradicting Hypothesis~\ref{hyp:smooth}. Burdzy's note \cite{BurdzyNote}, obtained independently, uses the same mechanism. It is stated for bounded domains but the proof is essentially the same for domains of finite area. 

Two recent preprints give further developments on the hot spots
problem. Deng, Jiang, and Yang \cite{DengJiangYang2026} prove that
the maximum modulus of every first positive Neumann eigenfunction
on a bounded convex planar domain is attained only on the boundary.
Their result requires neither boundary smoothness nor simplicity
of the eigenvalue. For centrally symmetric convex planar domains,
they obtain the strict hot spots conclusion for both extrema.
Atar and Burdzy \cite{AtarBurdzy2026} construct a bounded starlike
Lipschitz domain in three dimensions with simple first positive
Neumann eigenvalue whose corresponding eigenfunction has its
unique global maximum at an interior point.

\paragraph{Two routes, and the role of heat-kernel bounds.}
Section~\ref{sec:main} proves Theorem~\ref{thm:main} under compact embedding alone. Section~\ref{sec:HK} concerns the stronger assumption of a polynomial Neumann heat-kernel bound $p_\Om(t,x,y)\le Ct^{-N/2}$ for $0<t\le1$, where $N>2$ is an exponent, not the dimension. Such a bound implies compact embedding, so the hot-spots conclusion for these domains is a special case of Theorem~\ref{thm:main}. Section~\ref{sec:HK} is included for Proposition~\ref{prop:HK}, which shows that a bound on the target transfers to every $\Om_r$ with $r$ close to $1$, with the same exponent and a constant independent of $r$. This is the input required by the stability theory of Ba\~{n}uelos--Pang \cite{BP}, and it gives a second proof of the hot-spots conclusion for this class.

\paragraph{Organization.}
Section~\ref{sec:examples} lists classes of domains with polynomial heat-kernel bounds: Lipschitz domains, quasidisks, H\"older graph domains, and power cusps. Section~\ref{sec:simplicity} compares the Neumann eigenvalues of a domain lying between two graphs with those of a weighted one-dimensional problem obtained by averaging over vertical cross-cuts. This gives explicit simplicity criteria and explicit examples: Lipschitz tubes, quadratic cusps, and an exponential cusp that has compact embedding and a simple first positive eigenvalue but no polynomial heat-kernel bound. Section~\ref{sec:unboundedFiniteArea} treats two unbounded examples of finite area: the Gaussian horn and a rooms-and-passages domain with infinitely many bottlenecks and corners. Section~\ref{sec:withoutSimplicity} shows that, without simplicity, the exhaustion argument still yields at least one first positive eigenfunction satisfying the boundary-layer conclusion. Appendix~\ref{app:classes} collects further classes with polynomial heat-kernel bounds (quasihyperbolic H\"older domains, finite decompositions, a slit disk) that are not needed for the main line.

\section{Hot spots under compact Sobolev embedding}\label{sec:main}

\subsection{Setting and statements}
Throughout, Sobolev spaces and variational arguments are taken over the real scalars. A \emph{domain} is a nonempty connected open set, and $H^1(\Om)$ is the Sobolev space of square-integrable functions on $\Om$ with square-integrable weak first derivatives. For a domain $\Om\subset\R^2$ of finite area, let $A_\Om$ be the nonnegative self-adjoint Neumann Laplacian associated with the closed form
\[
 \mathcal E_\Om(v,w)=\int_\Om\nabla v\cdot\nabla w,
 \qquad \mathcal D(\mathcal E_\Om)=H^1(\Om).
\]
Its semigroup is $T_t^\Om=e^{-tA_\Om}$, so the associated heat equation is $\partial_tv=\Delta v$ (Brownian motion with generator $\tfrac12\Delta$ corresponds to a change of time). This definition requires neither a trace nor a pointwise normal derivative on $\partial\Om$. We write $\lambda_0(\Om)\le\lambda_1(\Om)\le\lambda_2(\Om)\le\cdots$ for the min--max values of $\mathcal E_\Om$ relative to the $L^2(\Om)$ norm,
\[
 \lambda_k(\Om)=\inf_{W}\ \sup_{w\in W\setminus\{0\}}
 \frac{\int_\Om|\nabla w|^2}{\int_\Om w^2},
\]
the infimum running over all $(k+1)$-dimensional subspaces $W\subset H^1(\Om)$. Compactness of the embedding $H^1(\Om)\hookrightarrow L^2(\Om)$ means that every bounded sequence in $H^1(\Om)$ has an $L^2(\Om)$-convergent subsequence. When the embedding is compact, the $\lambda_k(\Om)$ are the eigenvalues of $A_\Om$ counted with multiplicity; $\lambda_0(\Om)=0$ because constants lie in $L^2(\Om)$, $\lambda_1(\Om)>0$ by connectedness, finite area, and compact embedding (Lemma~\ref{lem:anchor}), and $\lambda_1(\Om)$ is the \emph{first positive eigenvalue}. It is \emph{simple} if its eigenspace is one-dimensional, equivalently if $\lambda_1(\Om)<\lambda_2(\Om)$. A \emph{first positive Neumann eigenfunction} is a nonzero real $u\in H^1(\Om)$ with
\[
 \int_\Om\nabla u\cdot\nabla\varphi=\lambda_1(\Om)\int_\Om u\varphi
 \qquad\text{for all }\varphi\in H^1(\Om).
\]
By interior elliptic regularity it is smooth in $\Om$, and we always use this smooth representative. For $\delta>0$, the \emph{boundary layer} of $\Om$ is
\[
 B_\delta(\Om)=\{x\in\Om:\dist(x,\partial\Om)<\delta\}.
\]

\begin{hypothesis}[Smooth-domain input]\label{hyp:smooth}
For every bounded smooth simply connected planar domain $G$, every nonzero real eigenfunction for its first positive Neumann eigenvalue has no interior local maximum or minimum.
\end{hypothesis}

Here \emph{smooth} may be read as $C^\infty$. The hypothesis is applied below only to domains bounded by analytic Jordan curves. The assertion reported in \cite{SmoothInput} is that every first positive Neumann eigenfunction on a bounded smooth simply connected planar domain has no interior critical points, which implies Hypothesis~\ref{hyp:smooth}. The source is a model-generated preprint, accompanied by a Lean statement of the result, in a collection whose review status is listed as unchecked. Neither the proof nor the Lean development is assessed here. The weaker statement that global extrema are attained on the boundary would not suffice for the argument below.

\begin{theorem}\label{thm:main}
Assume Hypothesis~\ref{hyp:smooth}. Let $\Om\subset\R^2$ be a simply connected domain with $|\Om|<\infty$, and suppose that
\begin{equation}\label{eq:CI}
 H^1(\Om)\hookrightarrow L^2(\Om)\quad\text{is compact},
\end{equation}
and that its first positive Neumann eigenvalue $\lambda_1(\Om)$ is simple. Then every real eigenfunction $u$ for $\lambda_1(\Om)$ satisfies
\begin{equation}\label{eq:HS}
 \sup_\Om u=\lim_{\delta\downarrow0}\sup_{B_\delta(\Om)}u,
 \qquad
 \inf_\Om u=\lim_{\delta\downarrow0}\inf_{B_\delta(\Om)}u,
\end{equation}
where infinite extremal values are allowed. If $\Om$ is bounded and $u\in C(\overline\Om)$, then
\begin{equation}\label{eq:boundary}
 \max_{\overline\Om}u=\max_{\partial\Om}u,
 \qquad \min_{\overline\Om}u=\min_{\partial\Om}u.
\end{equation}
\end{theorem}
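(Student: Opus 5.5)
We argue by conformal exhaustion, as outlined in the Introduction. Fix a conformal bijection $F:\D\to\Om$ and set $\Om_r=F(r\D)$ for $0<r<1$. Each $\Om_r$ is a bounded simply connected domain bounded by an analytic Jordan curve, so Hypothesis~\ref{hyp:smooth} applies to it. The proof has three steps.

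\emph{Step 1: spectral convergence.} We show that $\lambda_1(\Om_r)\to\lambda_1(\Om)$ as $r\uparrow1$. For the upper bound we restrict test functions. Take a two-dimensional space spanned by $1$ and $u$, restrict it to $\Om_r$, and use dominated convergence. This gives $\limsup_{r\uparrow1}\lambda_1(\Om_r)\le\lambda_1(\Om)$.

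For the lower bound we extend test functions. Given $w\in H^1(\Om_r)$, define $Ew$ on $\Om$ by $Ew\circ F(z)=(w\circ F)(r^2/\bar z)$ for $r<|z|<\min(1,r^2/\rho)$, where $\rho<r$ is fixed. Beyond that annulus we extend by a further reflection or a cutoff. The Dirichlet integral is conformally invariant, and the reflection $z\mapsto r^2/\bar z$ is anticonformal, so $\int_\Om|\nabla Ew|^2\le2\int_{\Om_r}|\nabla w|^2$. The map $E$ is not $L^2$-controlled in $\Om$, however, because the Jacobian $|F'|^2$ differs on the two sides.

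For that reason we argue by compactness rather than by direct comparison. Let $u_r$ be a normalized eigenfunction on $\Om_r$ with $\int_{\Om_r}u_r=0$, $\|u_r\|_{L^2(\Om_r)}=1$, and energy $\lambda_1(\Om_r)$, which is bounded. The energy of $Eu_r$ is bounded by the energy bound above. Its mean and $L^2$ norm on the fixed set $\Om_{\rho'}$ are controlled by the Poincaré inequality there, so $Eu_r$ is bounded in $H^1(\Om)$.

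By \eqref{eq:CI} we pass to a subsequence with $Eu_r\to v$ in $L^2(\Om)$ and weakly in $H^1(\Om)$. Since $|\Om\setminus\Om_r|\to0$ and $Eu_r$ is uniformly $L^2$-integrable (compactness gives equi-integrability), we get $\|v\|_{L^2}=1$ and $\int_\Om v=0$. Testing the weak equation on $\Om_r$ with $\varphi|_{\Om_r}$ for $\varphi\in H^1(\Om)$ and passing to the limit shows that $v$ is an eigenfunction with eigenvalue $\mu=\lim\lambda_1(\Om_r)\le\lambda_1(\Om)$ and mean zero. Lemma~\ref{lem:anchor} then forces $\mu>0$, hence $\mu=\lambda_1(\Om)$.

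Simplicity now gives $v=\pm u/\|u\|$. After choosing signs, every sequence $r\uparrow1$ has a subsequence along which $u_r\to u/\|u\|$ in $L^2_{\rm loc}$. Interior elliptic estimates, using a uniform $L^2$ bound and bounded eigenvalues, upgrade this to locally uniform convergence on $\Om$.

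\emph{Step 2: contradiction with the hypothesis.} Normalize $\|u\|=1$ and suppose $M:=\sup_\Om u>m:=\lim_{\delta\downarrow0}\sup_{B_\delta(\Om)}u$. The limit exists because the sup over $B_\delta(\Om)$ is monotone in $\delta$. Pick $c$ with $m<c<M$ and $\delta$ with $\sup_{B_\delta}u<c$. Then $K=\{u\ge c\}$ lies in $\Om\setminus B_\delta(\Om)$. The key point is that $K$ is compact even when $\Om$ is unbounded: $|\Om|<\infty$ and $u\in L^2$ give $|\{u\ge c\}|<\infty$, and we must check boundedness. We handle this by choosing the point $x_0$ with $u(x_0)>c$ and taking the connected component $U$ of $\{u>c'\}$ containing $x_0$, with $c<c'<u(x_0)$.

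I expect this to be the main obstacle. We need a compact neighborhood $\overline U\subset\Om$ on whose boundary $u\le c'$ while $u(x_0)>c'$. If $\Om$ is unbounded, a component far out might escape to infinity without approaching $\partial\Om$. I would first try to exclude this using $\dist(\cdot,\partial\Om)\ge\delta$ on $U$, which gives disks of radius $\delta$ inside $\Om$. Finite area then bounds how many disjoint such disks fit, and a chaining argument along $U$ shows that $U$ is bounded.

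With $\overline U$ compact in $\Om$, locally uniform convergence gives, for $r$ near $1$ along the subsequence, $\overline U\subset\Om_r$ and $u_r(x_0)>\max_{\partial U}u_r$. So $u_r$ has an interior local maximum in $U\subset\Om_r$, contradicting Hypothesis~\ref{hyp:smooth}. If the sign flipped along the subsequence, the same argument gives an interior minimum, still a contradiction. The infimum case is symmetric, applying the argument to $-u$.

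\emph{Step 3: the boundary case.} If $\Om$ is bounded and $u\in C(\overline\Om)$, then $\lim_{\delta\downarrow0}\sup_{B_\delta}u=\max_{\partial\Om}u$ by uniform continuity. Together with \eqref{eq:HS} this yields \eqref{eq:boundary}.
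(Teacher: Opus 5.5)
Your proposal is correct and follows the paper's route: conformal exhaustion, the inversion extension $z\mapsto r^2/\bar z$ with at most doubled energy, compactness to extract a normalized mean-zero limit eigenfunction that simplicity identifies with $\pm u$, and a compact superlevel component transferred to the approximants to contradict Hypothesis~\ref{hyp:smooth}. The only difference is that you identify the limit by passing to the limit in the weak equation, where the paper uses weak lower semicontinuity, and this is immaterial.

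Two small corrections are needed. First, the $H^1(\Om)$ bound on $Eu_r$ does not come from a Poincar\'e inequality on $\Om_{\rho'}$. It comes from the anchored inequality \eqref{eq:anchor}, which needs \eqref{eq:CI} and applies once you note that $Eu_r\in L^2(\Om)$, because $u_r$ is bounded on $\overline{\Om_r}$ and $|\Om|<\infty$. Second, your ``main obstacle'' needs no chaining: the disjoint-disk argument shows directly that the whole set $\{x\in\Om:\dist(x,\partial\Om)\ge\delta\}$ is closed and bounded, hence compact, and it contains $\overline U$.
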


\begin{remark}[Scope of the conclusion]\label{rem:scopeMain}
No regularity of $\partial\Om$, no boundary continuity of the conformal map, and no boundedness of $u$ is assumed in \eqref{eq:HS}. The conclusion is nonstrict. It does not exclude interior points sharing an extremal value. On an unbounded domain, continuity of $u$ on $\overline\Om$ does not ensure that an extremum is attained, and \eqref{eq:HS} asserts no attainment at a boundary point. If $\Om$ is unbounded and thin at infinity, the layer $B_\delta(\Om)$ contains a whole end of $\Om$ for every $\delta>0$, so one of the identities in \eqref{eq:HS} holds for trivial reasons whenever the corresponding extremum is approached along that end. See Remark~\ref{rem:hornVacuous} for the Gaussian horn.
\end{remark}

\subsection{Compactness and the extension operator}
Fix a conformal bijection $F:\D\to\Om$, where $\D=\{z\in\mathbb C:|z|<1\}$, and put
\[
 \Om_r=F(r\D)\qquad(0<r<1).
\]
Each $\Om_r$ is bounded by an analytic Jordan curve, $\Om_r\Subset\Om$, and $\Om_r\uparrow\Om$ as $r\uparrow1$. Every compact subset of $\Om$ lies in $\Om_r$ for all sufficiently large $r$, and $\overline{\Om_{1/2}}\subset\Om_r$ whenever $r>1/2$. The domain $\Om_{1/2}$ serves as an anchor in the Poincar\'e inequality below.

\begin{lemma}[Anchored Poincar\'e inequality]\label{lem:anchor}
Assume $|\Om|<\infty$ and \eqref{eq:CI}. Then there is $C_1<\infty$ such that
\begin{equation}\label{eq:anchor}
 \norm{w}{L^2(\Om)}^2\le C_1\left(
 \norm{\nabla w}{L^2(\Om)}^2+\norm{w}{L^2(\Om_{1/2})}^2\right)
 \qquad(w\in H^1(\Om)).
\end{equation}
Moreover,
\begin{equation}\label{eq:variational}
 \lambda_1(\Om)=\min\left\{\int_\Om|\nabla w|^2:
 w\in H^1(\Om),\ \int_\Om w=0,\ \int_\Om w^2=1\right\}>0.
\end{equation}
\end{lemma}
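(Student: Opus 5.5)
I would prove \eqref{eq:anchor} by the standard compactness--contradiction argument. Suppose no such $C_1$ exists. Then there are $w_n\in H^1(\Om)$, normalized so that $\norm{w_n}{L^2(\Om)}=1$, with
\[
\norm{\nabla w_n}{L^2(\Om)}^2+\norm{w_n}{L^2(\Om_{1/2})}^2<1/n.
\]
This sequence is bounded in $H^1(\Om)$. By \eqref{eq:CI}, after passing to a subsequence, $w_n\to w$ in $L^2(\Om)$. Since $\nabla w_n\to0$ in $L^2$, the sequence is Cauchy in $H^1(\Om)$. Hence $w_n\to w$ in $H^1(\Om)$ with $\nabla w=0$, and $\norm{w}{L^2(\Om)}=1$.

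Because $\Om$ is connected, a weak gradient that vanishes identically forces $w$ to be a.e.\ equal to a constant $c$. To see this, note that mollifications are locally constant on each compact subdomain; alternatively, $w$ is constant on every ball in $\Om$, and a chain-of-balls argument finishes it. On the other hand, $\norm{w}{L^2(\Om_{1/2})}=\lim\norm{w_n}{L^2(\Om_{1/2})}=0$. Since $\Om_{1/2}$ is a nonempty open set of positive area, this gives $c=0$, contradicting $\norm{w}{L^2(\Om)}=1$. Finite area is not used in this step; it enters only because constants must lie in $L^2$ in what follows.

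For \eqref{eq:variational}, I would first show that the infimum
\[
\mu=\inf\left\{\int_\Om|\nabla w|^2:\ \int_\Om w=0,\ \int_\Om w^2=1\right\}
\]
is attained and is positive. The constraint $\int_\Om w=0$ is meaningful because $|\Om|<\infty$ gives $L^2\subset L^1$. Take a minimizing sequence. It is bounded in $H^1(\Om)$, so compactness gives a subsequence converging in $L^2$ and weakly in $H^1$. Both constraints pass to the limit, since $w\mapsto\int w$ is $L^2$-continuous on a finite-area domain, and the Dirichlet energy is weakly lower semicontinuous; hence the limit is a minimizer. If $\mu=0$, the minimizer has zero gradient, so it is constant by connectedness. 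A constant with mean zero is $0$, which contradicts $\int w^2=1$. Hence $\mu>0$.

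It remains to identify $\mu$ with $\lambda_1(\Om)$ as defined by min--max. For $\lambda_1\ge\mu$: any two-dimensional $W\subset H^1(\Om)$ contains a nonzero $w$ with $\int_\Om w=0$, since this is one linear condition. That $w$ has Rayleigh quotient at least $\mu$, so $\sup_W\ge\mu$. For $\lambda_1\le\mu$: let $W$ be the span of $1$ and the minimizer $w_*$. Since $\int_\Om w_*=0$, the two are orthogonal in $L^2(\Om)$. For $v=a+bw_*$,
\[
\frac{\int_\Om|\nabla v|^2}{\int_\Om v^2}=\frac{b^2\mu}{a^2|\Om|+b^2}\le\mu .
\]
Hence $\lambda_1\le\mu$, and the minimum in \eqref{eq:variational} equals $\lambda_1(\Om)>0$.

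No step is a serious obstacle. The only points needing care are that constancy of zero-gradient $H^1$ functions relies on connectedness, and that the mean-zero constraint needs $|\Om|<\infty$. Under that condition the constraint is a closed condition, and constants belong to $H^1(\Om)$.
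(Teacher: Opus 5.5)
Your proof is correct and follows the paper's route. For \eqref{eq:anchor} you use the same compactness--contradiction argument, and for \eqref{eq:variational} you use the same direct method (compactness plus weak lower semicontinuity) to produce the constrained minimizer.

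The only variation is the identification step. You compare the minimum with the min--max value $\lambda_1(\Om)$ directly through two-dimensional subspaces, and you obtain positivity from the minimizer itself. The paper instead derives the Euler--Lagrange equation for mean-zero variations and extends it to all $h\in H^1(\Om)$ by subtracting $|\Om|^{-1}\int_\Om h$. That route also shows that every minimizer is a weak Neumann eigenfunction, a fact reused in the proof of Proposition~\ref{prop:spectral}.
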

\begin{proof}
Suppose \eqref{eq:anchor} fails. Then there are $w_j\in H^1(\Om)$ with $\norm{w_j}{L^2(\Om)}=1$ and $\norm{\nabla w_j}{L^2(\Om)}+\norm{w_j}{L^2(\Om_{1/2})}\to0$. By compactness, a subsequence converges strongly in $L^2(\Om)$ to some $w$ with $\norm{w}{L^2(\Om)}=1$. Its weak gradient vanishes, so $w$ is constant because $\Om$ is connected, while $w=0$ on $\Om_{1/2}$. Hence $w=0$, a contradiction. This part uses only connectedness and compactness.

Finite area enters in \eqref{eq:variational}: constants belong to $L^2(\Om)$, the mean $\int_\Om w$ is defined for $w\in L^2(\Om)$ by the Cauchy--Schwarz inequality, and $|\Om|^{-1}\int_\Om h$ is finite for $h\in H^1(\Om)$. The argument of the first paragraph, with the constraint $\int_\Om w=0$ in place of the anchor, gives a positive Poincar\'e constant. The direct method, which uses compactness of the embedding and weak lower semicontinuity of the Dirichlet integral, then yields a minimizer in \eqref{eq:variational}. The Euler--Lagrange equation holds for mean-zero variations. Applying it to $h-|\Om|^{-1}\int_\Om h$ for an arbitrary test function $h\in H^1(\Om)$ gives the full weak Neumann equation, with $\lambda_1(\Om)$ the minimal value.
\end{proof}

\begin{lemma}[Inversion extension]\label{lem:extension}
Let $|\Om|<\infty$. For $0<r<1$ and $v\in C^\infty(\overline{\Om_r})$, there is a bounded function $\Phi_rv\in H^1(\Om)$, equal to $v$ on $\Om_r$, such that
\begin{equation}\label{eq:energy}
 \int_\Om|\nabla \Phi_rv|^2\le2\int_{\Om_r}|\nabla v|^2.
\end{equation}
This statement does not use \eqref{eq:CI}. If \eqref{eq:CI} holds and $r>1/2$, then, with a constant $C$ independent of $r$ and $v$,
\begin{equation}\label{eq:extensionH}
 \norm{\Phi_rv}{H^1(\Om)}^2\le C\left(
 \int_{\Om_r}|\nabla v|^2+\int_{\Om_{1/2}}|v|^2\right).
\end{equation}
\end{lemma}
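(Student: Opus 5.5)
We transport the problem to the parameter disk and reflect there. Put $g=v\circ F$ on $\overline{r\D}$; it is smooth up to $|z|=r$ because $F$ is analytic on a neighborhood of $\overline{r\D}$. The Dirichlet integral is conformally invariant, so $\int_{\Om_r}|\nabla v|^2=\int_{r\D}|\nabla g|^2$. On $\D$ we define
\[
 \tilde g(z)=\begin{cases} g(z), & |z|<r,\\ g(r^2/\bar z), & r\le|z|<1,\end{cases}
\]
and set $\Phi_rv=\tilde g\circ F^{-1}$.

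The map $z\mapsto r^2/\bar z$ is anticonformal and sends the annulus $\{r<|z|<1\}$ into the annulus $\{r^2<|z|<r\}\subset r\D$. Hence
\[
 \int_{r<|z|<1}|\nabla\tilde g|^2=\int_{r^2<|z|<r}|\nabla g|^2\le\int_{r\D}|\nabla g|^2 .
\]
The function $\tilde g$ is continuous across $|z|=r$ and piecewise smooth, so it is locally Lipschitz in $\D$ and lies in $W^{1,2}_{\rm loc}(\D)$ with no jump term. Its total energy is therefore at most $2\int_{r\D}|\nabla g|^2$. Because $\tilde g$ takes values in $g(\overline{r\D})$, it is bounded, say $|\tilde g|\le\max_{\overline{\Om_r}}|v|$. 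Pulling back by the conformal map $F$ preserves both energy and the sup bound. Thus $\Phi_rv$ is bounded, has gradient in $L^2(\Om)$, and is square integrable since $|\Om|<\infty$. This gives $\Phi_rv\in H^1(\Om)$ with \eqref{eq:energy}, and none of it uses \eqref{eq:CI}.

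The only technical point in this part is that $\tilde g\circ F^{-1}$ has the expected weak derivatives on $\Om$. This holds because locally Lipschitz functions compose with diffeomorphisms, and $F^{-1}$ is a diffeomorphism, with the chain rule applied a.e.

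For \eqref{eq:extensionH} with $r>1/2$, we apply Lemma~\ref{lem:anchor} to $w=\Phi_rv$. Since $\Om_{1/2}\subset\Om_r$, we have $w=v$ on $\Om_{1/2}$, and therefore
\[
 \norm{w}{L^2(\Om)}^2\le C_1\Bigl(2\int_{\Om_r}|\nabla v|^2+\int_{\Om_{1/2}}v^2\Bigr).
\]
Adding the gradient bound \eqref{eq:energy} gives the claim with $C=2C_1+2$, independent of $r$ and $v$.

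The main obstacle is only the uniformity in $r$. A direct $L^2$ estimate of the reflected part would involve the Jacobian of $F$ near $\partial\Om$, which is uncontrolled. The anchored Poincaré inequality avoids this entirely, because it controls the full $L^2(\Om)$ norm using only gradient energy and the fixed anchor set $\Om_{1/2}$.
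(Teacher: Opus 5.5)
Your proposal is correct and follows the paper's proof: the reflection $z\mapsto r^2/\bar z$ in the parameter disk, conformal invariance of the Dirichlet integral to get \eqref{eq:energy}, boundedness together with $|\Om|<\infty$ to get $L^2$, and the anchored Poincar\'e inequality of Lemma~\ref{lem:anchor} to get \eqref{eq:extensionH} uniformly in $r$. Your constant $2C_1+2$ coincides with the paper's $2(1+C_1)$.
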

\begin{proof}
Write $g=v\circ F$ on $\overline{r\D}$. It is smooth there because $F$ is holomorphic with nonvanishing derivative in a neighborhood of $\overline{r\D}$. Define
\[
 \widetilde g(z)=
 \begin{cases}
 g(z),&|z|\le r,\\
 g(I_r(z)),&r<|z|<1,
 \end{cases}
 \qquad I_r(z)=\dfrac{r^2}{\overline z}.
\]
The two definitions agree on $|z|=r$. The map $I_r$ is anticonformal, maps $\{r<|z|<1\}$ bijectively onto $\{r^2<|z|<r\}$, and scales lengths at $z$ by the factor $r^2/|z|^2\le1$. Hence $\widetilde g$ is continuous on $\D$ and piecewise smooth with bounded gradient, so $\widetilde g\in W^{1,\infty}(\D)\subset H^1(\D)$. No singular part of the gradient arises on the circle $|z|=r$. Since the planar Dirichlet integral is invariant under conformal and anticonformal maps,
\[
 \int_{\D\setminus r\D}|\nabla\widetilde g|^2
 =\int_{\{r^2<|z|<r\}}|\nabla g|^2
 \le\int_{r\D}|\nabla g|^2.
\]
Set $\Phi_rv=\widetilde g\circ F^{-1}$. By the chain rule this function belongs to $H^1_{\mathrm{loc}}(\Om)$, and a further change of variables gives \eqref{eq:energy}. It is bounded by $\norm{v}{L^\infty(\Om_r)}$, hence lies in $L^2(\Om)$ since $|\Om|<\infty$. Its weak derivatives have finite global $L^2$ norm by \eqref{eq:energy}, so $\Phi_rv\in H^1(\Om)$.

Now assume \eqref{eq:CI} and $r>1/2$. Applying \eqref{eq:anchor} to $\Phi_rv$, and using $\Phi_rv=v$ on $\Om_{1/2}\subset\Om_r$ together with \eqref{eq:energy}, gives
\[
 \norm{\Phi_rv}{H^1(\Om)}^2\le2(1+C_1)\int_{\Om_r}|\nabla v|^2+C_1\int_{\Om_{1/2}}|v|^2,
\]
which is \eqref{eq:extensionH}. By density on the smooth domain $\Om_r$, this linear extension map extends continuously to all of $H^1(\Om_r)$ when \eqref{eq:CI} holds and $r>1/2$. The extended map still agrees with the original function on $\Om_r$.
\end{proof}

\subsection{Spectral convergence under compact embedding}
A first positive eigenfunction $u$ on a domain of finite area is \emph{normalized} if $\int u=0$ and $\int u^2=1$. The first condition is automatic, since eigenfunctions are orthogonal to constants.

\begin{proposition}\label{prop:spectral}
Let $\Om$ be simply connected with $|\Om|<\infty$ and satisfy \eqref{eq:CI}. Then
\begin{equation}\label{eq:eigenconv}
 \lambda_1(\Om_r)\longrightarrow\lambda_1(\Om)\qquad(r\uparrow1).
\end{equation}
For every sequence $r_j\uparrow1$ and every choice of normalized real first positive Neumann eigenfunctions $u_j$ on $\Om_{r_j}$, there are a subsequence and a normalized first positive Neumann eigenfunction $v$ on $\Om$ such that $u_j\to v$ in $C^\infty_{\mathrm{loc}}(\Om)$ along the subsequence. If $\lambda_1(\Om)$ is simple, the signs of the $u_j$ can be chosen so that the limit is any prescribed normalized eigenfunction $u$.
\end{proposition}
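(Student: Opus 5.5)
We prove the upper and lower eigenvalue bounds separately, then pass eigenfunctions to the limit by compactness.

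\emph{Upper bound.} Let $u$ be a normalized eigenfunction for $\lambda_1(\Om)$ and let $\chi$ be a smooth cutoff. The restrictions $w_r=u|_{\Om_r}-\frac{1}{|\Om_r|}\int_{\Om_r}u$ are admissible test functions for \eqref{eq:variational} on $\Om_r$; note that $\Om_r$ is a smooth bounded domain, so \eqref{eq:CI} holds there. Since $\Om_r\uparrow\Om$ and $|\Om|<\infty$, dominated convergence gives
\[
\int_{\Om_r}|\nabla u|^2\to\lambda_1(\Om),\qquad \int_{\Om_r}w_r^2\to1 .
\]
Hence $\limsup_{r\uparrow1}\lambda_1(\Om_r)\le\lambda_1(\Om)$.

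\emph{Lower bound and compactness.} Take $r_j\uparrow1$, with $r_j>1/2$, and normalized eigenfunctions $u_j$ on $\Om_{r_j}$. Their Dirichlet energies $\lambda_1(\Om_{r_j})$ are bounded by the upper bound. Put $U_j=\Phi_{r_j}u_j$ using the continuous extension of Lemma~\ref{lem:extension}. By \eqref{eq:extensionH},
\[
\|U_j\|_{H^1(\Om)}^2\le C\bigl(\lambda_1(\Om_{r_j})+1\bigr),
\]
since $\int_{\Om_{1/2}}u_j^2\le1$. By \eqref{eq:CI}, a subsequence converges strongly in $L^2(\Om)$ and weakly in $H^1(\Om)$ to some $v$.

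The key point is to show $\int_\Om v^2=1$ and $\int_\Om v=0$, i.e.\ that no mass is lost in $\Om\setminus\Om_{r_j}$. Strong convergence gives $\|U_j\|_{L^2(\Om)}\to\|v\|_{L^2(\Om)}$. It remains to check that $\int_{\Om\setminus\Om_{r_j}}U_j^2\to0$. For this, write
\[
\int_{\Om\setminus\Om_{r_j}}U_j^2\le 2\int_{\Om\setminus\Om_{r_j}}|U_j-v|^2+2\int_{\Om\setminus\Om_{r_j}}v^2 .
\]
Both terms tend to $0$: the first by strong convergence, the second by dominated convergence since $|\Om\setminus\Om_{r_j}|\to0$ with $v\in L^2$ fixed. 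Consequently $\int_{\Om_{r_j}}u_j^2=1$ passes to $\int_\Om v^2=1$. The same estimate, combined with Cauchy--Schwarz on the finite-area sets, gives $\int_\Om v=\lim\int_{\Om_{r_j}}u_j=0$.

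Weak lower semicontinuity, together with the energy bound \eqref{eq:energy} applied to the tails, gives
\[
\int_\Om|\nabla v|^2\le\liminf_j \int_\Om |\nabla U_j|^2 .
\]
This route, however, carries a factor $2$ on the tails. To avoid it, apply lower semicontinuity on each fixed $\Om_s$ (where $U_j=u_j$ for large $j$) and then let $s\uparrow1$. This yields
\[
\int_\Om|\nabla v|^2\le\liminf_j\lambda_1(\Om_{r_j}),
\]
and therefore, by \eqref{eq:variational},
\[
\lambda_1(\Om)\le\liminf_j\lambda_1(\Om_{r_j}).
\]
This proves \eqref{eq:eigenconv} along every subsequence, hence in full. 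Moreover, $v$ attains the minimum in \eqref{eq:variational}, so by the Euler--Lagrange argument of Lemma~\ref{lem:anchor} it is a first positive eigenfunction.

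\emph{Local smooth convergence.} Fix a compact $K\subset\Om$ and choose an open $V$ with $K\Subset V\Subset\Om$. For large $j$ we have $V\subset\Om_{r_j}$, and there $\Delta u_j=-\lambda_1(\Om_{r_j})u_j$ with $\|u_j\|_{L^2(V)}\le1$ and bounded eigenvalues. Interior elliptic estimates, obtained by iterating local $H^k$ bounds and Sobolev embedding, bound every $C^k(K)$ norm uniformly in $j$. Arzel\`a--Ascoli, together with the $L^2$ convergence to $v$, which identifies the limit, gives $u_j\to v$ in $C^\infty_{\mathrm{loc}}(\Om)$ along the subsequence.

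\emph{Simple case.} If $\lambda_1(\Om)$ is simple, any limit equals $\pm u$. Choose signs so that $\int_{\Om_{1/2}}u_ju\ge0$. Every subsequential limit $v$ then satisfies $\int_{\Om_{1/2}}vu\ge0$. Since $u$ is a nonzero analytic function, it does not vanish identically on $\Om_{1/2}$, so $\int_{\Om_{1/2}}u^2>0$; this forces $v=u$, and the whole sequence converges.

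The main obstacle is the mass-preservation step, i.e.\ excluding concentration of $L^2$ mass near $\partial\Om$. This is exactly where the uniform extension bound \eqref{eq:extensionH} and the compactness assumption \eqref{eq:CI} are used.
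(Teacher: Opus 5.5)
Your proposal is correct and follows essentially the same route as the paper. The shared steps are: mean-corrected restrictions of $u$ for the upper bound, the extensions $\Phi_{r_j}u_j$ bounded in $H^1(\Om)$ via the anchored estimate, the same no-mass-loss estimate on $\Om\setminus\Om_{r_j}$, lower semicontinuity on fixed $\Om_s$ to avoid the factor $2$, and interior elliptic estimates. Your explicit sign convention $\int_{\Om_{1/2}}u_ju\ge0$, combined with $\int_{\Om_{1/2}}u^2>0$ by analyticity, is a slightly sharper form of the paper's ``after changing signs'' and gives convergence of the whole sequence; the cutoff $\chi$ you introduce is never used.
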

\begin{proof}
Put $\lambda=\lambda_1(\Om)$ and $\lambda_r=\lambda_1(\Om_r)$, and let $u$ be a normalized first positive eigenfunction on $\Om$.

\emph{Upper bound.} Set $c_r=|\Om_r|^{-1}\int_{\Om_r}u$. Since $u\in L^2(\Om)\subset L^1(\Om)$ by finite area, $\int_{\Om_r}u\to\int_\Om u=0$ and $|\Om_r|\uparrow|\Om|$ as $r\uparrow1$. Hence $c_r\to0$, and by monotone convergence
\[
 \int_{\Om_r}(u-c_r)^2\to1,\qquad
 \int_{\Om_r}|\nabla u|^2\to\lambda.
\]
Since $u-c_r$ has mean zero on $\Om_r$ and is nonzero for large $r$, it is an admissible trial function on $\Om_r$, and the variational principle yields
\begin{equation}\label{eq:limsup}
 \limsup_{r\uparrow1}\lambda_r\le\lambda.
\end{equation}

\emph{Compact extensions.} Choose $r_j\uparrow1$ with $r_j>1/2$ and normalized first positive eigenfunctions $u_j$ on $\Om_{r_j}$, so that
\[
 \int_{\Om_{r_j}}u_j=0,\qquad
 \int_{\Om_{r_j}}u_j^2=1,\qquad
 \int_{\Om_{r_j}}|\nabla u_j|^2=\lambda_{r_j}.
\]
Elliptic regularity for the smooth Neumann problem gives $u_j\in C^\infty(\overline{\Om_{r_j}})$. By Lemma~\ref{lem:extension} and \eqref{eq:limsup}, the functions $U_j=\Phi_{r_j}u_j$ are bounded in $H^1(\Om)$. Along a subsequence,
\[
 U_j\rightharpoonup v\ \text{ weakly in }H^1(\Om),\qquad
 U_j\to v\ \text{ in }L^2(\Om).
\]
No mass is lost in the omitted region, since
\[
 \int_{\Om\setminus\Om_{r_j}}|U_j|^2
 \le2\norm{U_j-v}{L^2(\Om)}^2
       +2\int_{\Om\setminus\Om_{r_j}}|v|^2\longrightarrow0,
\]
and $\|U_j\|_{L^2(\Om)}^2=1+\int_{\Om\setminus\Om_{r_j}}|U_j|^2$. Hence $\int_\Om v^2=1$. Moreover, $\int_\Om U_j=\int_{\Om\setminus\Om_{r_j}}U_j\to0$ by the Cauchy--Schwarz inequality, because $|\Om\setminus\Om_{r_j}|\to0$, while $\int_\Om U_j\to\int_\Om v$ since $L^2(\Om)\subset L^1(\Om)$. So $\int_\Om v=0$.

\emph{Lower bound and identification.} Fix $s<1$. For large $j$ we have $U_j=u_j$ on $\Om_s$, so weak lower semicontinuity gives
\[
 \int_{\Om_s}|\nabla v|^2
 \le\liminf_j\int_{\Om_s}|\nabla U_j|^2
 \le\liminf_j\lambda_{r_j}.
\]
Letting $s\uparrow1$ and using \eqref{eq:variational},
\[
 \lambda\le\int_\Om|\nabla v|^2\le\liminf_j\lambda_{r_j}.
\]
Applying this to a sequence realizing $\liminf_{r\uparrow1}\lambda_r$, and combining with \eqref{eq:limsup}, proves \eqref{eq:eigenconv}. For an arbitrary sequence the same argument shows that $v$ is a normalized minimizer in \eqref{eq:variational}, hence
\[
 \int_\Om\nabla v\cdot\nabla h=\lambda\int_\Om vh
 \qquad(h\in H^1(\Om)).
\]

\emph{Local smooth convergence.} On every relatively compact subdomain, $-\Delta u_j=\lambda_{r_j}u_j$ holds for all large $j$, with bounded local $L^2$ norms and bounded eigenvalues. Interior elliptic estimates of every order, Sobolev embedding, and a diagonal subsequence give convergence in $C^\infty_{\mathrm{loc}}(\Om)$. The limit coincides with the $L^2$ limit $v$. If $\lambda$ is simple, then $v=\pm u$, and after changing signs of the $u_j$ the limit is $u$.
\end{proof}

\subsection{Passage to the hot-spots conclusion}\label{subsec:passage}
The following lemma isolates the step that converts a failure of \eqref{eq:HS} into an interior local maximum of an approximating function. It is used again in Sections~\ref{sec:HK} and~\ref{sec:withoutSimplicity}.

\begin{lemma}[Transfer of a compact superlevel set]\label{lem:superlevel}
Let $\Om\subset\R^2$ be a domain with $|\Om|<\infty$, let $u\in C(\Om)$ be real, and suppose that
\[
 \sup_\Om u>\lim_{\delta\downarrow0}\sup_{B_\delta(\Om)}u .
\]
Let $\Om_j\subset\Om$ be open sets such that every compact subset of $\Om$ lies in $\Om_j$ for all large $j$, and let $u_j\in C(\Om_j)$ converge to $u$ locally uniformly in $\Om$. Then, for all sufficiently large $j$, $u_j$ has a local maximum at some point of $\Om_j$.
\end{lemma}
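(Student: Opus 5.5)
Let $M=\sup_\Om u\in(-\infty,\infty]$ and $m=\lim_{\delta\downarrow0}\sup_{B_\delta(\Om)}u$. The limit exists because $\delta\mapsto\sup_{B_\delta(\Om)}u$ is nondecreasing, and by hypothesis $m<M$; in particular $m<\infty$. The plan is to locate a compact set $K\subset\Om$ on whose complement $u$ is small, and on which $u$ exceeds that bound at some point. Then the same separation holds for $u_j$ on a thin neighbourhood of $\partial K$ for large $j$, and the maximum of $u_j$ over $K$ is attained in the interior of $K$.

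\emph{Choice of levels and the compact set.} First choose a real number $a$ with $m<a<M$ and a point $x_0\in\Om$ with $u(x_0)>a$. Next choose $\delta>0$ so small that $\sup_{B_{2\delta}(\Om)}u<a$ and $\dist(x_0,\partial\Om)>2\delta$. If $\Om$ is bounded, take
\[
K=\{x\in\Om:\dist(x,\partial\Om)\ge\delta\},
\]
which is compact and contains $x_0$. If $\Om$ is unbounded, this set need not be compact, and this is the main obstacle. Finite area helps here. The set $\{x\in\Om:\dist(x,\partial\Om)\ge\delta\}$ is a union of disks of radius $\delta$ contained in $\Om$; disjoint such disks each carry area $\pi\delta^2$, so the set is bounded. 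Indeed, if it contained points $x_n\to\infty$, we could pass to a subsequence with pairwise distances $>2\delta$, giving infinitely many disjoint disks $D(x_n,\delta)\subset\Om$, which contradicts $|\Om|<\infty$. The set is also closed in $\R^2$, because points at distance $\ge\delta$ from $\partial\Om$ that are limits of points of $\Om$ lie in $\Om$. Hence $K$ is compact in every case.

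\emph{Transfer.} Now set
\[
L=\{x\in\Om:\delta\le\dist(x,\partial\Om)\le 2\delta\}\subset K.
\]
This set is compact, and $u<a$ on it, so by continuity and compactness there is $\eta>0$ with $u\le a-\eta$ on $L$. Also put $\eta'=u(x_0)-a>0$. For large $j$ we have $K\subset\Om_j$, and uniform convergence on $K$ gives $\sup_K|u_j-u|<\min(\eta,\eta')/2$. Then $u_j<a$ on $L$ and $u_j(x_0)>a$. Let $y_j$ be a maximizer of $u_j$ on the compact set $K$. Since $u_j(y_j)\ge u_j(x_0)>a$, the point $y_j$ lies outside $L$, so $\dist(y_j,\partial\Om)>2\delta$. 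The open set $U=\{x:\dist(x,\partial\Om)>2\delta\}\cap\Om$ is contained in $K$ and is a neighbourhood of $y_j$, and $u_j\le u_j(y_j)$ on $U$. Thus $y_j$ is a local maximum of $u_j$ at a point of $\Om_j$.

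The only delicate step is the compactness of $K$ for unbounded $\Om$, and that is exactly where $|\Om|<\infty$ is used. The rest is routine.
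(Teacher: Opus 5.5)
Your proof is correct. It uses the same finite-area compactness of $K_\delta=\{x\in\Om:\dist(x,\partial\Om)\ge\delta\}$ as the paper, proved the same way with disjoint disks, but it forces the interior maximum by a different mechanism.

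The paper chooses two levels $m<a<t<M$ (in your notation) and takes the connected component $V$ of $\{u>t\}$ containing $x_0$. It shows $\overline V\subset K_\delta$ and $u=t$ on $\partial V$, and concludes that for large $j$ the maximum of $u_j$ over $\overline V$ is attained in the open set $V$.

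You instead use the collar $L=\{\delta\le\dist(\cdot,\partial\Om)\le2\delta\}$ as a barrier. Since $u$ is uniformly below $a$ there, the maximizer of $u_j$ over all of $K$ is pushed into the open region $\{\dist(\cdot,\partial\Om)>2\delta\}$, which is a neighbourhood of that point contained in $K$. Your route needs only one level and avoids any analysis of components of superlevel sets or of $\partial V$. The paper's route, in exchange, places the local maximum of $u_j$ in the particular component of $\{u>t\}$ through $x_0$, which matches the ``compact superlevel set'' framing of the lemma.

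One small step should be justified. $L$ contains points with $\dist(x,\partial\Om)=2\delta$, and these do not lie in $B_{2\delta}(\Om)$. So ``$u<a$ on $L$'' does not follow verbatim from $\sup_{B_{2\delta}(\Om)}u<a$. It does follow by continuity. Move from such an $x$ toward a nearest boundary point $p$: the points of the segment lie in the disk of radius $2\delta$ about $x$, which is contained in $\Om$, and they are at distance less than $2\delta$ from $\partial\Om$. This gives $u\le\sup_{B_{2\delta}(\Om)}u<a$ on $L$. Alternatively, choose $\delta$ with $\sup_{B_{3\delta}(\Om)}u<a$ from the start.
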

\begin{proof}
Write $L=\lim_{\delta\downarrow0}\sup_{B_\delta(\Om)}u$ and $M=\sup_\Om u$. The limit exists in the extended real numbers because $B_\delta(\Om)$ decreases as $\delta$ decreases, and $L<M$ by assumption. Choose finite numbers $a,t$ with $L<a<t<M$, and $\delta>0$ with $u\le a$ on $B_\delta(\Om)$. Choose $x_0\in\Om$ with $u(x_0)>t$, and let $V$ be the connected component of $\{u>t\}$ containing $x_0$. Every point of $V$ satisfies $u>a$, hence lies outside $B_\delta(\Om)$, so $V\subset K_\delta$, where
\[
 K_\delta=\{x\in\Om:\dist(x,\partial\Om)\ge\delta\}.
\]
The set $K_\delta$ is compact. It is closed in $\R^2$. Indeed, a limit of points of $K_\delta$ is at distance at least $\delta$ from $\partial\Om$ and lies in $\overline\Om$. Since $\overline\Om\setminus\Om=\partial\Om$, it lies in $\Om$ and hence in $K_\delta$. It is also bounded. If it were unbounded, it would contain a sequence of points with mutual distances at least $\delta$, and the disks of radius $\delta/2$ about these points would be pairwise disjoint and contained in $\Om$, contradicting $|\Om|<\infty$. Hence $\overline V\subset K_\delta$ is a compact subset of $\Om$.

We claim that $u=t$ on $\partial V$. By continuity, $u\ge t$ on $\partial V$. If $u(y)>t$ at some $y\in\partial V$, a small ball around $y$ would lie in $\{u>t\}$ and meet $V$, hence lie in $V$. This contradicts $y\in\partial V$ because $V$ is open.

Put $\eta=u(x_0)-t>0$. For all large $j$ we have $\overline V\subset\Om_j$ and, by local uniform convergence, $\sup_{\overline V}|u_j-u|<\eta/3$. Consequently
\[
 u_j(x_0)>t+2\eta/3,\qquad
 \max_{\partial V}u_j<t+\eta/3.
\]
The maximum of $u_j$ on the compact set $\overline V$ is therefore attained at a point of the open set $V\subset\Om_j$, and that point is a local maximum of $u_j$ in $\Om_j$.
\end{proof}

\begin{proof}[Proof of Theorem~\ref{thm:main}]
Eigenfunctions are nonzero by definition, so we may take $u$ normalized. Proposition~\ref{prop:spectral} supplies $r_j\uparrow1$ and first positive eigenfunctions $u_j$ on $\Om_{r_j}$ with $u_j\to u$ locally uniformly in $\Om$. If the first identity in \eqref{eq:HS} failed, Lemma~\ref{lem:superlevel}, applied with $\Om_j=\Om_{r_j}$, would give an interior local maximum of a first positive Neumann eigenfunction on the bounded smooth simply connected domain $\Om_{r_j}$, contradicting Hypothesis~\ref{hyp:smooth}. Applying the same argument to $-u$ proves the identity for the infimum.

Now let $\Om$ be bounded and $u\in C(\overline\Om)$. By uniform continuity on $\overline\Om$, $\limsup_{\delta\downarrow0}\sup_{B_\delta(\Om)}u\le\max_{\partial\Om}u$: each point of $B_\delta(\Om)$ lies within $\delta$ of a boundary point. Conversely, a maximizing boundary point can be approached from inside $\Om$, which gives the opposite inequality. Together with $\sup_\Om u=\max_{\overline\Om}u$, this yields the first identity in \eqref{eq:boundary}. The second is analogous.
\end{proof}

\section{Heat-kernel bounds along the conformal exhaustion}\label{sec:HK}

Write $p_\Om(t,x,y)$ for the Neumann heat kernel of $\Om$, with heat equation $\partial_tu=\Delta u$. A \emph{polynomial heat-kernel bound} is a spatially uniform small-time estimate
\begin{equation}\label{eq:HK}
 0\le p_\Om(t,x,y)\le C_0t^{-N/2}\qquad(0<t\le1)
\end{equation}
for almost every $x,y\in\Om$ and some constants $N>2$ and $C_0<\infty$. Here $N$ is an exponent parameter, used in the Sobolev formulas below and in the stability theorems of \cite{BP}. The ambient dimension remains two. Bounded smooth and Lipschitz planar domains satisfy the sharp planar estimate $Ct^{-1}$, that is, \eqref{eq:HK} with $N=2$. Since $t^{-1}\le t^{-N/2}$ for $0<t\le1$, they satisfy \eqref{eq:HK} for every $N>2$ as well, and it is in this weakened form that the results of this section apply to them. Simple connectivity alone does not ensure any polynomial bound. Power cusps require exponents $N>2$ (Section~\ref{sec:examples}), and the exponential cusp of Section~\ref{sec:explicitCusp} has no polynomial bound at all.

\subsection{Uniform bounds along the conformal exhaustion}
The transfer result in this subsection is independent of Hypothesis~\ref{hyp:smooth} and of simplicity of the eigenvalue. It supplies the uniform estimates that allow the use of the stability theorems in \cite{BP}.

We use the following standard fact (Sobolev--ultracontractivity). Let $T_t$ be a symmetric sub-Markov semigroup associated with a nonnegative closed form $\mathcal E$ with form domain $\mathcal D(\mathcal E)$, and let $N>2$. If $\norm{T_t}{L^1\to L^\infty}\le Ct^{-N/2}$ for $0<t\le1$, then
\begin{equation}\label{eq:standardSob}
 \norm{w}{L^q}^2\le S\bigl(\mathcal E(w,w)+\norm{w}{L^2}^2\bigr),
 \qquad q=\frac{2N}{N-2},\qquad w\in\mathcal D(\mathcal E),
\end{equation}
with the constant $S$ depending only on $C$ and $N$. See \cite[Theorem~2.4.2]{Davies}. That theorem is stated for bounds valid for all $t>0$. To apply it, replace $T_t$ by $e^{-t}T_t$, whose form is $\mathcal E+\norm{\cdot}{L^2}^2$. For $t\ge1$ the semigroup property and the $L^1$ contraction property give $\norm{e^{-t}T_t}{L^1\to L^\infty}\le e^{-t}\norm{T_1}{L^1\to L^\infty}\le C'\,t^{-N/2}$. The converse implication needed below is proved explicitly by the Nash argument.

\begin{proposition}\label{prop:HK}
Let $\Om\subset\R^2$ be bounded and simply connected, and suppose that its Neumann heat kernel satisfies \eqref{eq:HK} for some constants $N>2$ and $C_0<\infty$. Let $F:\D\to\Om$ be a conformal bijection and $\Om_r=F(r\D)$. Then there exist $r_0<1$ and $C_*<\infty$ such that
\begin{equation}\label{eq:uniformHK}
 0\le p_{\Om_r}(t,x,y)\le C_*t^{-N/2}
 \qquad(r_0<r<1,\ 0<t\le1,\ x,y\in\Om_r).
\end{equation}
The exponent is unchanged, and $C_*$ is independent of $r$.
\end{proposition}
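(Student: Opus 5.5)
The plan is to go through Sobolev inequalities: heat-kernel bound on $\Om$ $\Rightarrow$ Sobolev inequality on $\Om$ $\Rightarrow$ uniform Sobolev inequality on each $\Om_r$ $\Rightarrow$ uniform heat-kernel bound on $\Om_r$.

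\emph{Step 1: Sobolev inequality on $\Om$.} Apply \eqref{eq:standardSob} to the Neumann semigroup $T_t^\Om$. By \eqref{eq:HK}, $\norm{T_t^\Om}{L^1\to L^\infty}\le C_0t^{-N/2}$ for $0<t\le1$. Hence, with $q=2N/(N-2)$,
\[
 \norm{w}{L^q(\Om)}^2\le S\bigl(\norm{\nabla w}{L^2(\Om)}^2+\norm{w}{L^2(\Om)}^2\bigr),\qquad w\in H^1(\Om).
\]
The heat-kernel bound also gives compact embedding \eqref{eq:CI}: $T_t^\Om$ is Hilbert--Schmidt because $|\Om|<\infty$. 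So Lemmas~\ref{lem:anchor} and~\ref{lem:extension} are available.

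\emph{Step 2: transfer to $\Om_r$.} Fix $r>1/2$ and $v\in C^\infty(\overline{\Om_r})$. Apply the Sobolev inequality of Step 1 to $\Phi_rv$. Since $\Phi_rv=v$ on $\Om_r$, \eqref{eq:extensionH} gives
\[
 \norm{v}{L^q(\Om_r)}^2\le\norm{\Phi_rv}{L^q(\Om)}^2\le SC\Bigl(\int_{\Om_r}|\nabla v|^2+\int_{\Om_{1/2}}v^2\Bigr)\le S'\bigl(\mathcal E_{\Om_r}(v,v)+\norm{v}{L^2(\Om_r)}^2\bigr).
\]
Here $S'$ is independent of $r$, and the choice $r_0=1/2$ already suffices. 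Density of $C^\infty(\overline{\Om_r})$ in $H^1(\Om_r)$ holds because $\Om_r$ is smooth, so the inequality extends to all of $H^1(\Om_r)$.

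\emph{Step 3: Nash argument back to the heat kernel.} This is the converse direction, which the paper says will be done explicitly. I expect it to be the main obstacle: constants must be tracked so that they depend only on $S'$ and $N$, and the small-time restriction must be handled. The steps are as follows.

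\begin{enumerate}
\item Hölder interpolation between $L^1$ and $L^q$ turns the Sobolev inequality into the Nash inequality
\[
 \norm{w}{2}^{2+4/N}\le S''\bigl(\mathcal E(w,w)+\norm{w}{2}^2\bigr)\norm{w}{1}^{4/N}.
\]
\item Take $f\ge0$ in $L^1\cap L^2(\Om_r)$ with $\norm{f}{1}=1$, and set $w_t=e^{-t}T^{\Om_r}_tf$. The semigroup is sub-Markov and conserves mass, so $\norm{w_t}{1}\le1$.
\item The function $\phi(t)=\norm{w_t}{2}^2$ satisfies $\phi'=-2(\mathcal E(w_t,w_t)+\phi)\le-2\phi^{1+2/N}/S''$.
\item Integrating this differential inequality gives $\phi(t)\le(NS''/(4t))^{N/2}$, that is, $\norm{e^{-t}T_t}{1\to2}\le c\,t^{-N/4}$.
\item Duality and the semigroup property give $\norm{e^{-t}T_t}{1\to\infty}\le c^2(t/2)^{-N/2}$.
\item For $t\le1$, the factor $e^{t}\le e$ can be absorbed, which yields \eqref{eq:uniformHK} with $C_*$ depending only on $S'$ and $N$.
\end{enumerate}

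Positivity of $p_{\Om_r}$ is standard, from the Beurling--Deny criteria for Neumann forms. The kernel bound holds pointwise on the smooth domain $\Om_r$, where the kernel is continuous up to the boundary.
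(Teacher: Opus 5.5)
Your proof is correct, and your Steps 1 and 3 coincide with the paper's: the Sobolev inequality comes from \eqref{eq:standardSob}, and then interpolation gives a Nash inequality and the differential inequality for $e^{-t}T_t^{\Om_r}$.

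The genuine difference is the transfer step to $\Om_r$. The paper never uses compactness there. It uses only the energy bound \eqref{eq:energy} and the H\"older estimate
\[
 \int_{\Om\setminus\Om_r}|\Phi_rv|^2\le|\Om\setminus\Om_r|^{2/N}\norm{\Phi_rv}{L^q(\Om)}^2 .
\]
It absorbs this term into the left side once $S|\Om\setminus\Om_r|^{2/N}\le\frac12$. The result is the explicit constant $4S$, but only for $r$ beyond a threshold $r_0$ set by how fast $|\Om\setminus\Om_r|\to0$.

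You instead first derive \eqref{eq:CI} from the kernel bound by the Hilbert--Schmidt argument of Remark~\ref{rem:adds}. That remark does not depend on this proposition, so there is no circularity. You then control the $L^2$ mass of the extension outside $\Om_r$ through the anchored Poincar\'e inequality, that is, \eqref{eq:extensionH}.

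Your route gives the explicit range $r>1/2$, and it does not use the H\"older gain from $q>2$ at the transfer step. The price is a Sobolev constant that involves $C_1$ from Lemma~\ref{lem:anchor}, which comes from a contradiction argument and is not explicit. The paper's absorption argument is self-contained and quantitative in $S$ alone. It would apply to any exhaustion with an energy-bounded extension and $|\Om\setminus\Om_r|\to0$, without passing through compactness.

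A minor point: in the Nash step you treat only $f\ge0$. General $f$ follows from $|T_tf|\le T_t|f|$, or directly from the $L^1$-contraction property, as in the paper.
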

\begin{proof}
By \eqref{eq:standardSob}, the bound \eqref{eq:HK} gives
\begin{equation}\label{eq:targetSob}
 \norm{w}{L^q(\Om)}^2\le S\left(
 \int_\Om|\nabla w|^2+\int_\Om|w|^2\right),
 \qquad q=\frac{2N}{N-2},\quad w\in H^1(\Om).
\end{equation}
We transfer this inequality to $\Om_r$ without comparing heat kernels under inclusion.

\emph{Step 1: absorption.} Take $v\in C^\infty(\overline{\Om_r})$ and let $V=\Phi_rv$ be the bounded extension of Lemma~\ref{lem:extension}. Its construction and \eqref{eq:energy} do not use compactness. Put
\[
 m_r=|\Om\setminus\Om_r|,\quad
 X=\norm{V}{L^q(\Om)}^2,\quad
 A_r=\norm{v}{L^2(\Om_r)}^2,\quad
 B_r=\norm{\nabla v}{L^2(\Om_r)}^2.
\]
All these quantities are finite since $V$ is bounded and $|\Om|<\infty$. H\"older's inequality with $1-2/q=2/N$ gives
\begin{equation}\label{eq:holder}
 \int_{\Om\setminus\Om_r}|V|^2\le m_r^{2/N}X.
\end{equation}
Combining \eqref{eq:targetSob}, \eqref{eq:energy} and \eqref{eq:holder}, we obtain
\[
 X\le S\bigl(2B_r+A_r+m_r^{2/N}X\bigr).
\]
Since $m_r\downarrow0$ as $r\uparrow1$, choose $r_0$ such that $Sm_r^{2/N}\le\tfrac12$ for $r>r_0$. Absorbing the last term,
\begin{equation}\label{eq:uniformSob}
 \norm{v}{L^q(\Om_r)}^2\le X\le2S(2B_r+A_r)
 \le4S\left(\norm{\nabla v}{L^2(\Om_r)}^2+
                    \norm{v}{L^2(\Om_r)}^2\right).
\end{equation}
Because $\Om_r$ has smooth boundary, $C^\infty(\overline{\Om_r})$ is dense in $H^1(\Om_r)$. Applying \eqref{eq:uniformSob} to differences of smooth approximants shows that they are Cauchy in $L^q$, and their $L^q$ limit agrees with their $L^2$ limit. Hence \eqref{eq:uniformSob} holds for all $v\in H^1(\Om_r)$ with the same constant $K=4S$, independent of $r$.

\emph{Step 2: from Sobolev to the heat-kernel bound.} Interpolation between $L^1$ and $L^q$ gives
\[
 \norm{v}{L^2}\le\norm{v}{L^1}^{2/(N+2)}
                       \norm{v}{L^q}^{N/(N+2)},
\]
so \eqref{eq:uniformSob} implies the Nash inequality
\begin{equation}\label{eq:Nash}
 \norm{v}{L^2}^{2+4/N}\le
 K\left(\norm{\nabla v}{L^2}^2+\norm{v}{L^2}^2\right)
 \norm{v}{L^1}^{4/N}.
\end{equation}
All norms below are taken on $\Om_r$. Let $S_t=e^{-t(A_{\Om_r}+I)}$, an $L^1$ contraction, and let $f\in L^1\cap L^2$ with $f\ne0$. For $y(t)=\norm{S_tf}{L^2}^2$, the spectral theorem and \eqref{eq:Nash} give, for $t>0$,
\[
 y'(t)=-2\left(\norm{\nabla S_tf}{L^2}^2+
                       \norm{S_tf}{L^2}^2\right)
 \le-\frac{2}{K\norm{f}{L^1}^{4/N}}\,y(t)^{1+2/N}.
\]
If $y$ vanishes at some $t_0$, it vanishes for all $t\ge t_0$ by the semigroup property and the bound below is trivial. Otherwise integration yields
\[
 y(t)\le\left(\frac{NK}{4t}\right)^{N/2}\norm{f}{L^1}^2.
\]
By density, duality, and $S_t=S_{t/2}S_{t/2}$,
\[
 \norm{S_t}{L^1\to L^\infty}
 \le\left(\frac{NK}{2t}\right)^{N/2}.
\]
Since $T_t^{\Om_r}=e^tS_t$, for $0<t\le1$ we obtain
\[
 \norm{T_t^{\Om_r}}{L^1\to L^\infty}
 \le e\left(\frac{NK}{2}\right)^{N/2}t^{-N/2}.
\]
This is the kernel bound almost everywhere with $C_*=e(NK/2)^{N/2}$. Interior continuity of the heat kernel of the smooth domain $\Om_r$ extends it to every $x,y\in\Om_r$.
\end{proof}

\begin{remark}[Smooth targets and the endpoint]\label{rem:endpoint}
If the target domain satisfies $p_\Om(t,x,y)\le Ct^{-1}$, Proposition~\ref{prop:HK} gives, for every fixed $\varepsilon>0$, constants $r_\varepsilon<1$ and $C_\varepsilon<\infty$ such that
\[
 p_{\Om_r}(t,x,y)\le C_\varepsilon t^{-1-\varepsilon},
 \qquad r_\varepsilon<r<1,\quad 0<t\le1.
\]
The constants may depend on $\varepsilon$, and no passage to the endpoint $\varepsilon=0$ is justified. Although each smooth $\Om_r$ separately has a bound $C_rt^{-1}$, this does not show that $C_r$ remains bounded as $r\uparrow1$. The uniform weaker bound suffices for the application of \cite{BP} below. The transfer result concerns the conformal exhaustion. It neither proves nor disproves uniform preservation of the endpoint.
\end{remark}

\subsection{A second proof via spectral stability}\label{subsec:stability}

\begin{remark}[A heat-kernel bound implies compact embedding]\label{rem:adds}
Write $\|\cdot\|_{\mathrm{HS}}$ for the Hilbert--Schmidt norm on $L^2(\Om)$. Since $1\in H^1(\Om)$ has zero form energy, $T_t^\Om 1=1$. For each fixed $0<t\le1$, conservativity and \eqref{eq:HK} give
\[
 \norm{T_t^\Om}{\mathrm{HS}}^2
 =\int_\Om\int_\Om p_\Om(t,x,y)^2\,dy\,dx
 \le C_0t^{-N/2}|\Om|<\infty,
\]
and for $t>1$ the factorization $T_t^\Om=T_1^\Om T_{t-1}^\Om$ shows that $T_t^\Om$ is Hilbert--Schmidt as well. Thus the semigroup is compact for every positive time. Compactness of the resolvent follows from
\[
 (A_\Om+I)^{-1}=\int_0^\infty e^{-t}T_t^\Om\,dt:
\]
the integral over $[\varepsilon,M]$ is compact (factor out $T_{\varepsilon/2}^\Om$), while the operator norms of the omitted integrals tend to zero as $\varepsilon\downarrow0$ and $M\uparrow\infty$. Compactness of the resolvent is equivalent to compactness of the inclusion of the form domain $H^1(\Om)$ into $L^2(\Om)$. Hence \eqref{eq:HK} implies \eqref{eq:CI}.
\end{remark}

\begin{corollary}\label{cor:HKhot}
Assume Hypothesis~\ref{hyp:smooth}. Let $\Om$ be bounded and simply connected and satisfy \eqref{eq:HK} for some $N>2$. If its first positive Neumann eigenvalue is simple, then \eqref{eq:HS} holds for every real eigenfunction belonging to that eigenvalue, and \eqref{eq:boundary} holds for those eigenfunctions that extend continuously to $\overline\Om$.
\end{corollary}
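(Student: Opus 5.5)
The corollary follows from Theorem~\ref{thm:main} by way of Remark~\ref{rem:adds}, so the plan is simply to check the hypotheses of that theorem. The domain $\Om$ is bounded, hence $|\Om|<\infty$, and it is simply connected by assumption. Remark~\ref{rem:adds} shows that the bound \eqref{eq:HK} makes $T_t^\Om$ Hilbert--Schmidt for every $t>0$, hence the resolvent is compact, hence \eqref{eq:CI} holds. With simplicity of $\lambda_1(\Om)$ assumed, Theorem~\ref{thm:main} applies directly. It yields \eqref{eq:HS} for every real eigenfunction of $\lambda_1(\Om)$, and \eqref{eq:boundary} for every such eigenfunction lying in $C(\overline\Om)$. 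This gives a complete proof.

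Since the section advertises a second proof via spectral stability, I would also sketch that route, which avoids the compactness argument of Proposition~\ref{prop:spectral}. Fix $F$ and $\Om_r$. Proposition~\ref{prop:HK} gives $r_0<1$ and $C_*$ such that \eqref{eq:uniformHK} holds uniformly for $r_0<r<1$. The Hölder absorption in that proof also gives the uniform Sobolev inequality \eqref{eq:uniformSob}. Together with $|\Om\setminus\Om_r|\to0$ and the extension bound \eqref{eq:energy}, these are exactly the inputs of the Ba\~nuelos--Pang stability theorems \cite{BP}. The step I would need to verify carefully is that these theorems yield $\lambda_1(\Om_r)\to\lambda_1(\Om)$ together with $L^2$-convergence of the spectral projections, with $\Om_r\subset\Om$ and the eigenfunctions extended by zero or by $\Phi_r$.

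Given that convergence, simplicity ensures that the normalized eigenfunctions $u_r$ converge in $L^2(\Om)$ to $\pm u$. Interior elliptic estimates, as in the last step of Proposition~\ref{prop:spectral}, upgrade this to locally uniform convergence on $\Om$. If \eqref{eq:HS} failed, Lemma~\ref{lem:superlevel} would produce an interior local maximum of some $u_{r_j}$ on the analytic Jordan domain $\Om_{r_j}$, contradicting Hypothesis~\ref{hyp:smooth}. Applying the same argument to $-u$ handles the infimum. Finally, \eqref{eq:boundary} follows from \eqref{eq:HS} by the same uniform continuity argument used at the end of the proof of Theorem~\ref{thm:main}.

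The main obstacle lies only in the second route: matching the precise hypotheses of \cite{BP}, which require uniform heat-kernel bounds on the approximating domains, to what Proposition~\ref{prop:HK} provides. The first route needs nothing beyond Remark~\ref{rem:adds}, so I would present it as the proof.
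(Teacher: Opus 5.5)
Your proof is correct and is the paper's own primary argument: boundedness gives $|\Om|<\infty$, Remark~\ref{rem:adds} turns \eqref{eq:HK} into \eqref{eq:CI}, and Theorem~\ref{thm:main} then delivers both \eqref{eq:HS} and \eqref{eq:boundary}. Your sketch of the second route also matches the paper's alternative proof in outline. That proof settles the point you flag by choosing $r_j\uparrow1$ with $\sup_{x\in\partial\Om_{r_j}}\dist(x,\partial\Om)\le1/j$ to meet the geometric hypothesis of \cite{BP}, and by fixing signs via $\int_{\Om_{r_j}}u_ju>0$; it relies on the Burdzy--Chen semigroup convergence \cite{BC} for targets without boundary regularity.
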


By Remark~\ref{rem:adds}, Corollary~\ref{cor:HKhot} is a special case of Theorem~\ref{thm:main}. It includes every target satisfying $Ct^{-1}$ and having a simple first positive eigenvalue, by the choice of any fixed $N>2$. We give a second proof, which uses Proposition~\ref{prop:HK} together with the stability theory of \cite{BP} in place of Proposition~\ref{prop:spectral}. Its additional content is that the uniform bounds required by \cite{BP} follow from a bound on the target alone, for conformal exhaustions, by the extension and absorption argument rather than by monotonicity of Neumann kernels. Pang's sequel \cite{PangII} treats multiplicity and verifies uniform bounds for the usual polygonal approximation of the Koch snowflake.

\begin{proof}[Second proof of Corollary~\ref{cor:HKhot}, via \cite{BP}]
Proposition~\ref{prop:HK} supplies the bound $p_{\Om_r}(t,x,y)\le C_*t^{-N/2}$ for $r_0<r<1$ and $0<t\le1$, and \eqref{eq:HK} supplies the bound on $\Om$. Put $c_0=\max\{1,C_0,C_*\}$, so that one constant serves $\Om$ and all approximants with $r>r_0$, as \cite[Hypothesis 1.1]{BP} requires. We use the interior-continuous versions of these kernels, so the bounds hold at every interior pair of points, as in \cite[Proposition 3]{BD}. The geometric condition of \cite{BP} is that the boundary of the approximant labeled by $\delta$ lies in the $\delta$-layer of $\partial\Om$. Put
\[
 d_r=\sup_{x\in\partial\Om_r}\dist(x,\partial\Om).
\]
Then $d_r\to0$: for $\varepsilon>0$ the set $K_\varepsilon=\{x\in\Om:\dist(x,\partial\Om)\ge\varepsilon\}$ is compact (see the proof of Lemma~\ref{lem:superlevel}), so $K_\varepsilon\subset\Om_r$ for large $r$, whence $\partial\Om_r\cap K_\varepsilon=\varnothing$ and $d_r\le\varepsilon$. Choose $r_j\uparrow1$ with $r_j>r_0$ and $d_{r_j}\le1/j$, and label the approximants by $\Om_\delta=\Om_{r_j}$ for $1/j\le\delta<1/(j-1)$, $j\ge2$.

Theorems 1.3, 1.4 and 1.5 of \cite{BP} then give convergence of the first positive eigenvalues (indexed $\mu_2$ there), eventual simplicity of the first positive eigenvalue on the approximants, and uniform convergence on compact subsets of $\Om$ of the $L^2$-normalized first eigenfunctions $u_j$ of $\Om_{r_j}$ to the prescribed normalized eigenfunction $u$, once the sign of each $u_j$ is fixed by $\int_{\Om_{r_j}}u_ju>0$, as in the proof of \cite[Theorem 1.5]{BP}. Two remarks on the scope of these theorems are in order. First, \cite[Theorem 1.5]{BP} states uniform convergence on subdomains $\Om_0$ of $\Om$, and its proof uses that $\overline{\Om_0}$ is a compact subset of $\Om$. This is the form used here. Second, \cite[Theorem 1.3]{BP} rests on its Lemma 2.2, a semigroup-convergence statement taken from Burdzy--Chen \cite{BC}; Theorem 2 of \cite{BC} is proved for bounded targets with continuous boundary and increasing approximants with continuous boundaries, and its extension to targets without boundary regularity is described in \cite[Section 3]{BC}. For a target with no boundary regularity this second proof therefore depends on that extension, whereas the first proof, through Theorem~\ref{thm:main}, does not.

With the locally uniform convergence in hand, Lemma~\ref{lem:superlevel} applies exactly as in the proof of Theorem~\ref{thm:main}.
\end{proof}

\begin{remark}[Relation to reflecting Brownian motion]
Burdzy--Chen \cite{BC} prove weak convergence of reflecting Brownian motions along increasing-domain approximations, with the definitions and qualifications specified there. This supplies a semigroup-convergence input in the Ba\~{n}uelos--Pang theory. Weak convergence alone does not provide the uniform heat-kernel bounds used here, nor does it by itself establish compact Sobolev embedding. Section~\ref{sec:main} instead uses energy estimates and compactness directly.
\end{remark}

\section{Domains with polynomial heat-kernel bounds}\label{sec:examples}
This section lists classes of domains satisfying \eqref{eq:HK}. The bounds concern $0<t\le1$ and allow constants depending on the domain. By Proposition~\ref{prop:HK}, a target bound with exponent $N>2$ yields uniform bounds on its conformal exhaustion, and by Remark~\ref{rem:adds} it implies compact embedding. Simplicity of the first positive Neumann eigenvalue is a separate matter, treated in Section~\ref{sec:simplicity}. Further classes (quasihyperbolic H\"older domains, finite decompositions, a slit disk) are collected in Appendix~\ref{app:classes}.

\subsection{The Sobolev criterion}
For a planar domain $\Om$ and $2<q<\infty$, the continuous embedding estimate
\begin{equation}\label{eq:examplesSob}
 \norm{v}{L^q(\Om)}^2\le C\left(
 \norm{\nabla v}{L^2(\Om)}^2+\norm{v}{L^2(\Om)}^2\right)
 \qquad(v\in H^1(\Om))
\end{equation}
is equivalent to a polynomial Neumann heat-kernel bound with exponent $N=2q/(q-2)>2$. See \cite[Proposition 3]{BD}, where this exponent is denoted $M$ and $N$ is the dimension. The condition is a continuous embedding into $L^q$, not merely compactness into $L^2$. It is used in Section~\ref{sec:explicitCusp} and in Remark~\ref{rem:hornNoPolynomial} to show that certain domains have no polynomial bound.

\subsection{Lipschitz domains, quasidisks, and H\"older graph domains}
Bounded Lipschitz domains, including nonconvex polygons, satisfy $p_\Om(t,x,y)\le Ct^{-1}$ for $0<t\le1$. See \cite[Remark 1.2]{BP}.

A further class consists of bounded quasidisks, that is, images of the disk under quasiconformal homeomorphisms of the plane. They include the Koch snowflake domain. Quasidisks are uniform domains, and the Neumann heat kernel of a bounded uniform planar domain satisfies $Ct^{-1}$ by the two-sided Gaussian estimates of Gyrya--Saloff-Coste \cite{GSC} and Murugan \cite{Murugan}, although the boundary need not be rectifiable or locally representable as a graph. Here the small-scale volume bound $|B(x,r)\cap\Om|\ge c r^2$ on a uniform planar domain converts the volume denominator in the Gaussian estimate into $Ct^{-1}$. The snowflake and its polygonal approximations are also treated in \cite[Theorem 1.3]{PangII}. For the weaker polynomial bound needed here it suffices that uniform domains have the Sobolev extension property \cite{Jones}, which gives \eqref{eq:examplesSob} for every finite $q>2$.

Suppose $\Om$ is a bounded planar domain whose boundary is locally, after a rotation and translation, the graph of a H\"older continuous function of exponent $0<\gamma<1$, with $\Om$ lying locally on one side of the graph (the class $\operatorname{Lip}\gamma$ of \cite{BD}). Then \cite[Proposition 3]{BD} gives
\begin{equation}\label{eq:HolderExponents}
 q=\frac{2(1+\gamma)}{1-\gamma},\qquad
 N=1+\frac1\gamma,
 \qquad p_\Om(t,x,y)\le Ct^{-(1+1/\gamma)/2}.
\end{equation}

\subsection{Power cusps}\label{subsec:powerCusps}
For $\alpha>1$, consider the bounded simply connected domain
\[
 \Om_\alpha=\{(x,y):0<x<1,\ |y|<x^\alpha\}.
\]
Near the tip its boundary is the graph $x=|y|^{1/\alpha}$, with the domain on the side $x>|y|^{1/\alpha}$. The remaining boundary is Lipschitz. Figure~\ref{fig:profiles} compares the profile curves $y=x^\alpha$ for several $\alpha$ with the exponential profile treated in Section~\ref{sec:simplicity}. Applying \eqref{eq:HolderExponents} with $\gamma=1/\alpha$ gives
\begin{equation}\label{eq:powerCuspHK}
 p_{\Om_\alpha}(t,z,w)\le C_\alpha t^{-(\alpha+1)/2}.
\end{equation}
This is the planar specialization of \cite[Example 6]{BD}, and the exponent is optimal for this family. The domain is not Lipschitz at the tip. These cusps also have positive quasihyperbolic order. See Appendix~\ref{app:classes}.

\begin{figure}[ht]
\centering
\begin{tikzpicture}[x=4cm,y=3.2cm]
\draw[->] (0,0) -- (1.12,0) node[right,font=\scriptsize] {$x$};
\draw[->] (0,0) -- (0,1.12) node[above,font=\scriptsize] {$w(x)$};
\draw (1,0.02) -- (1,-0.02) node[below,font=\scriptsize] {$1$};
\draw (0.01,1) -- (-0.01,1) node[left,font=\scriptsize] {$1$};
\draw[thin] (0,0) -- (1,1);
\draw[thick,dashed] plot[domain=0:1,samples=40] ({\x},{\x*\x});
\draw[thick,dotted] plot[domain=0:1,samples=40] ({\x},{\x*\x*\x});
\draw[very thick] plot[domain=0.1:1,samples=60] ({\x},{exp(-1/\x)});
\draw[thin] (1.22,0.9) -- (1.34,0.9) node[right,font=\scriptsize] {$w(x)=x$ (Lipschitz wedge)};
\draw[thick,dashed] (1.22,0.74) -- (1.34,0.74) node[right,font=\scriptsize] {$w(x)=x^{2}$ (quadratic cusp)};
\draw[thick,dotted] (1.22,0.58) -- (1.34,0.58) node[right,font=\scriptsize] {$w(x)=x^{3}$ (cubic cusp)};
\draw[very thick] (1.22,0.42) -- (1.34,0.42) node[right,font=\scriptsize] {$w(x)=e^{-1/x}$ (exponential cusp)};
\end{tikzpicture}
\caption{Profile curves $y=w(x)$, $0<x<1$, bounding the domains $\{0<x<1,\ |y|<w(x)\}$. The power profiles $w(x)=x^\alpha$ give the cusps $\Om_\alpha$ ($\alpha=1$ is a Lipschitz wedge). The profile $e^{-1/x}$ is flatter than every power at the tip and leads to the exponential cusp of Theorem~\ref{thm:explicitExpCusp}.}
\label{fig:profiles}
\end{figure}

\section{Cross-cut domains and simplicity of the first positive eigenvalue}\label{sec:simplicity}
All geometric, compactness, and spectral estimates in this section and Section~\ref{sec:unboundedFiniteArea} are independent of Hypothesis~\ref{hyp:smooth}. That hypothesis is used only for the hot-spots conclusions.

When the Sobolev embedding is compact, simplicity of $\lambda_1$ is precisely the inequality $\lambda_1<\lambda_2$. Heat-kernel bounds alone do not imply it. The disk and the square are counterexamples. In this section we verify it for a family of nonsmooth simply connected domains, all of which lie between two graphs. For such \emph{cross-cut domains} the Neumann eigenvalues can be compared with those of an explicit weighted one-dimensional problem. This comparison replaces the compactness-based thin-domain arguments and gives explicit thresholds. Figure~\ref{fig:examples} shows the three families treated below, and Table~\ref{tab:examples} in Section~\ref{subsec:summary} summarizes the results.

\begin{figure}[ht]
\centering
\begin{tikzpicture}
\begin{scope}[shift={(0cm,-1.1cm)},x=2cm,y=2cm]
\filldraw[fill=gray!20,draw=black,thick] (0,0.1) -- (1,0) -- (2,0.1) -- (2,1.1) -- (1,1) -- (0,1.1) -- cycle;
\fill (1,1) circle (1.2pt);
\node[above,font=\scriptsize] at (1,1.08) {reentrant corner};
\end{scope}
\begin{scope}[shift={(5.2cm,0cm)},x=3.4cm,y=3.4cm]
\filldraw[fill=gray!20,draw=black,thick] (0,0) -- plot[domain=0:1,samples=40] ({\x},{0.4*\x*\x}) -- (1,-0.4) -- plot[domain=1:0,samples=40] ({\x},{-0.4*\x*\x}) -- cycle;
\draw[dashed,gray] (-0.05,0) -- (1.05,0);
\node[left,font=\scriptsize] at (0,0) {tip};
\end{scope}
\begin{scope}[shift={(10.2cm,0cm)},x=3.4cm,y=3.4cm]
\filldraw[fill=gray!20,draw=black,thick] (0,0) -- plot[domain=0.1:1,samples=60] ({\x},{exp(-1/\x)}) -- (1,-0.3679) -- plot[domain=1:0.1,samples=60] ({\x},{-exp(-1/\x)}) -- cycle;
\draw[dashed,gray] (-0.05,0) -- (1.05,0);
\node[left,font=\scriptsize] at (0,0) {tip};
\end{scope}
\node[below,font=\small,align=center] at (2cm,-1.5cm) {(a) $T_h$,\\ $h(x)=|x-1|/10$};
\node[below,font=\small,align=center] at (6.9cm,-1.5cm) {(b) $Q_\varepsilon$,\\ $|y|<\varepsilon x^2$};
\node[below,font=\small,align=center] at (11.9cm,-1.5cm) {(c) $E_\varepsilon$,\\ $|y|<\varepsilon e^{-1/x}$};
\end{tikzpicture}
\caption{The explicit domains of Section~\ref{sec:simplicity}. (a) A Lipschitz tube $T_h$ (Proposition~\ref{prop:tubes}) with a reentrant corner at $(1,1)$. (b) The quadratic cusp $Q_\varepsilon$ (Theorem~\ref{thm:quadCusp}) and (c) the exponential cusp $E_\varepsilon$ (Theorem~\ref{thm:explicitExpCusp}), drawn with $\varepsilon=0.4$ and $\varepsilon=1$ so that the shapes are visible. The theorems concern the thinner domains $\varepsilon\le1/5$ and $\varepsilon\le1/10$. Dashed lines mark $y=0$.}
\label{fig:examples}
\end{figure}

\subsection{Cross-cut domains}\label{subsec:crosscutdef}
\begin{definition}\label{def:crosscut}
Let $L>0$ and let $a<b$ be locally Lipschitz functions on $(0,L)$. The \emph{cross-cut domain} is
\[
 G_{a,b}=\{(x,y):0<x<L,\ a(x)<y<b(x)\},
\]
and $\ell=b-a$ is its \emph{cross-cut length}. The vertical segment over $x$ is the cross-cut at $x$ (Figure~\ref{fig:crosscuts}). We assume
\[
 \ell_*=\sup_{0<x<L}\ell(x)<\infty,\qquad
 \kappa_*^2=\operatorname*{ess\,sup}_{0<x<L}\frac{a'(x)^2+a'(x)b'(x)+b'(x)^2}{3}<\infty .
\]
\end{definition}
The map $(x,t)\mapsto(x,a(x)+\ell(x)t)$ is a homeomorphism of $(0,L)\times(0,1)$ onto $G_{a,b}$, so $G_{a,b}$ is simply connected. The bound on $\kappa_*$ implies global bounds on $a'$ and $b'$, since $a'^2+a'b'+b'^2\ge(a'^2+b'^2)/2$. Thus both functions extend to Lipschitz functions on $[0,L]$, and $G_{a,b}$ is bounded. Note that $\kappa_*\le\max\{\|a'\|_\infty,\|b'\|_\infty\}$. In the symmetric case $a=-h$, $b=h$ one has $\ell=2h$ and $\kappa_*=\|h'\|_\infty/\sqrt3$. For a sheared strip $a=h$, $b=1+h$ one has $\ell=1$ and $\kappa_*=\|h'\|_\infty$.

\begin{figure}[ht]
\centering
\begin{tikzpicture}[x=3.4cm,y=3.4cm]
\filldraw[fill=gray!20,draw=black,thick] (0,0) -- plot[domain=0.1:1,samples=60] ({\x},{exp(-1/\x)}) -- (1,-0.3679) -- plot[domain=1:0.1,samples=60] ({\x},{-exp(-1/\x)}) -- cycle;
\foreach \a in {0.35,0.5,0.65,0.8,0.95}{\draw[dashed] (\a,{-exp(-1/\a)}) -- (\a,{exp(-1/\a)});}
\draw[thin,gray,dotted] (.8,{-exp(-1/.8)}) -- (1.06,{-exp(-1/.8)});
\draw[thin,gray,dotted] (.8,{exp(-1/.8)}) -- (1.06,{exp(-1/.8)});
\draw[<->,thin] (1.06,{-exp(-1/.8)}) -- (1.06,{exp(-1/.8)}) node[midway,right,font=\scriptsize] {$\ell(x_0)$};
\node[below,font=\scriptsize] at (.8,-.39) {$x_0$};
\end{tikzpicture}
\caption{Cross-cuts (dashed) of a cross-cut domain, here the profile $h(x)=e^{-1/x}$ with $a=-h$, $b=h$. The marked cross-cut is at $x_0=0.8$. The cross-cut over $x$ is an interval of length $\ell(x)$. Its Neumann spectrum is $(n\pi/\ell(x))^2$, $n\ge0$.}
\label{fig:crosscuts}
\end{figure}

The \emph{longitudinal problem} is the following weighted one-dimensional problem. Let
\[
 \mathcal V_\ell=\Bigl\{f\in H^1_{\mathrm{loc}}(0,L):\ \int_0^L\ell\,(|f|^2+|f'|^2)\,dx<\infty\Bigr\},
\]
and let $0=\mu_0\le\mu_1\le\mu_2\le\cdots$ be the min--max values of
\[
 \frac{\int_0^L\ell\,|f'|^2\,dx}{\int_0^L\ell\,f^2\,dx},\qquad f\in\mathcal V_\ell ,
\]
defined over $(k+1)$-dimensional subspaces of $\mathcal V_\ell$ exactly as the $\lambda_k$ in Section~\ref{sec:main}. No compactness is assumed in this definition. A function $u(x,y)=f(x)$ belongs to $H^1(G_{a,b})$ if and only if $f\in\mathcal V_\ell$, and then $\int_{G_{a,b}}u^2=\int_0^L\ell f^2\,dx$ and $\int_{G_{a,b}}|\nabla u|^2=\int_0^L\ell|f'|^2\,dx$. If $H^1(G_{a,b})\hookrightarrow L^2(G_{a,b})$ is compact, then so is $\mathcal V_\ell\hookrightarrow L^2(\ell\,dx)$, since $y$-independent functions form a closed subspace. In that case the $\mu_k$ are eigenvalues of a self-adjoint operator.

\begin{remark}[Cross-cut techniques in the literature]\label{rem:crosscutlit}
The cross-cut approach below is motivated by the classical use of cross-cuts in geometric function theory and probabilistic potential theory. Decomposing a planar domain along arcs joining boundary points plays an important role in estimates for conformal maps and harmonic measure. See \cite{Ahlfors,GM}. Related geometric ideas enter estimates for Brownian exit times and the Dirichlet fundamental frequency. See for example \cite{BCDuke,BCAIF,BVan,Hayman}. For the Neumann problem considered here, the comparison uses the cross-cut decomposition directly, without a conformal map. On each vertical cross-cut, the transverse Neumann spectrum is explicit, while the slopes of the bounding graphs control the interaction between the longitudinal and transverse directions. This mechanism is closely related to dimension reduction in thin domains \cite{HaleRaugel,Jimbo} and the analysis of Neumann Laplacians on cuspidal domains \cite{HSS}.

The lip domains of Atar and Burdzy \cite{AB} form a subclass of the cross-cut domains considered here. Their theorem establishes simplicity of the first positive Neumann eigenvalue for every nonsquare lip domain, whereas the sufficient condition in Corollary~\ref{cor:crosscutSimple} does not recover that full conclusion. Conversely, their theorem requires the domain itself to be Lipschitz and therefore does not cover the quadratic and exponential cusp domains treated below. For these non-Lipschitz examples, our comparison gives explicit spectral-gap bounds and sufficient parameter ranges for simplicity. See Theorems~\ref{thm:quadCusp} and~\ref{thm:explicitExpCusp}. The results therefore overlap on lip domains satisfying Corollary~\ref{cor:crosscutSimple}, where simplicity is already known from \cite{AB}. For an earlier probabilistic approach to the hot spots conjecture, see \cite{BB}.
\end{remark}

\subsection{The cross-cut comparison principle}
\begin{proposition}[Cross-cut comparison]\label{prop:crosscut}
Let $G=G_{a,b}$ be a cross-cut domain as in Definition~\ref{def:crosscut}, let $\lambda_k(G)$ be its Neumann min--max values as in Section~\ref{sec:main}, and put $c=(\ell_*/\pi)^2$. Then for every $k\ge0$,
\begin{equation}\label{eq:crosscut}
 \frac{\mu_k}{1+\kappa_*^2+c\,\mu_k}\ \le\ \lambda_k(G)\ \le\ \mu_k .
\end{equation}
No compactness is assumed.
\end{proposition}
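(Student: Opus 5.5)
The upper bound in \eqref{eq:crosscut} comes from restricting the min--max to $y$-independent functions. The lower bound comes from averaging over cross-cuts, combined with the transverse Neumann Poincar\'e inequality on each cross-cut.

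\emph{Upper bound.} The map $f\mapsto u(x,y)=f(x)$ embeds $\mathcal V_\ell$ isometrically into $H^1(G)$. As noted after Definition~\ref{def:crosscut}, it preserves both the $L^2$ norm and the Dirichlet integral, so it preserves Rayleigh quotients. Every $(k+1)$-dimensional subspace of $\mathcal V_\ell$ therefore maps to a $(k+1)$-dimensional subspace of $H^1(G)$ with the same supremum of the Rayleigh quotient. Taking the infimum gives $\lambda_k(G)\le\mu_k$.

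\emph{Two estimates for the cross-cut average.} For $u\in H^1(G)$ define
\[
 (Pu)(x)=\frac1{\ell(x)}\int_{a(x)}^{b(x)}u(x,y)\,dy=\int_0^1u(x,a(x)+\ell(x)t)\,dt .
\]
I would prove two estimates, first for $u\in C^\infty(G)\cap H^1(G)$ (Meyers--Serrin) on compact $x$-subintervals where $a,b$ are Lipschitz, and then pass to the limit.
\begin{enumerate}
\item Write $s(x,t)=a'(x)(1-t)+b'(x)t$. Differentiating gives $(Pu)'=\int_0^1(u_x+s\,u_y)\,dt$. Pointwise $|u_x+s u_y|\le\sqrt{1+s^2}\,|\nabla u|$, and Cauchy--Schwarz in $t$ gives
\[
 (Pu)'^2\le\int_0^1(1+s^2)\,dt\cdot\int_0^1|\nabla u|^2\,dt .
\]
Since $\int_0^1 s^2\,dt=(a'^2+a'b'+b'^2)/3\le\kappa_*^2$, multiplying by $\ell$ and undoing the substitution $y=a+\ell t$ gives
\[
 \ell\,(Pu)'^2\le(1+\kappa_*^2)\int_a^b|\nabla u|^2\,dy .
\]
Integrating in $x$ yields $\int_0^L\ell\,(Pu)'^2\le(1+\kappa_*^2)\,\mathcal E_G(u,u)$. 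Together with $\int\ell\,(Pu)^2\le\|u\|^2_{L^2(G)}$, this also shows $Pu\in\mathcal V_\ell$.
\item On each cross-cut, $u-Pu$ has mean zero. The first positive Neumann eigenvalue of an interval of length $\ell(x)$ is $(\pi/\ell(x))^2\ge1/c$. Hence
\[
 \|u\|^2_{L^2(G)}=\int_0^L\ell\,(Pu)^2+\|u-Pu\|^2\le\int_0^L\ell\,(Pu)^2+c\int_G|u_y|^2 .
\]
\end{enumerate}

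\emph{Lower bound.} Put $\lambda=\lambda_k(G)$. If $c\lambda\ge1$, then $\lambda\ge1/c\ge\mu_k/(1+\kappa_*^2+c\mu_k)$ and there is nothing to prove. Otherwise fix $\varepsilon>0$ with $c(\lambda+\varepsilon)<1$, and choose a $(k+1)$-dimensional subspace $W\subset H^1(G)$ on which $\mathcal E_G(u,u)\le(\lambda+\varepsilon)\|u\|^2$.

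For $u\in W$, estimate (2) gives
\[
 \int_0^L\ell\,(Pu)^2\ge(1-c(\lambda+\varepsilon))\,\|u\|^2 .
\]
In particular $P$ is injective on $W$, so $PW$ is a $(k+1)$-dimensional subspace of $\mathcal V_\ell$. Estimate (1) then bounds the Rayleigh quotient of $Pu$:
\[
 \frac{\int_0^L\ell\,(Pu)'^2}{\int_0^L\ell\,(Pu)^2}\le\frac{(1+\kappa_*^2)(\lambda+\varepsilon)}{1-c(\lambda+\varepsilon)} .
\]
The min--max characterization of $\mu_k$ then gives
\[
 \mu_k\le\frac{(1+\kappa_*^2)(\lambda+\varepsilon)}{1-c(\lambda+\varepsilon)} .
\]
Letting $\varepsilon\downarrow0$ and solving for $\lambda$ gives exactly $\lambda\ge\mu_k/(1+\kappa_*^2+c\mu_k)$. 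No compactness enters anywhere in the argument.

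\emph{Main obstacle.} The step needing care is the justification of the derivative formula for $(Pu)'$. The functions $a,b$ are only Lipschitz, and $u$ is only $H^1$ with no boundary regularity. I would handle this by working on $(x_1,x_2)\Subset(0,L)$. There the change of variables $(x,t)\mapsto(x,a+\ell t)$ is bi-Lipschitz, so $u\circ\Psi\in H^1((x_1,x_2)\times(0,1))$ with the chain rule holding almost everywhere. Differentiating under the integral in $t$ is then Fubini applied to weak derivatives. The resulting bound is uniform in $(x_1,x_2)$, so it extends to all of $(0,L)$ by monotone convergence.
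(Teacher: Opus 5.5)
Your proposal is correct and follows the paper's proof essentially step for step. Both use $y$-independent trial functions for the upper bound. For the lower bound, both combine the cross-cut average, the transverse Poincar\'e--Wirtinger inequality, the derivative formula with slope factor $a'+\ell't$ (justified by the bi-Lipschitz pullback on compact subintervals), injectivity of averaging on $W$ when $c\Lambda<1$, and the trivial case $c\Lambda\ge1$. The only cosmetic difference is in the energy estimate. You apply Cauchy--Schwarz pointwise to $u_x+s\,u_y$ and obtain $(1+\kappa_*^2)\int_G|\nabla u|^2$ directly. The paper bounds the $u_x$ and $u_y$ terms separately and reaches the same constant through the intermediate bound $(\sqrt A+\kappa_*\sqrt B)^2$.
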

\begin{proof}
\emph{Upper bound.} The functions $u(x,y)=f(x)$, $f\in\mathcal V_\ell$, lie in $H^1(G)$ with the same Rayleigh quotient as $f$. Min--max over this subspace gives $\lambda_k(G)\le\mu_k$.

\emph{Lower bound.} Fix $k$ and $\eta>0$, and put $\Lambda=\lambda_k(G)+\eta$. By the min--max characterization there is a $(k+1)$-dimensional subspace $W\subset H^1(G)$ of real functions with $\int_G|\nabla u|^2\le\Lambda\int_Gu^2$ for all $u\in W$. For $u\in W$ write $A=\|u_x\|^2_{L^2(G)}$ and $B=\|u_y\|^2_{L^2(G)}$, and define the cross-cut average and fluctuation
\[
 \bar u(x)=\frac1{\ell(x)}\int_{a(x)}^{b(x)}u(x,y)\,dy,\qquad \tilde u=u-\bar u .
\]

\emph{(1) Fluctuation.} For almost every $x$, $u(x,\cdot)\in H^1(a(x),b(x))$ and $\tilde u(x,\cdot)$ has mean zero. The first nonzero Neumann eigenvalue of an interval of length $\ell$ is $(\pi/\ell)^2$, so the Poincar\'e--Wirtinger inequality on each cross-cut gives
\[
 \int_{a}^{b}|\tilde u|^2\,dy\le\Bigl(\frac{\ell}{\pi}\Bigr)^2\int_a^b|u_y|^2\,dy,
 \qquad\text{hence}\qquad
 \|\tilde u\|^2_{L^2(G)}\le c\,B .
\]
Since $\bar u$ and $\tilde u$ are orthogonal on each cross-cut,
\begin{equation}\label{eq:crosscutOrth}
 \|u\|^2_{L^2(G)}=\int_0^L\ell\,\bar u^2\,dx+\|\tilde u\|^2_{L^2(G)} .
\end{equation}

\emph{(2) Averaged energy.} Fix a compact interval $J\Subset(0,L)$. The map $(x,t)\mapsto(x,a(x)+\ell(x)t)$ is bi-Lipschitz from $J\times(0,1)$ onto the corresponding portion of $G$, because $\ell$ is bounded away from zero on $J$. Thus, for $u\in H^1(G)$, its pullback $v$ belongs to $H^1(J\times(0,1))$. Sobolev slicing and the weak chain rule give $\bar u(x)=\int_0^1v(x,t)\,dt$ and, almost everywhere,
\[
 v_x=u_x+\bigl(a'+\ell't\bigr)\,u_y,\qquad\text{evaluated at }(x,a+\ell t).
\]
Taking the weak derivative of this average,
\[
 \bar u'(x)=\int_0^1u_x\,dt+\int_0^1\bigl(a'+\ell't\bigr)\,u_y\,dt .
\]
By the Cauchy--Schwarz inequality, $\bigl|\int_0^1u_x\,dt\bigr|\le\ell^{-1/2}\|u_x(x,\cdot)\|_{L^2(a,b)}$. Since $a'+\ell't$ is affine in $t$ with end values $a'$ and $b'$, we have $\int_0^1(a'+\ell't)^2dt=(a'^2+a'b'+b'^2)/3\le\kappa_*^2$, and therefore
\[
 \Bigl|\int_0^1\bigl(a'+\ell't\bigr)\,u_y\,dt\Bigr|\le\kappa_*\,\ell^{-1/2}\|u_y(x,\cdot)\|_{L^2(a,b)} .
\]
Consequently
\[
 \sqrt{\ell}\,|\bar u'|\le\|u_x(x,\cdot)\|_{L^2(a,b)}+\kappa_*\|u_y(x,\cdot)\|_{L^2(a,b)},
\]
and Minkowski's inequality in $L^2(dx)$ together with the Cauchy--Schwarz inequality give
\begin{equation}\label{eq:crosscutEnergy}
 \int_0^L\ell\,|\bar u'|^2\,dx\le\bigl(\sqrt A+\kappa_*\sqrt B\bigr)^2\le(1+\kappa_*^2)(A+B).
\end{equation}
Since $J$ was arbitrary, the derivative identity holds locally throughout $(0,L)$. Integration and monotone exhaustion by such intervals give \eqref{eq:crosscutEnergy} globally. Jensen's inequality also gives $\int_0^L\ell\bar u^2\le\|u\|_{L^2(G)}^2$. Hence $\bar u\in\mathcal V_\ell$.

\emph{(3) Min--max.} Suppose $c\Lambda<1$. For $u\in W$, \eqref{eq:crosscutOrth} and $B\le\Lambda\|u\|^2$ give
\[
 \int_0^L\ell\,\bar u^2\,dx\ge(1-c\Lambda)\|u\|^2_{L^2(G)}>0 \qquad(u\ne0),
\]
so $u\mapsto\bar u$ is injective on $W$, and its image is a $(k+1)$-dimensional subspace of $\mathcal V_\ell$. By \eqref{eq:crosscutEnergy} and $A+B\le\Lambda\|u\|^2$, every element of the image has Rayleigh quotient at most
\[
 \frac{(1+\kappa_*^2)\Lambda}{1-c\Lambda}.
\]
By min--max for the longitudinal problem, $\mu_k(1-c\Lambda)\le(1+\kappa_*^2)\Lambda$, that is, $\Lambda\ge\mu_k/(1+\kappa_*^2+c\mu_k)$. If $c\Lambda\ge1$, the same lower bound holds trivially because $\mu_k/(1+\kappa_*^2+c\mu_k)<1/c$. Letting $\eta\downarrow0$ proves \eqref{eq:crosscut}.
\end{proof}

\begin{corollary}[Simplicity criterion]\label{cor:crosscutSimple}
Assume that $H^1(G_{a,b})\hookrightarrow L^2(G_{a,b})$ is compact. If $(1+\kappa_*^2)\,\mu_1+c\,\mu_1\mu_2<\mu_2$, then $\lambda_1(G_{a,b})<\lambda_2(G_{a,b})$.
\end{corollary}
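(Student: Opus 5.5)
The plan is to sandwich the two eigenvalues using Proposition~\ref{prop:crosscut}. Its upper bound with $k=1$ gives $\lambda_1(G_{a,b})\le\mu_1$. Its lower bound with $k=2$ gives
\[
 \lambda_2(G_{a,b})\ \ge\ \frac{\mu_2}{1+\kappa_*^2+c\,\mu_2}.
\]
So it will suffice to show that $\mu_1<\mu_2/(1+\kappa_*^2+c\mu_2)$.

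Since the denominator is positive, this inequality is equivalent to $\mu_1(1+\kappa_*^2+c\mu_2)<\mu_2$, that is, $(1+\kappa_*^2)\mu_1+c\,\mu_1\mu_2<\mu_2$, which is exactly the hypothesis. Chaining the bounds gives
\[
 \lambda_1(G_{a,b})\le\mu_1<\frac{\mu_2}{1+\kappa_*^2+c\mu_2}\le\lambda_2(G_{a,b}).
\]

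Two points need care. First, $\mu_2$ must be finite for the algebra to make sense. This holds automatically, since $\mu_k$ is an infimum over subspaces of $\mathcal V_\ell$ of Rayleigh quotients. Any three-dimensional space of smooth functions on $[0,L]$, for instance polynomials of degree at most $2$, lies in $\mathcal V_\ell$ because $\ell\le\ell_*$ and $L<\infty$. Second, the hypothesis forces $\mu_2>0$: if $\mu_2=0$ it would read $0<0$.

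The only conceptual step is translating the strict inequality $\lambda_1<\lambda_2$ into simplicity. Compactness of $H^1(G_{a,b})\hookrightarrow L^2(G_{a,b})$ is what makes the min--max values $\lambda_k(G_{a,b})$ the eigenvalues of $A_{G_{a,b}}$ counted with multiplicity, as recorded in Section~\ref{sec:main}. Hence $\lambda_1<\lambda_2$ is precisely simplicity of the first positive eigenvalue, and $\lambda_1>0$ by Lemma~\ref{lem:anchor}, since the domain is connected and bounded.

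I do not expect a real obstacle. The substantive work, including the absence of any compactness assumption in the comparison, was done in Proposition~\ref{prop:crosscut}.
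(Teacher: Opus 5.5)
Your proposal is correct and follows the paper's argument: the upper bound of Proposition~\ref{prop:crosscut} at $k=1$ and its lower bound at $k=2$, chained through the hypothesis rewritten as $\mu_1<\mu_2/(1+\kappa_*^2+c\mu_2)$. Your side remarks (finiteness of $\mu_2$, and $\lambda_1<\lambda_2$ meaning simplicity under compactness) are accurate, though the paper's one-line proof does not need them.
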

\begin{proof}
By \eqref{eq:crosscut}, $\lambda_1\le\mu_1<\mu_2/(1+\kappa_*^2+c\mu_2)\le\lambda_2$, where the middle inequality is the hypothesis.
\end{proof}

Note that $\mu\mapsto\mu/(1+\kappa_*^2+c\mu)$ is increasing, so in applications one may insert a lower bound for $\mu_2$ and an upper bound for $\mu_1$.

\subsection{Thin profiles}
Let $w:(0,1)\to(0,\infty)$ be locally Lipschitz with $W_0=\sup w<\infty$ and $W_1=\operatorname{ess\,sup}|w'|<\infty$, and let
\[
 G^w_\varepsilon=\{(x,y):0<x<1,\ |y|<\varepsilon\,w(x)\},\qquad\varepsilon>0 .
\]
This is the cross-cut domain with $a=-\varepsilon w$, $b=\varepsilon w$, so $\ell=2\varepsilon w$, $\ell_*=2\varepsilon W_0$ and $\kappa_*=\varepsilon W_1/\sqrt3$. The longitudinal problem has the weight $2\varepsilon w$, and the constant $2\varepsilon$ cancels in the Rayleigh quotient. Hence its min--max values $\mu_k=\mu_k(w)$ are independent of $\varepsilon$ and are those of $\int_0^1w|f'|^2$ relative to $\int_0^1wf^2$.

\begin{corollary}[Thin profiles]\label{cor:thin}
With the notation above, for all $\varepsilon>0$ and $k\ge0$,
\begin{equation}\label{eq:thin}
 \frac{\mu_k}{1+\varepsilon^2\bigl(W_1^2/3+(4W_0^2/\pi^2)\,\mu_k\bigr)}\ \le\ \lambda_k(G^w_\varepsilon)\ \le\ \mu_k .
\end{equation}
In particular $\lambda_k(G^w_\varepsilon)\to\mu_k$ as $\varepsilon\downarrow0$ (Figure~\ref{fig:compression}), and, provided $H^1(G^w_\varepsilon)\hookrightarrow L^2(G^w_\varepsilon)$ is compact, Corollary~\ref{cor:crosscutSimple} shows that its first positive eigenvalue is simple whenever
\begin{equation}\label{eq:thinSimple}
 \mu_1\Bigl(1+\varepsilon^2\bigl(W_1^2/3+(4W_0^2/\pi^2)\,\mu_2\bigr)\Bigr)<\mu_2 .
\end{equation}
\end{corollary}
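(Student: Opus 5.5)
The plan is to specialize Proposition~\ref{prop:crosscut} and Corollary~\ref{cor:crosscutSimple} to the symmetric cross-cut domain $G^w_\varepsilon$. We first check that $G^w_\varepsilon$ meets Definition~\ref{def:crosscut} with $a=-\varepsilon w$ and $b=\varepsilon w$. Both functions are locally Lipschitz, and $a<b$ because $w>0$. Since $\ell=2\varepsilon w$, we get $\ell_*=2\varepsilon W_0<\infty$.

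For $\kappa_*$ we compute the numerator pointwise: with $a'=-\varepsilon w'$ and $b'=\varepsilon w'$,
\[
 a'^2+a'b'+b'^2=\varepsilon^2w'^2,
\]
so $\kappa_*^2=\varepsilon^2W_1^2/3$. Also
\[
 c=(\ell_*/\pi)^2=4\varepsilon^2W_0^2/\pi^2 .
\]

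Next we identify the longitudinal min--max values with those of the profile $w$. The weight $\ell=2\varepsilon w$ differs from $w$ by a positive constant. Hence $\mathcal V_\ell$ is the same space of functions as the corresponding space for the weight $w$, and Rayleigh quotients are unchanged. So $\mu_k=\mu_k(w)$, independent of $\varepsilon$, as stated before the corollary.

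Inserting these values into \eqref{eq:crosscut} gives
\[
 \frac{\mu_k}{1+\varepsilon^2W_1^2/3+(4\varepsilon^2W_0^2/\pi^2)\mu_k}\le\lambda_k(G^w_\varepsilon)\le\mu_k .
\]
This is exactly \eqref{eq:thin} after factoring out $\varepsilon^2$. Proposition~\ref{prop:crosscut} assumes no compactness, so \eqref{eq:thin} holds for all $\varepsilon>0$ and $k\ge0$. If $\mu_k=\infty$ is allowed in the min--max values, the bound is read with the convention that the lower bound is $\pi^2/(4\varepsilon^2W_0^2)$. In the finite case, the denominator tends to $1$ as $\varepsilon\downarrow0$ for fixed $k$, and squeezing gives $\lambda_k(G^w_\varepsilon)\to\mu_k$.

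For simplicity, assume compact embedding and apply Corollary~\ref{cor:crosscutSimple}. Its hypothesis $(1+\kappa_*^2)\mu_1+c\mu_1\mu_2<\mu_2$, with the values of $\kappa_*$ and $c$ above, reads
\[
 \mu_1\bigl(1+\varepsilon^2W_1^2/3+(4\varepsilon^2W_0^2/\pi^2)\mu_2\bigr)<\mu_2 ,
\]
which is \eqref{eq:thinSimple}.

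No step is a real obstacle; the only care needed is the pointwise algebra for $\kappa_*^2$. A minor point is that the $\mu_k$ are finite, since compactness of the longitudinal embedding follows from \eqref{eq:CI} as noted in Section~\ref{subsec:crosscutdef}. This finiteness is needed for \eqref{eq:thinSimple} to be meaningful and is available in the simplicity part.
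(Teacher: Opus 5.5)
Your proposal is correct and follows the paper's argument exactly: specialize Proposition~\ref{prop:crosscut} and Corollary~\ref{cor:crosscutSimple} with $\kappa_*^2=\varepsilon^2W_1^2/3$ and $c=(2\varepsilon W_0/\pi)^2$. Your caveat about $\mu_k=\infty$ is unnecessary, and finiteness does not need compactness: $\mathcal V_\ell$ contains $C_c^\infty(0,1)$, which has subspaces of every finite dimension, so every min--max value $\mu_k$ is finite.
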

\begin{proof}
This is Proposition~\ref{prop:crosscut} with $\kappa_*^2=\varepsilon^2W_1^2/3$ and $c=(2\varepsilon W_0/\pi)^2$.
\end{proof}

\begin{figure}[ht]
\centering
\begin{tikzpicture}
\begin{scope}[x=3.4cm,y=1.7cm]
\filldraw[fill=gray!20,draw=black,thick] (0,0) -- plot[domain=0:1,samples=40] ({\x},{\x*\x}) -- (1,-1) -- plot[domain=1:0,samples=40] ({\x},{-\x*\x}) -- cycle;
\draw[dashed,gray] (-0.05,0) -- (1.05,0);
\end{scope}
\begin{scope}[shift={(7.2cm,0cm)},x=3.4cm,y=1.7cm]
\filldraw[fill=gray!20,draw=black,thick] (0,0) -- plot[domain=0:1,samples=40] ({\x},{0.25*\x*\x}) -- (1,-0.25) -- plot[domain=1:0,samples=40] ({\x},{-0.25*\x*\x}) -- cycle;
\draw[dashed,gray] (-0.05,0) -- (1.05,0);
\end{scope}
\draw[->,thick] (4.0cm,0cm) -- (6.6cm,0cm) node[midway,above,font=\small] {$(x,z)\mapsto(x,\varepsilon z)$};
\node[below,font=\small] at (1.7cm,-1.8cm) {$G^w_1$, $w(x)=x^{2}$};
\node[below,font=\small] at (8.9cm,-1.8cm) {$G^w_\varepsilon$, drawn with $\varepsilon=1/4$};
\end{tikzpicture}
\caption{The vertical compression $G^w_1\to G^w_\varepsilon$ for $w(x)=x^2$. As $\varepsilon\downarrow0$ the domain collapses onto the segment $(0,1)$, and by Corollary~\ref{cor:thin} each fixed-index Neumann eigenvalue converges to the corresponding weighted one-dimensional eigenvalue.}
\label{fig:compression}
\end{figure}

\begin{lemma}[Simplicity of the longitudinal spectrum]\label{lem:regularEndpointSimple}
Suppose in addition that $w$ is positive and $C^1$ on $(0,1]$ and that $H^1(G^w_1)\hookrightarrow L^2(G^w_1)$ is compact. Then the longitudinal eigenvalues satisfy $0=\mu_0<\mu_1<\mu_2<\cdots$, that is, every longitudinal eigenvalue is simple.
\end{lemma}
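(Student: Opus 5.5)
The plan is to show that the longitudinal eigenvalue problem is a Sturm--Liouville problem with a regular right endpoint, so that each eigenspace is determined by a one-dimensional space of solutions of an ODE.

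First we get discreteness. The $y$-independent functions form a closed subspace of $H^1(G^w_1)$, and the compactness assumption passes to it. So, as noted after Definition~\ref{def:crosscut}, the embedding $\mathcal V_w\hookrightarrow L^2(w\,dx)$ is compact. The $\mu_k$ are therefore the eigenvalues, with multiplicity, of the self-adjoint operator associated with the closed form $\int_0^1 w|f'|^2$ on $\mathcal V_w$. Moreover $\mu_0=0$ is simple: a function in $\mathcal V_w$ with $f'=0$ is constant, and constants lie in $\mathcal V_w$ because $\int_0^1 w<\infty$.

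Next we derive the ODE and the endpoint condition. Let $f$ be an eigenfunction for $\mu$. Testing the weak equation $\int w f'\varphi'=\mu\int w f\varphi$ with $\varphi\in C_c^\infty(0,1)$ gives $(wf')'=-\mu wf$ in the distributional sense. Since $w$ is positive and $C^1$ on $(0,1]$, this makes $wf'$ locally absolutely continuous. Hence $f\in C^2(0,1]$ solves the second-order linear ODE
\[
 f''+\frac{w'}{w}f'+\mu f=0,
\]
whose coefficients are continuous up to $x=1$. Now test with functions $\varphi\in\mathcal V_w$ that are smooth on $[1/2,1]$, equal to $1$ near $1$, and vanish on $(0,1/2)$; these belong to $\mathcal V_w$. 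Integrating by parts on $[1/2,1]$ then gives the natural boundary condition $w(1)f'(1)=0$, that is, $f'(1)=0$.

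Simplicity then follows from the ODE. Suppose $f$ and $g$ are two eigenfunctions for the same $\mu$. Then $h=g(1)f-f(1)g$ is also an eigenfunction or zero, solves the ODE on $(0,1]$, and satisfies $h(1)=h'(1)=0$. Because the ODE is regular at $x=1$, uniqueness for the initial value problem forces $h\equiv0$ on $(0,1]$. We still need $(f(1),g(1))\neq(0,0)$. If $f(1)=0$, then $f(1)=f'(1)=0$ and the same uniqueness argument gives $f\equiv0$, a contradiction. So $f(1)\ne0$, and $g$ is a multiple of $f$. Thus every eigenspace is one-dimensional, and since the $\mu_k$ count multiplicities, the strict chain $0=\mu_0<\mu_1<\mu_2<\cdots$ follows.

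The main obstacle is the justification at the right endpoint. We must show that $f$ is $C^1$ up to $1$ and that the weak form really forces $f'(1)=0$, even though the form domain $\mathcal V_w$ imposes nothing at the singular end $x=0$. Regularity and positivity of $w$ near $1$ make this a local one-dimensional argument. The cusp at $0$ is handled entirely through compactness and never enters the uniqueness argument. Working from the regular endpoint avoids any need for limit-point/limit-circle classification at $0$.
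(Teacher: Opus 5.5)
Your proposal is correct and follows essentially the same route as the paper. Both arguments get discreteness from the $y$-independent subspace, derive regularity and the natural condition $f'(1)=0$ at the regular endpoint from test functions vanishing near $0$, and conclude simplicity from uniqueness for the initial value problem at $x=1$; the paper makes your terse regularity step explicit by noting that $wf'\in H^1(1-\theta,1)\subset C[1-\theta,1]$, which is exactly what gives $f\in C^1[1-\theta,1]$.
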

\begin{proof}
By the remark after Definition~\ref{def:crosscut}, the longitudinal problem has compact resolvent, so the $\mu_k$ are eigenvalues. An eigenfunction $f$ satisfies $-(wf')'=\mu wf$ weakly in $(0,1)$. By local regularity it is a classical $C^2$ solution. Near $x=1$ one has $w\ge c>0$, so $f\in H^1(1-\theta,1)$ for some $\theta>0$, and the equation $(wf')'=-\mu wf\in L^2(1-\theta,1)$ gives $wf'\in H^1(1-\theta,1)\subset C[1-\theta,1]$. Hence $f\in C^1[1-\theta,1]$. Testing the weak equation against functions that are smooth up to $x=1$ and vanish near $0$, and integrating by parts, gives the natural condition $f'(1)=0$, because $w(1)>0$. Since $w\in C^1$ and $w>0$ near $x=1$, the ODE is regular there, so $(f(1),f'(1))$ determines $f$. The solutions with $f'(1)=0$ therefore form a space of dimension at most one, and every eigenvalue is simple. Constants give the simple eigenvalue $0$, and $\mu_1>0$ since $(0,1)$ is connected.
\end{proof}

\begin{theorem}[Simplicity for thin profiles]\label{thm:thinSimple}
Under the hypotheses of Lemma~\ref{lem:regularEndpointSimple}, there is $\varepsilon_0>0$ such that for every $0<\varepsilon<\varepsilon_0$ the domain $G^w_\varepsilon$ is bounded and simply connected, has compact embedding $H^1(G^w_\varepsilon)\hookrightarrow L^2(G^w_\varepsilon)$, and has a simple first positive Neumann eigenvalue. One may take any $\varepsilon_0$ for which \eqref{eq:thinSimple} holds on $(0,\varepsilon_0)$.
\end{theorem}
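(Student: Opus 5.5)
The plan is to reduce everything to the case $\varepsilon=1$ by the linear change of variables $S_\varepsilon(x,z)=(x,\varepsilon z)$, which maps $G^w_1$ onto $G^w_\varepsilon$. Then simplicity follows from Corollary~\ref{cor:thin} together with Lemma~\ref{lem:regularEndpointSimple}, and a monotonicity argument in $\varepsilon$ produces $\varepsilon_0$.

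\emph{Geometric properties.} The domain $G^w_\varepsilon$ is the cross-cut domain with $a=-\varepsilon w$ and $b=\varepsilon w$. Both functions are locally Lipschitz, and $\ell_*=2\varepsilon W_0<\infty$ and $\kappa_*=\varepsilon W_1/\sqrt3<\infty$. By the remarks after Definition~\ref{def:crosscut}, $G^w_\varepsilon$ is therefore simply connected, and bounded because it lies in $(0,1)\times(-\varepsilon W_0,\varepsilon W_0)$.

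\emph{Compact embedding.} For $v\in H^1(G^w_\varepsilon)$, set $\tilde v=v\circ S_\varepsilon\in H^1(G^w_1)$. Then
\[
\|\tilde v\|_{L^2(G^w_1)}^2=\varepsilon^{-1}\|v\|^2_{L^2(G^w_\varepsilon)},
\]
and $\tilde v_x=v_x\circ S_\varepsilon$, $\tilde v_z=\varepsilon\, v_y\circ S_\varepsilon$. Consequently $\|\tilde v\|_{H^1(G^w_1)}\le C(\varepsilon)\|v\|_{H^1(G^w_\varepsilon)}$, and the inverse map is bounded in the same way. A bounded sequence in $H^1(G^w_\varepsilon)$ therefore pulls back to a bounded sequence in $H^1(G^w_1)$. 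That sequence has an $L^2$-convergent subsequence by hypothesis, and pushing forward preserves $L^2$ convergence up to the factor $\varepsilon^{1/2}$. Hence compactness holds for every $\varepsilon>0$.

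\emph{Simplicity.} By Lemma~\ref{lem:regularEndpointSimple}, applied under the stated hypotheses, the longitudinal values satisfy $0<\mu_1<\mu_2<\infty$. Recall that these values are independent of $\varepsilon$. Consider
\[
\phi(\varepsilon)=\mu_1\Bigl(1+\varepsilon^2\bigl(W_1^2/3+(4W_0^2/\pi^2)\mu_2\bigr)\Bigr).
\]
This function is continuous and nondecreasing in $\varepsilon$, and $\phi(0^+)=\mu_1<\mu_2$. So \eqref{eq:thinSimple} holds on some interval $(0,\varepsilon_0)$, and by monotonicity it holds on all of $(0,\varepsilon_0)$ as soon as it holds at $\varepsilon_0$. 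An explicit admissible choice is
\[
\varepsilon_0^2=\frac{\mu_2-\mu_1}{\mu_1\bigl(W_1^2/3+4W_0^2\mu_2/\pi^2\bigr)}.
\]
For any such $\varepsilon$, compactness has already been established, so Corollary~\ref{cor:thin} (that is, Corollary~\ref{cor:crosscutSimple}) gives $\lambda_1(G^w_\varepsilon)<\lambda_2(G^w_\varepsilon)$. The same argument shows that any $\varepsilon_0$ for which \eqref{eq:thinSimple} holds on $(0,\varepsilon_0)$ works.

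\emph{Main obstacle.} The only point needing care is transferring compactness of the embedding from $\varepsilon=1$ to general $\varepsilon$. Since $S_\varepsilon$ is a linear bi-Lipschitz bijection between the domains, this is routine. We also need $\mu_2<\infty$ and $\mu_1<\mu_2$ as genuine eigenvalues, which Lemma~\ref{lem:regularEndpointSimple} supplies.
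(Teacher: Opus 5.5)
Your proposal is correct and follows the paper's proof. You transfer compactness from $G^w_1$ through the linear map $(x,y)\mapsto(x,\varepsilon y)$, use Lemma~\ref{lem:regularEndpointSimple} to get $\mu_1<\mu_2$ so that \eqref{eq:thinSimple} holds for small $\varepsilon$, and conclude with Corollary~\ref{cor:thin}. The explicit $\varepsilon_0$ and the scaling computations are correct added detail; note that at your explicit $\varepsilon_0$ the inequality becomes an equality, so it holds only on the open interval $(0,\varepsilon_0)$, which is all the statement needs.
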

\begin{proof}
Compactness for $G^w_\varepsilon$ follows from that of $G^w_1$, because $G^w_\varepsilon$ is the image of $G^w_1$ under the linear map $(x,y)\mapsto(x,\varepsilon y)$. By Lemma~\ref{lem:regularEndpointSimple}, $\mu_1<\mu_2$, so \eqref{eq:thinSimple} holds for all sufficiently small $\varepsilon$. Corollary~\ref{cor:thin} then gives $\lambda_1(G^w_\varepsilon)<\lambda_2(G^w_\varepsilon)$.
\end{proof}

\subsection{Example I: Lipschitz tubes}
\begin{proposition}\label{prop:tubes}
Let $h:[0,2]\to\R$ be Lipschitz with $\kappa=\|h'\|_\infty<\sqrt2$. Then
\[
 T_h=\{(x,y):0<x<2,\ h(x)<y<1+h(x)\}
\]
is a bounded simply connected Lipschitz domain, with a polynomial Neumann heat-kernel bound $Ct^{-1}$ and compact embedding, and its first positive Neumann eigenvalue is simple. More precisely,
\[
 \lambda_1(T_h)\le\frac{\pi^2}4<\frac{\pi^2}{2+\kappa^2}\le\lambda_2(T_h).
\]
\end{proposition}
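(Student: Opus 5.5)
The proof applies the cross-cut machinery to the sheared strip $T_h=G_{a,b}$ with $L=2$, $a=h$, $b=1+h$. Then $\ell\equiv1$, $\ell_*=1$, $c=1/\pi^2$, and, as noted after Definition~\ref{def:crosscut}, $\kappa_*=\|h'\|_\infty=\kappa$. With constant weight, the longitudinal problem is the Neumann problem for $-f''$ on $(0,2)$. Its eigenvalues are $\mu_k=(k\pi/2)^2$, with $\mu_1=\pi^2/4$ and $\mu_2=\pi^2$.

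\textbf{Geometry and compactness.} $T_h$ is bounded and simply connected by the homeomorphism $(x,t)\mapsto(x,h(x)+t)$. It is a Lipschitz domain. Along the top and bottom edges the boundary is locally a Lipschitz graph over $x$. Along the vertical sides $x=0,2$ it is locally a graph over $y$. At the four corners the interior angle is bounded away from $0$ and $2\pi$, because the slopes $h'(0),h'(2)$ are finite. So near each corner the domain is locally the region above a Lipschitz graph in suitably rotated coordinates; one can use the bisector direction, since $|h'|\le\kappa<\infty$ keeps the cone condition. This is the only point that needs a little care, and I expect it to be the main (mild) obstacle: checking the corner cone condition uniformly. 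Once $T_h$ is Lipschitz, \cite[Remark 1.2]{BP} gives the bound $Ct^{-1}$, hence \eqref{eq:HK} for any $N>2$. Remark~\ref{rem:adds} then gives compact embedding. Alternatively, compactness follows from the bi-Lipschitz map $(x,t)\mapsto(x,h+t)$ of the square $(0,2)\times(0,1)$, whose Rellich compactness transfers directly.

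\textbf{Spectral bounds.} Proposition~\ref{prop:crosscut} with $k=1$ gives $\lambda_1(T_h)\le\mu_1=\pi^2/4$. With $k=2$ it gives
\[
 \lambda_2(T_h)\ge\frac{\pi^2}{1+\kappa^2+\pi^{-2}\pi^2}=\frac{\pi^2}{2+\kappa^2}.
\]
Finally, $\pi^2/4<\pi^2/(2+\kappa^2)$ if and only if $2+\kappa^2<4$, that is, $\kappa<\sqrt2$, which is the hypothesis. Hence $\lambda_1<\lambda_2$, and with compact embedding this is simplicity. Equivalently, one can invoke Corollary~\ref{cor:crosscutSimple}: the condition $(1+\kappa^2)\pi^2/4+\pi^2/4<\pi^2$ reduces to the same inequality.
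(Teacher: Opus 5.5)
Your proposal is correct and follows the paper's proof essentially step for step. It feeds the same cross-cut data ($\ell\equiv1$, $\kappa_*=\kappa$, $c=1/\pi^2$, $\mu_1=\pi^2/4$, $\mu_2=\pi^2$) into Proposition~\ref{prop:crosscut}, and it uses the same corner-transversality argument for the Lipschitz property before invoking \cite[Remark 1.2]{BP} and Remark~\ref{rem:adds}.

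One small wording fix: since $h$ is only Lipschitz, $h'(0)$ and $h'(2)$ need not exist. What your bisector argument actually uses is that near each corner the graph stays in a cone, such as $|y-h(0)|\le\kappa x$ at the corner $(0,h(0))$.
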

\begin{proof}
$T_h$ is the cross-cut domain with $a=h$, $b=1+h$, $L=2$, so $\ell\equiv1$, $\ell_*=1$ and $\kappa_*=\kappa$. The weighted problem is the Neumann problem for $-f''$ on $(0,2)$, with $\mu_k=(k\pi/2)^2$. Thus $\mu_1=\pi^2/4$ and $\mu_2=\pi^2$. With $c=1/\pi^2$, Proposition~\ref{prop:crosscut} gives
\[
 \lambda_1\le\frac{\pi^2}4,\qquad
 \lambda_2\ge\frac{\pi^2}{1+\kappa^2+c\pi^2}=\frac{\pi^2}{2+\kappa^2},
\]
and $\pi^2/(2+\kappa^2)>\pi^2/4$ if and only if $\kappa^2<2$. The domain $T_h$ is Lipschitz for every finite $\kappa$: at each of the four corners a vertical side meets a graph of slope at most $\kappa$ transversally, and in a suitably rotated frame the boundary near the corner is a Lipschitz graph. Hence the heat-kernel bound $Ct^{-1}$ and compactness follow from Section~\ref{sec:examples}. The restriction $\kappa<\sqrt2$ is used only for the spectral gap.
\end{proof}
For $h(x)=|x-1|/10$ one obtains a nonconvex polygon with a reentrant corner at $(1,1)$ (Figure~\ref{fig:examples}(a)). Slope bounds strictly below $\sqrt2$ are allowed, and no differentiability of $h$ beyond Lipschitz continuity is required.

\subsection{Example II: quadratic cusps}
\begin{theorem}\label{thm:quadCusp}
For every $0<\varepsilon\le1/5$, the domain
\[
 Q_\varepsilon=\{(x,y):0<x<1,\ |y|<\varepsilon x^2\}
\]
is bounded and simply connected, has a non-Lipschitz cusp (of quasihyperbolic order $1/2$ in the sense of Appendix~\ref{app:classes}), satisfies $p_{Q_\varepsilon}(t,z,w)\le C_\varepsilon t^{-3/2}$ for $0<t\le1$, has compact embedding, and has a simple first positive Neumann eigenvalue. More precisely,
\[
 \lambda_1(Q_\varepsilon)<22.21<23.3<\lambda_2(Q_\varepsilon).
\]
\end{theorem}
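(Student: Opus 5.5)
The plan is to realize $Q_\varepsilon$ as the thin-profile domain $G^w_\varepsilon$ with $w(x)=x^2$, so that $W_0=\sup w=1$ and $W_1=\operatorname{ess\,sup}|w'|=2$. The qualitative properties then reduce to earlier sections, and the spectral inequalities reduce to computing the longitudinal values $\mu_1,\mu_2$ and inserting them into Corollary~\ref{cor:thin}.

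\emph{Qualitative properties.} Boundedness and simple connectivity hold for every cross-cut domain (remark after Definition~\ref{def:crosscut}). The map $(x,y)\mapsto(x,\varepsilon y)$ carries the power cusp $\Om_2$ of Section~\ref{subsec:powerCusps} onto $Q_\varepsilon$. A bi-Lipschitz linear change of variables preserves the Sobolev inequality \eqref{eq:examplesSob}, with constants depending on $\varepsilon$. Hence \eqref{eq:powerCuspHK} with $\alpha=2$ transfers through the Sobolev criterion, giving $p_{Q_\varepsilon}\le C_\varepsilon t^{-3/2}$. Remark~\ref{rem:adds} then yields compact embedding. The domain is not Lipschitz at the tip, since both boundary arcs are tangent to the axis there. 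The quasihyperbolic order I would take from the appendix.

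\emph{Longitudinal spectrum.} The weighted equation is $-(x^2f')'=\mu x^2 f$ on $(0,1)$ with $f'(1)=0$. Substitute $f=g/x$. Then $x^2f'=xg'-g$ and $(x^2f')'=xg''$, so the equation becomes $g''=-\mu g$. Writing $k=\sqrt\mu$, the candidate solutions are $\sin(kx)/x$ and $\cos(kx)/x$.

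The $\cos(kx)/x$ branch must be excluded using the energy condition in $\mathcal V_\ell$, not the $L^2$ condition. Indeed $\int x^2f^2$ is finite for $f=\cos(kx)/x$, but $f'\sim -x^{-2}$, so $\int x^2|f'|^2$ diverges at $0$. Therefore $f=\sin(kx)/x$. The condition $f'(1)=0$ reads $k\cos k=\sin k$, that is, $\tan k=k$.

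By Lemma~\ref{lem:regularEndpointSimple}, which applies because $w$ is positive and $C^1$ on $(0,1]$ and compact embedding holds, the $\mu_k$ are simple eigenvalues. Since every eigenfunction is of this form, they are exactly the squares of the positive roots of $\tan k=k$ in increasing order. This gives $\mu_1=k_1^2$ with $k_1\approx4.4934$, so $\mu_1\approx20.19$. It gives $\mu_2=k_2^2$ with $k_2\approx7.7253$, so $\mu_2\approx59.68$. I would certify these numerically by sign changes of $\tan k-k$ on short rational intervals.

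\emph{Conclusion.} Corollary~\ref{cor:thin} gives $\lambda_1\le\mu_1<22.21$. For the lower bound, $\lambda_2\ge\mu_2/\bigl(1+\varepsilon^2(4/3+(4/\pi^2)\mu_2)\bigr)$. The right side is decreasing in $\varepsilon$ and increasing in $\mu_2$, so it suffices to use $\varepsilon=1/5$ and a certified lower bound for $\mu_2$. The denominator is then about $1+(1.333+24.19)/25\approx2.02$, so $\lambda_2\gtrsim29.5>23.3$.

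The main obstacle is the rigorous identification of the longitudinal spectrum: justifying that the min--max values are exactly the roots of $\tan k=k$, with no missing eigenvalues and the cosine branch correctly excluded at the singular endpoint. The arithmetic itself has ample room to spare.
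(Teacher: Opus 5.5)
Your proposal is correct and follows the paper's proof essentially step for step: the thin-profile realization with $W_0=1$, $W_1=2$; the transfer from the power cusp $\Om_2$; the eigenfunctions $\sin(kx)/x$, with the cosine branch excluded by the energy condition; the condition $\tan k=k$; and Corollary~\ref{cor:thin}. The only difference is that the paper avoids numerics: $\tan k-k$ is increasing on each branch $(j\pi-\pi/2,j\pi+\pi/2)$ and negative on $(j\pi-\pi/2,j\pi]$, so the $j$-th positive root lies in $(j\pi,j\pi+\pi/2)$, which gives $\mu_1<(3\pi/2)^2<22.21$ and $\mu_2>4\pi^2$ directly; your route needs the same monotonicity argument, because sign changes on short intervals locate roots but cannot rule out further roots below $k_2$, and your lower bound on $\mu_2$ depends on excluding them.
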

\begin{proof}
The geometric statements and the heat-kernel bound are those of the power cusp $\Om_\alpha$ with $\alpha=2$ in Section~\ref{subsec:powerCusps}, since $Q_\varepsilon$ is the image of $\Om_2$ under the bi-Lipschitz map $(x,y)\mapsto(x,\varepsilon y)$. Compactness follows from Remark~\ref{rem:adds}. For the spectrum, $Q_\varepsilon=G^w_\varepsilon$ with $w(x)=x^2$, $W_0=1$ and $W_1=2$.

\emph{The longitudinal problem.} It is the problem for $\int_0^1x^2|f'|^2$ relative to $\int_0^1x^2f^2$, that is, the radial Neumann problem on the unit ball $\mathbb B\subset\R^3$: for $\Psi(z)=f(|z|)$ one has $\int_{\mathbb B}|\nabla \Psi|^2=4\pi\int_0^1 x^2f'^2\,dx$ and $\int_{\mathbb B}\Psi^2=4\pi\int_0^1 x^2f^2\,dx$. By Lemma~\ref{lem:regularEndpointSimple} its eigenvalues are simple. An eigenfunction solves $-(x^2f')'=\mu x^2f$ with $f'(1)=0$. For $\mu=k^2>0$ the solutions are $A\sin(kx)/x+B\cos(kx)/x$, and finiteness of $\int x^2|f'|^2$ forces $B=0$. The condition $f'(1)=0$ for $f=\sin(kx)/x$ reads $k\cos k=\sin k$, that is, $\tan k=k$. The function $\tan k-k$ increases from $-\infty$ to $+\infty$ on each interval $(j\pi-\pi/2,\,j\pi+\pi/2)$ and is negative on $(j\pi-\pi/2,j\pi]$ for $j\ge1$, so the positive roots are $k_j\in(j\pi,j\pi+\pi/2)$, $j=1,2,\dots$. Hence
\[
 \mu_1=k_1^2<(3\pi/2)^2<22.21,\qquad \mu_2=k_2^2>(2\pi)^2>39.47 .
\]

\emph{The comparison.} By Corollary~\ref{cor:thin}, with $W_1^2/3=4/3$ and $4W_0^2/\pi^2=4/\pi^2$,
\[
 \lambda_1\le\mu_1<22.21,\qquad
 \lambda_2\ge\frac{\mu_2}{1+\varepsilon^2\bigl(4/3+(4/\pi^2)\mu_2\bigr)}\ \ge\ \frac{4\pi^2}{1+\varepsilon^2(4/3+16)}=\frac{4\pi^2}{1+(52/3)\varepsilon^2},
\]
where we used that the left-hand function is increasing in $\mu_2$ and inserted $\mu_2>4\pi^2$. For $\varepsilon\le1/5$ the right-hand side is at least $4\pi^2/1.6934>23.3$. Hence $\lambda_1<22.21<23.3<\lambda_2$.
\end{proof}
Numerically, the roots of $\tan k=k$ give $\mu_1\approx20.19$ and $\mu_2\approx59.68$. These values are not used in the proof.

\subsection{Example III: an exponential cusp without a polynomial heat-kernel bound}\label{sec:explicitCusp}

\begin{theorem}\label{thm:explicitExpCusp}
For every $0<\varepsilon\le1/10$, the domain
\[
 E_\varepsilon=\{(x,y):0<x<1,\ |y|<\varepsilon e^{-1/x}\}
\]
is bounded and simply connected, has a non-Lipschitz cusp, and has compact embedding $H^1(E_\varepsilon)\hookrightarrow L^2(E_\varepsilon)$. Its Neumann eigenvalues, counted with multiplicity from $\lambda_0=0$, satisfy
\[
 \lambda_1(E_\varepsilon)<31.3<32<\lambda_2(E_\varepsilon).
\]
In particular, its first positive Neumann eigenvalue is simple. Nevertheless, there are no constants $C,N<\infty$ such that $p_{E_\varepsilon}(t,z,w)\le Ct^{-N/2}$ for $0<t\le1$ and almost every $z,w$.
\end{theorem}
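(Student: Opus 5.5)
The proof has four parts: geometry, compactness, the spectral gap, and the absence of a polynomial bound. The domain is $E_\varepsilon=G^w_\varepsilon$ with $w(x)=e^{-1/x}$, so Definition~\ref{def:crosscut} and Corollary~\ref{cor:thin} apply. Here $W_0=e^{-1}$. The slope $w'(x)=x^{-2}e^{-1/x}$ is maximized at $x=1/2$, so $W_1=4e^{-2}$.

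\textbf{Geometry.} Boundedness and simple connectivity are immediate from the cross-cut description. The cusp is non-Lipschitz because $w(x)/x\to0$ at the tip.

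\textbf{Compactness.} I would split $E_\varepsilon$ into a tip part $\{x<\delta\}$ and the rest. On the rest, which is Lipschitz, Rellich's theorem applies. For the tip I would show
\[
\int_{\{x<\delta\}}u^2\le \eta(\delta)\,\|u\|^2_{H^1(E_\varepsilon)}\qquad\text{with }\eta(\delta)\to0 .
\]
Write $u=\bar u+\tilde u$ as in the proof of Proposition~\ref{prop:crosscut}.
\begin{itemize}
\item The fluctuation part is at most $(\ell(\delta)/\pi)^2\int|u_y|^2$, which is small.
\item For the average part, use $w=x^2w'$ and integrate by parts, with a cutoff near $x=\delta$. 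This Hardy-type identity gives
\[
\int_0^\delta w\bar u^2\le C\delta^2\int_0^{2\delta}w(\bar u^2+\bar u'^2),
\]
and \eqref{eq:crosscutEnergy} controls the right-hand side.
\end{itemize}
A standard diagonal argument then gives compactness. It also gives $\mu_1<\mu_2$ via Lemma~\ref{lem:regularEndpointSimple}.

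\textbf{Spectral gap.} Corollary~\ref{cor:thin} with $\varepsilon\le1/10$ gives $\lambda_1\le\mu_1$ and
\[
\lambda_2\ge \frac{\mu_2}{1+0.01\bigl(16e^{-4}/3+(4e^{-2}/\pi^2)\mu_2\bigr)} .
\]
So it suffices to prove two bounds.
\begin{itemize}
\item The upper bound $\mu_1<31.3$. I would get this from an explicit mean-zero trial function, for instance $f=x-c$ or $f=\cos(\pi x)-c$, adjusted to have zero weighted mean, and evaluate the Rayleigh quotient $\int w f'^2/\int w f^2$. Since $w$ is concentrated near $x=1$, the problem behaves like a short interval, and a piecewise-linear trial function concentrated on about $[1/2,1]$ should do.
\item The lower bound $\mu_2\ge 32.6$, roughly. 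This is the main obstacle.
\end{itemize}
For the lower bound on $\mu_2$, I would apply the Liouville transform $g=\sqrt w\,f$. This turns the problem into $-g''+qg=\mu g$ with
\[
q=\frac{1-4x}{4x^4}\ge-\frac{27}{4}\quad(\text{minimum at }x=1/3).
\]
The boundary condition becomes the Robin condition $g'(1)=g(1)/2$, and $g$ decays at $0$. By Sturm oscillation, the eigenfunction for $\mu_2$ has exactly two zeros. I would show that for $\mu\le32.6$ the solution shot from $x=1$ has at most one zero in $(0,1)$. To do this I would use a Prüfer-angle comparison with the constant potential $-27/4$ on $[1/3,1]$. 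On $(0,1/3)$, where $q$ becomes large and positive, I would show that no further oscillation occurs; positivity of $q-\mu$ near the tip forbids zeros there. The margins are tight: roughly $\mu+27/4\approx39$, with angular speed about $6.3$ over an interval of length $2/3$. If crude constant comparison is insufficient, I would subdivide $[0,1]$ and use piecewise-constant lower bounds for $q$.

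\textbf{No polynomial bound.} By the Sobolev criterion \eqref{eq:examplesSob}, it suffices to violate the $L^q$ embedding for every $q>2$. Take $u=f(x)$ with $f=1$ on $(0,s)$ and $f$ linear down to $0$ on $(s,s+s^2)$. Let $A(s)=|E_\varepsilon\cap\{x<s\}|$. Then $A(s+s^2)\le CA(s)$, so
\[
\|u\|_q^2\ge A(s)^{2/q},\qquad \|\nabla u\|_2^2+\|u\|_2^2\le C s^{-4}A(s).
\]
The ratio of the first quantity to the second is at least $c\,s^4A(s)^{2/q-1}$. This tends to infinity because $A(s)$ is exponentially small in $1/s$.
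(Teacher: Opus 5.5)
Most of your plan matches the paper. The reduction to Corollary~\ref{cor:thin} with $W_0=e^{-1}$, $W_1=4e^{-2}$ is the same. The trial function $x-c$ for $\mu_1$ is the paper's choice; its Rayleigh quotient is about $29.5$. The tip test functions ruling out every $L^q$ embedding are also the paper's. Your compactness proof via $w=x^2w'$ is a valid, self-contained substitute for the paper's citation of Burenkov's continuous-graph criterion. (The cutoff on $[\delta,2\delta]$ gives a factor of order $\delta$, not $\delta^2$, which is harmless.) The problem is the lower bound for $\mu_2$, which is the heart of the theorem.

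The claim you propose to prove is false: namely, that for every $\mu\le32.6$ the solution shot from $x=1$ has at most one zero in $(0,1)$. The endpoint $x=0$ is limit point and nonoscillatory. There, the number of zeros in $(0,1)$ of the solution with $g'(1)=g(1)/2$ equals the number of eigenvalues strictly below $\mu$. Away from eigenvalues, the shot solution has a nonzero component along the nonprincipal solution, which in the $f$ variable behaves like $\int_x^1w^{-1}\,dt$ at the tip. Each time $\mu$ crosses an eigenvalue, a new zero enters from $x=0$. Since $\mu_0=0$ and $\mu_1<31.3$, the shot solution has two zeros for every $\mu\in(\mu_1,32.6]$.

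Restricted to solutions decaying at $0$ (eigenfunctions), the claim is true and would suffice. Only there is ``$q-\mu>0$ forbids zeros'' valid; a nondecaying solution can have one zero where $q>\mu$. Even for eigenfunctions, your tip step does not reach $x=1/3$. The potential $q=(1-4x)/(4x^4)$ is negative on $(1/4,1/3)$, and $q>\mu$ only for $x$ below about $0.2$ when $\mu\approx32.6$. So the window $(0.2,1/3)$ needs its own phase estimate, and the oscillation theorem at the singular endpoint must be justified. Note also that $\mu_2\ge32.6$ is slightly short of what Corollary~\ref{cor:thin} needs for $\lambda_2>32$, which is about $32.604$.

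The paper needs no zero counting. From \eqref{eq:expSchrodinger} it keeps only
\[
 a[f]\ge\int_0^1|g'|^2\,dx-\tfrac12|g(1)|^2-\tfrac{27}{4}\|g\|_2^2
\]
on a transformed form domain contained in $\{g\in H^1(0,1):g(0)=0\}$. It then applies min--max against the explicit Dirichlet--Robin problem, whose eigenfunctions are $\sin kx$ with $\tan k=2k$ and $k_j\in(j\pi,j\pi+\pi/2)$. This gives $\mu_2\ge k_2^2-27/4>4\pi^2-27/4>32.7$, using only the crude bound $q\ge-27/4$ on the whole interval: the singular endpoint simply becomes a Dirichlet condition. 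In your oscillation language, the correct target is at most two zeros of the shot solution at $\mu=32.7$. Constant comparison on all of $(0,1)$ already gives that.
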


The proof combines a longitudinal spectral estimate with the cross-cut comparison. Compactness follows from the continuous-graph boundary criterion, while test functions near the tip rule out every polynomial heat-kernel bound.

\subsubsection{Geometry and compact embedding}
\paragraph{Geometry.} Near the tip, the domain lies to the right of the continuous graph
\[
 x=g_\varepsilon(y),\qquad
 g_\varepsilon(y)=\frac1{\log(\varepsilon/|y|)}\ (y\ne0),
 \qquad g_\varepsilon(0)=0,
\]
and away from the tip the boundary is Lipschitz. The opening angle at the tip tends to zero, so the cusp is not Lipschitz. The map $(x,t)\mapsto(x,\varepsilon e^{-1/x}t)$ is a homeomorphism of $(0,1)\times(-1,1)$ onto $E_\varepsilon$, hence $E_\varepsilon$ is bounded and simply connected.

\paragraph{Compact embedding.} $E_\varepsilon$ is a bounded domain whose boundary is locally the graph of a continuous function, and for such domains the embedding $H^1\hookrightarrow L^2$ is compact. See Burenkov \cite{Burenkov} and the introduction of \cite{BD}.

\subsubsection{The longitudinal spectral gap}
Set $w(x)=e^{-1/x}$ and consider the closed form
\[
 a[f]=\int_0^1 w|f'|^2\,dx\quad\text{in }L^2((0,1),w\,dx),
\]
with domain $\mathcal V=\{f\in H^1_{\mathrm{loc}}(0,1):\int_0^1w(|f|^2+|f'|^2)\,dx<\infty\}$. This is the longitudinal problem of Section~\ref{subsec:crosscutdef} for the profile $w$. Its compactness follows from the compact embedding for $E_1$ just established. Denote its eigenvalues by $0=\mu_0\le\mu_1\le\mu_2\le\cdots$.

\begin{lemma}\label{lem:expLongitudinal}
For the profile $w(x)=e^{-1/x}$, the longitudinal eigenvalues satisfy
\[
 \mu_1<31.3,\qquad \mu_2>4\pi^2-\frac{27}{4}>32.7 .
\]
\end{lemma}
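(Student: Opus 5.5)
The plan has two parts. For the lower bound on $\mu_2$ we use a Liouville (ground-state) transformation that turns the weighted problem into a Schr\"odinger problem on $(0,1)$ with an explicit bounded-below potential. For the upper bound on $\mu_1$ we use an explicit trial function. Throughout, $\mathcal V$ and $a[\cdot]$ are as defined just before the lemma. The compactness established for $E_1$ guarantees that the $\mu_k$ are eigenvalues, so min--max applies on both sides.

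\emph{Lower bound on $\mu_2$.} Put $\sigma=\sqrt w=e^{-1/(2x)}$ and $g=\sigma f$. A direct computation gives
\[
 \frac{\sigma'}{\sigma}=\frac1{2x^2},\qquad
 q:=\frac{\sigma''}{\sigma}=\frac1{4x^4}-\frac1{x^3}.
\]
For $f$ smooth on $(0,1]$ and vanishing near $0$, integration by parts gives
\[
 \int_0^1w|f'|^2\,dx=\int_0^1 g'^2\,dx+\int_0^1 q\,g^2\,dx-\tfrac12 g(1)^2,
 \qquad \int_0^1 wf^2\,dx=\int_0^1g^2\,dx .
\]
The boundary term at $1$ comes from $\sigma'(1)/\sigma(1)=\tfrac12$. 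Minimizing $q$ in $x$ gives $q'=0$ at $x=1/3$, where $q(1/3)=81/4-27=-27/4$; hence $q\ge-27/4$ on $(0,1)$. Consequently
\[
 a[f]\ge\int_0^1 g'^2\,dx-\tfrac12g(1)^2-\tfrac{27}4\int_0^1g^2\,dx .
\]
We will check that functions vanishing near $0$ are form-dense in $\mathcal V$. Granting this, min--max applies with $g$ ranging over $\{g\in H^1(0,1):g(0)=0\}$: near $0$ we have $g=\sigma f$ with $\sigma$ exponentially small, and $q\to+\infty$, which handles both the boundary term at $0$ and the density.

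It then suffices to compute the third eigenvalue of $-g''$ on $(0,1)$ with $g(0)=0$ and the Robin condition $g'(1)=\tfrac12 g(1)$, and to show it exceeds $4\pi^2$.
\begin{itemize}
\item There are no nonpositive eigenvalues. For zero, $g=x$ fails the Robin condition. For a negative eigenvalue, $g=\sinh(\kappa x)$ would require $\tanh\kappa=2\kappa$, which has no positive root.
\item The positive eigenvalues are $k^2$ with $g=\sin(kx)$ and $\tan k=2k$. Arguing as for $\tan k=k$ in Theorem~\ref{thm:quadCusp}, there is exactly one root in each of $(0,\pi/2)$, $(\pi,3\pi/2)$ and $(2\pi,5\pi/2)$.
\end{itemize}
So the third eigenvalue is $k_3^2>4\pi^2$. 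Since $\mu_2$ is the index-$2$ min--max value, this gives $\mu_2>4\pi^2-27/4>32.7$.

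\emph{Upper bound on $\mu_1$.} By min--max over the two-dimensional space spanned by $1$ and a trial function $\varphi$, we have $\mu_1\le a[\varphi]/\int w(\varphi-m)^2$, where $m=\int w\varphi/\int w$ is the weighted mean. Natural trial functions are a low-degree polynomial such as $\varphi(x)=x^2$ or $x^3$ (the weight $e^{-1/x}$ concentrates mass near $x=1$), or a function adapted to the transformed problem, such as $\varphi=\sigma^{-1}\sin(k_2x)$ cut off near $0$. One computes the resulting one-dimensional integrals against $e^{-1/x}$ and bounds them rigorously, for instance by monotone Riemann sums or by the substitution $s=1/x$, which gives incomplete-gamma and exponential-integral expressions. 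The parameters are tuned until the quotient falls below $31.3$.

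\emph{Expected obstacles.} The conceptual step is the justification of the Liouville transform in the form domain: the density of functions vanishing near the tip, and the vanishing of the boundary term at $0$. I would handle this with a cutoff argument based on $\int w(|f|^2+|f'|^2)<\infty$. The most laborious step is certifying the numerical value $31.3$ with explicit, checkable error bounds. The lower bound, by contrast, needs only the transcendental equation $\tan k=2k$ and the localization of its roots, which is elementary.
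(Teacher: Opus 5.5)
The gap is in the upper bound $\mu_1<31.3$. Your lower bound for $\mu_2$ follows the paper's argument: the substitution $g=\sqrt w\,f$, the bound $V=\frac1{4x^4}-\frac1{x^3}\ge-\frac{27}{4}$, and comparison with $-g''$ under $g(0)=0$, $g'(1)=\tfrac12 g(1)$, whose eigenvalues are $k^2$ with $\tan k=2k$.

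\textbf{The upper bound.} You never fix a trial function or certify a number, and the candidates you name do not work. Put $I_n=\int_0^1x^ne^{-1/x}\,dx$. Integration by parts gives $e^{-1}=(n+2)I_{n+1}+I_n$ and $I_0=e^{-1}-E_1(1)$. From these, $\varphi=x^2$ gives $4I_2/(I_4-I_2^2/I_0)\approx35.8$, and $\varphi=x^3$ gives $9I_4/(I_6-I_3^2/I_0)\approx45.4$. Both exceed $31.3$. The heuristic behind them is backwards. The first eigenfunction satisfies the natural condition $f'(1)=0$ where the weight is largest, while $x^n$ has slope $n$ there, so raising the degree makes the quotient worse. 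Your third candidate, $\sigma^{-1}\sin(kx)$, is not in $\mathcal V$. Because $\sin(kx)$ vanishes only linearly while $V\sim 1/(4x^4)$, the term $\int Vg^2$ grows like $1/r$ as the cutoff scale $r\downarrow0$, and you leave the cutoff unspecified. ``Tune until below $31.3$'' therefore does not prove the stated inequality.

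\textbf{What works.} The linear trial function $x-I_1/I_0$ gives $\mu_1\le I_0/(I_2-I_1^2/I_0)\approx29.5$. The denominator ($\approx0.0050$) is the difference of $I_2\approx0.0861$ and $I_1^2/I_0\approx0.0810$, so about four correct digits are needed. The paper uses $0.14840<I_0<0.1487$, $0.1096<I_1<0.1098$, $0.0860<I_2<0.0861$. These follow from $I_1=\tfrac12E_1(1)$, $I_2=\tfrac13(e^{-1}-I_1)$, the alternating series $E_1(1)=-\gamma-\sum_{m\ge1}(-1)^m/(m\,m!)$, and the bounds $0.5772<\gamma<0.5773$, $0.3678<e^{-1}<0.3679$. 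The worst case is $0.1487/(0.0860-0.1098^2/0.14840)<31.3$.

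\textbf{Two smaller repairs in your lower bound.} First, $(\tan k-2k)'=\tan^2k-1$ changes sign on $(j\pi,j\pi+\pi/2)$. So the monotonicity argument used for $\tan k=k$ does not give uniqueness of the root there. Use strict convexity on that interval, together with $\tan k\le0<2k$ on $(j\pi-\pi/2,j\pi]$. Second, a cutoff on $(r,2r)$ costs $r^{-2}\int_r^{2r}w|f|^2$, which is small only for bounded $f$, so truncate first, as the paper does. Alternatively, integrate by parts on $(\epsilon,1)$. This produces an extra term $+g(\epsilon)^2/(2\epsilon^2)$, which can be dropped. Finiteness of $\int_0^{1/8}g^2x^{-4}\,dx$ then forces $g(0)=0$, with no density argument needed.
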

\begin{proof}
\emph{Upper bound.} Write $I_j=\int_0^1x^je^{-1/x}\,dx$. The trial function $f(x)=x-I_1/I_0$ is orthogonal to constants in $L^2(w\,dx)$, and $\int_0^1w|f'|^2=I_0$, $\int_0^1wf^2=I_2-I_1^2/I_0$, so
\[
 \mu_1\le\frac{I_0}{I_2-I_1^2/I_0}.
\]
The following enclosures suffice:
\[
 0.14840<I_0<0.1487,\qquad
 0.1096<I_1<0.1098,\qquad
 0.0860<I_2<0.0861.
\]
They imply
\[
 \mu_1<\frac{0.1487}{0.0860-(0.1098)^2/0.14840}<31.3 .
\]
For a reproducible evaluation of the enclosures, integration by parts gives
\[
 I_0=e^{-1}-E_1(1),\qquad I_1=\tfrac12E_1(1),\qquad
 I_2=\tfrac13(e^{-1}-I_1),
\]
where
\[
 E_1(1)=\int_1^\infty\frac{e^{-t}}{t}\,dt=-\gamma-\sum_{m=1}^{\infty}\frac{(-1)^m}{m\,m!}
\]
and $\gamma$ is the Euler--Mascheroni constant. Only the bounds
\[
 0.5772<\gamma<0.5773,\qquad 0.3678<e^{-1}<0.3679
\]
are needed. To check the enclosures using rational arithmetic, put
\[
 S_{10}=-\sum_{m=1}^{10}\frac{(-1)^m}{m\,m!},
 \qquad R=\frac{1}{11\cdot11!}.
\]
The alternating-series estimate gives
\[
 S_{10}-0.5773<E_1(1)<S_{10}+R-0.5772.
\]
Substitution into the formulas for $I_0,I_1,I_2$, together with the displayed bounds for $e^{-1}$, yields the enclosures above. The tightest of them is the lower bound $I_0>0.3678-(S_{10}+R-0.5772)>0.14840$.

\emph{Transformation.} For the lower bound, put $g=\sqrt w\,f$. This map is unitary from $L^2(w\,dx)$ to $L^2(dx)$. Since $w'/w=x^{-2}$, one has $f'=w^{-1/2}\bigl(g'-g/(2x^2)\bigr)$, so for real $g$ smooth up to $x=1$ and vanishing near $0$,
\[
 w|f'|^2=\Bigl|g'-\frac{g}{2x^2}\Bigr|^2=|g'|^2-\frac{gg'}{x^2}+\frac{|g|^2}{4x^4},
 \qquad
 \int_0^1\frac{gg'}{x^2}\,dx=\frac{|g(1)|^2}{2}+\int_0^1\frac{|g|^2}{x^3}\,dx,
\]
the second identity by integration by parts, since $(1/(2x^2))'=-x^{-3}$ and the boundary term at $0$ vanishes. Integrating the first identity gives
\begin{equation}\label{eq:expSchrodinger}
 a[f]=\int_0^1\bigl(|g'|^2+V|g|^2\bigr)\,dx
       -\frac12|g(1)|^2,
 \qquad V(x)=\frac1{4x^4}-\frac1{x^3}\ge-\frac{27}{4}.
\end{equation}
The last bound follows by minimizing $t^4/4-t^3$ over $t=1/x\ge1$. The minimum occurs at $t=3$.

\emph{Form domain.} We justify \eqref{eq:expSchrodinger}, as an inequality, on the whole form domain. First, truncating $f\in\mathcal V$ to bounded functions gives convergence in the weighted form norm. A bounded function can then be cut off on $(0,r)$, with transition on $(r,2r)$: the added derivative cost is bounded by $C\|f\|_\infty^2r^{-2}\int_r^{2r}w(x)\,dx\to0$, and the contribution of $(0,2r)$ to $\int w(|f|^2+|f'|^2)$ tends to zero by integrability. Smooth approximation away from zero therefore produces a form core of functions that are smooth up to $x=1$ and vanish near zero, on which \eqref{eq:expSchrodinger} holds. The trace inequality
\[
 |g(1)|^2\le\eta\|g'\|_2^2+C_\eta\|g\|_2^2
\]
and the lower bound for $V$ show that the weighted form norm controls $\|g\|_{H^1}$. Passing to the closure, every transformed form-domain function belongs to $H^1(0,1)$ with $g(0)=0$, and
\[
 a[f]\ge\int_0^1|g'|^2\,dx-\tfrac12|g(1)|^2
                         -\tfrac{27}{4}\|g\|_2^2.
\]
This inequality is all that is needed. No additional endpoint condition at zero is imposed on $f$.

\emph{Lower bound.} The comparison form $b[g]=\int_0^1|g'|^2-\tfrac12|g(1)|^2$ on $\{g\in H^1(0,1):g(0)=0\}$ has the boundary conditions $g(0)=0$, $g'(1)=g(1)/2$. It is strictly positive, since $|g(1)|^2\le\int_0^1|g'|^2$. Write its eigenvalues as $0<\nu_0<\nu_1<\cdots$. Its eigenfunctions are $\sin(kx)$ with $\tan k=2k$ and eigenvalues $\nu_j=k_j^2$, where $k_j$ is the unique root of $\tan k=2k$ in $(j\pi,j\pi+\pi/2)$, $j=0,1,2,\ldots$. To verify uniqueness, note that $\tan k-2k$ is strictly convex on each $(j\pi,j\pi+\pi/2)$. For $j\ge1$ it is negative at the left endpoint and tends to $+\infty$ at the right endpoint, so it has exactly one root. For $j=0$ it vanishes at zero and has derivative $-1$ there, so it again has exactly one root in the open interval. The value $k=0$ yields no nonzero eigenfunction. On $(j\pi-\pi/2,j\pi)$ it is negative for $j\ge1$. In particular $\nu_2>4\pi^2$. The min--max principle, which remains valid for the possibly smaller transformed form domain, gives $\mu_2\ge\nu_2-27/4>4\pi^2-27/4$.
\end{proof}
Numerically, $\mu_1\approx24.37$ and $\mu_2\approx76.19$. These values are not used in the proof.

\subsubsection{Proof of Theorem~\ref{thm:explicitExpCusp}}
\begin{proof}[Proof of Theorem~\ref{thm:explicitExpCusp}]
\emph{Step 1: no polynomial heat-kernel bound.} Fix $\varepsilon>0$. Let $u_r$ depend only on $x$, equal one for $x\le r$, decrease linearly to zero on $(r,r+r^2)$, and vanish beyond. The substitution $s=1/x$ gives
\begin{equation}\label{eq:tipMass}
 \int_0^r e^{-1/x}\,dx=\int_{1/r}^{\infty}e^{-s}s^{-2}\,ds
 \asymp r^2e^{-1/r}.
\end{equation}
The upper bound follows from $s^{-2}\le r^2$ on $s\ge1/r$, and the lower bound from integration over $[1/r,1/r+1]$. On $(r,r+r^2)$ the weight $e^{-1/x}$ lies between $e^{-1/r}$ and $e\,e^{-1/r}$, and the squared gradient of $u_r$ equals $r^{-4}$ on this strip, whose area is $O_\varepsilon(r^2e^{-1/r})$. Thus, for every $q>2$,
\[
 \|u_r\|_{L^q(E_\varepsilon)}^2
 \ge c_{\varepsilon,q}\,r^{4/q}e^{-2/(qr)},\qquad
 \|u_r\|_{H^1(E_\varepsilon)}^2
 \le C_\varepsilon\, r^{-2}e^{-1/r},
\]
and therefore
\[
 \frac{\|u_r\|_q^2}{\|u_r\|_{H^1}^2}
 \ge c_{\varepsilon,q}\,r^{2+4/q}
       e^{(1-2/q)/r}\longrightarrow\infty\qquad(r\downarrow0).
\]
Hence no continuous embedding of $H^1(E_\varepsilon)$ into any $L^q$, $q>2$, holds, so \eqref{eq:examplesSob} fails for every $q>2$. By the equivalence recorded in Section~\ref{sec:examples} (see \cite[Proposition 3]{BD}) and by \eqref{eq:standardSob}, every polynomial heat-kernel bound with $N>2$ would imply \eqref{eq:examplesSob}, so all such bounds are excluded. A bound with exponent at most two would imply one with a larger exponent and is excluded as well.

\emph{Step 2: the upper bound $\lambda_1(E_\varepsilon)<31.3$.} The domain $E_\varepsilon$ is the thin-profile domain $G^w_\varepsilon$ with $w(x)=e^{-1/x}$, $W_0=e^{-1}$ and $W_1=\sup_{0<x<1}e^{-1/x}x^{-2}=4e^{-2}$ (attained at $x=1/2$). By the upper bound in \eqref{eq:thin} and Lemma~\ref{lem:expLongitudinal}, $\lambda_1(E_\varepsilon)\le\mu_1<31.3$.

\emph{Step 3: the lower bound $\lambda_2(E_\varepsilon)>32$.}
The profile constants satisfy
\[
 \frac{W_1^2}{3}=\frac{16e^{-4}}3<0.1,
 \qquad
 \frac{4W_0^2}{\pi^2}=\frac{4}{e^2\pi^2}<0.055.
\]
For fixed $\varepsilon>0$, the function
\[
 \mu\longmapsto
 \frac{\mu}{1+\varepsilon^2\bigl(W_1^2/3+(4W_0^2/\pi^2)\mu\bigr)}
\]
is increasing. Hence \eqref{eq:thin} and Lemma~\ref{lem:expLongitudinal} give
\[
 \lambda_2(E_\varepsilon)>
 \frac{32.7}{1+\varepsilon^2(0.1+0.055\cdot32.7)}.
\]
If $0<\varepsilon\le1/10$, the denominator is less than $1.02$, so
\[
 \lambda_2(E_\varepsilon)>\frac{32.7}{1.02}>32.
\]
Combining this with Step~2 gives
\[
 \lambda_1(E_\varepsilon)<31.3<32<\lambda_2(E_\varepsilon),
 \qquad 0<\varepsilon\le1/10,
\]
which establishes simplicity and completes the proof.
\end{proof}

\begin{remark}[Scope of the numerical bounds]\label{rem:numerics}
The proofs of Theorems~\ref{thm:quadCusp} and~\ref{thm:explicitExpCusp} use explicit scalar enclosures, not numerical eigenvalue approximations. The ranges $\varepsilon\le1/5$ and $\varepsilon\le1/10$ are chosen to keep the enclosures and the final spectral gaps simple. They are sufficient and are not asserted to be optimal.
\end{remark}

\subsection{Summary of the examples}\label{subsec:summary}
Table~\ref{tab:examples} collects the three families and the bounds proved for them.

\begin{table}[ht]
\centering\small
\begin{tabular}{@{}p{0.28\textwidth}p{0.25\textwidth}p{0.39\textwidth}@{}}
\hline
Domain and range & Heat-kernel bound & Proved spectral gap\\
\hline
$T_h$, $\kappa<\sqrt2$ & $Ct^{-1}$ & $\lambda_1\le\pi^2/4<\pi^2/(2+\kappa^2)\le\lambda_2$\\[3pt]
$Q_\varepsilon$, $0<\varepsilon\le1/5$ & $C_\varepsilon t^{-3/2}$ & $\lambda_1<22.21<23.3<\lambda_2$\\[3pt]
$E_\varepsilon$, $0<\varepsilon\le1/10$ & No polynomial bound & $\lambda_1<31.3<32<\lambda_2$\\
\hline
\end{tabular}
\caption{The explicit examples of Section~\ref{sec:simplicity}. All three are bounded, simply connected, have compact Sobolev embedding, and have a simple first positive Neumann eigenvalue.}
\label{tab:examples}
\end{table}

\begin{corollary}\label{cor:allExamples}
Assume Hypothesis~\ref{hyp:smooth}. The hot-spots conclusions \eqref{eq:HS} and, for eigenfunctions continuous up to the boundary, \eqref{eq:boundary} hold for the Lipschitz tubes $T_h$ with $\|h'\|_\infty<\sqrt2$, for the quadratic cusps $Q_\varepsilon$ with $0<\varepsilon\le1/5$, and for the exponential cusps $E_\varepsilon$ with $0<\varepsilon\le1/10$, for example for
\[
 \{(x,y):0<x<1,\ |y|<10^{-2}e^{-1/x}\}.
\]
The last family satisfies compact embedding and simplicity but lies outside every class defined by a polynomial Neumann heat-kernel upper bound. Hence for it Theorem~\ref{thm:main} applies, whereas Corollary~\ref{cor:HKhot} does not.
\end{corollary}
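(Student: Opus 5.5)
The plan is to verify, for each of the three families, the hypotheses of Theorem~\ref{thm:main}: simple connectivity, finite area, the compact embedding \eqref{eq:CI}, and simplicity of $\lambda_1$. Then \eqref{eq:HS} follows directly, and \eqref{eq:boundary} follows for eigenfunctions in $C(\overline\Om)$ because all three domains are bounded.

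For the hypotheses I would cite earlier results rather than reprove anything.
\begin{itemize}
\item For $T_h$ with $\|h'\|_\infty<\sqrt2$, Proposition~\ref{prop:tubes} gives a bounded simply connected Lipschitz domain with compact embedding and $\lambda_1(T_h)<\lambda_2(T_h)$.
\item For $Q_\varepsilon$ with $0<\varepsilon\le1/5$, Theorem~\ref{thm:quadCusp} gives the same list of properties.
\item For $E_\varepsilon$ with $0<\varepsilon\le1/10$, Theorem~\ref{thm:explicitExpCusp} gives them as well.
\end{itemize}
Bounded domains have finite area, so all hypotheses of Theorem~\ref{thm:main} hold. Invoking it with Hypothesis~\ref{hyp:smooth} yields \eqref{eq:HS} for every real eigenfunction of $\lambda_1$, and the second part of the theorem yields \eqref{eq:boundary}. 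The displayed example $\{0<x<1,\ |y|<10^{-2}e^{-1/x}\}$ is $E_{1/100}$, and $1/100\le1/10$, so it is covered.

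For the final assertion about the exponential family, Theorem~\ref{thm:explicitExpCusp} states that no bound $p_{E_\varepsilon}\le Ct^{-N/2}$ holds for any finite $C,N$. Hence $E_\varepsilon$ fails the hypothesis \eqref{eq:HK} of Corollary~\ref{cor:HKhot}, and of Proposition~\ref{prop:HK}, for every exponent $N>2$. So that corollary does not apply, while Theorem~\ref{thm:main} does.

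There is no real obstacle here. The only point needing care is that the phrase ``lies outside every class defined by a polynomial upper bound'' is read as the failure of \eqref{eq:HK} for all $N$. Theorem~\ref{thm:explicitExpCusp} gives exactly this, including small exponents $N\le 2$, since such a bound would imply one with a larger exponent.
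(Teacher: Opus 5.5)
Your proposal is correct and matches the paper's proof. Like the paper, you cite Proposition~\ref{prop:tubes}, Theorem~\ref{thm:quadCusp} and Theorem~\ref{thm:explicitExpCusp} for boundedness, simple connectivity, compact embedding and simplicity, and then apply Theorem~\ref{thm:main}. Your further checks, that the displayed domain is $E_{1/100}$ and that \eqref{eq:HK} fails for every exponent by Theorem~\ref{thm:explicitExpCusp}, correctly justify the last sentence of the statement, which the paper's one-line proof leaves implicit.
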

\begin{proof}
Each family consists of bounded simply connected domains with compact embedding and a simple first positive eigenvalue (Proposition~\ref{prop:tubes} and Theorems~\ref{thm:quadCusp} and~\ref{thm:explicitExpCusp}), so Theorem~\ref{thm:main} applies.
\end{proof}

\section{Unbounded domains of finite area}\label{sec:unboundedFiniteArea}

Theorem~\ref{thm:main} allows unbounded domains of finite area. This permits examples whose boundary recedes to infinity while the cross-cuts shrink rapidly. The Gaussian horn below has compact Sobolev embedding and, for an explicit range of widths, a simple first positive Neumann eigenvalue. Sufficiently thin horns with profile $e^{-x^\alpha}$, $\alpha\ge2$, have the same properties (Remark~\ref{rem:otherHorns}). None of these domains admits a polynomial Neumann heat-kernel bound (Remark~\ref{rem:hornNoPolynomial}). The final subsection gives a different unbounded example, consisting of square rooms joined by rectangular passages, for which compactness and simplicity follow from elementary estimates.

\subsection{The Gaussian horn}
Figure~\ref{fig:gaussian-horn} shows the domain considered in the following theorem.

\begin{theorem}[The Gaussian horn]\label{thm:GaussianHorn}
For $\varepsilon>0$, let
\[
 \Horn_\varepsilon=\{(x,y):x>0,\ |y|<\varepsilon e^{-x^2}\}.
\]
Then $\Horn_\varepsilon$ is unbounded and simply connected,
$|\Horn_\varepsilon|=\varepsilon\sqrt\pi$, and
$H^1(\Horn_\varepsilon)\hookrightarrow L^2(\Horn_\varepsilon)$ is compact.
Its first two positive Neumann eigenvalues satisfy
\begin{equation}\label{eq:GaussianGap}
 \lambda_1(\Horn_\varepsilon)\le4,
 \qquad
 \lambda_2(\Horn_\varepsilon)
 \ge\frac{8}{1+\varepsilon^2\bigl(2/(3e)+32/\pi^2\bigr)}.
\end{equation}
In particular, $\lambda_1(\Horn_\varepsilon)$ is simple whenever $0<\varepsilon\le\tfrac12$.
\end{theorem}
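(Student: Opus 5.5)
The plan is to treat $\Horn_\varepsilon$ as a cross-cut domain over the half-line. I take $a=-\varepsilon w$, $b=\varepsilon w$, $w(x)=e^{-x^2}$, $x\in(0,\infty)$, and combine Proposition~\ref{prop:crosscut} with an explicit solution of the longitudinal problem.

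\emph{Elementary facts.} The map $(x,t)\mapsto(x,\varepsilon e^{-x^2}t)$ is a homeomorphism of $(0,\infty)\times(-1,1)$ onto $\Horn_\varepsilon$, so the domain is simply connected. It is clearly unbounded, and $|\Horn_\varepsilon|=2\varepsilon\int_0^\infty e^{-x^2}dx=\varepsilon\sqrt\pi$.

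\emph{Cross-cut comparison on a half-line.} Definition~\ref{def:crosscut} is stated for finite $L$, but the proof of Proposition~\ref{prop:crosscut} never uses finiteness of $L$. It uses only $\ell_*<\infty$, $\kappa_*<\infty$, slicing on compact subintervals, and monotone exhaustion. I will state this explicitly and check that nothing else changes. Here $\ell_*=2\varepsilon$. The slope bound is $W_1=\sup_{x>0}2xe^{-x^2}=\sqrt{2/e}$, attained at $x=1/\sqrt2$, so $\kappa_*^2=\varepsilon^2W_1^2/3=2\varepsilon^2/(3e)$. Also $c=4\varepsilon^2/\pi^2$. Hence \eqref{eq:thin} holds verbatim with these constants.

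\emph{Longitudinal problem.} The longitudinal problem is the form $\int_0^\infty e^{-x^2}|f'|^2$ in $L^2(e^{-x^2}dx)$ on $(0,\infty)$. This is the Hermite (Ornstein--Uhlenbeck) operator $-f''+2xf'$ with the natural Neumann condition $f'(0)=0$. I claim $\mu_k=4k$, with eigenfunctions the even Hermite polynomials $H_{2k}$.

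To prove this, extend $f$ evenly to $\R$. Then the form is the restriction of the full Gaussian Dirichlet form on $\R$ to even functions, whose spectrum is $\{2n\}$ with eigenfunctions $H_n$. I will use completeness of Hermite polynomials in $L^2(e^{-x^2}dx)$, e.g.\ through the unitary map $g=e^{-x^2/2}f$ to the harmonic oscillator. Then $\mu_1=4$ comes from $H_2=4x^2-2$, and $\mu_2=8$.

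\emph{The bounds \eqref{eq:GaussianGap} and simplicity.} The bound $\lambda_1\le\mu_1=4$ is the upper bound in \eqref{eq:crosscut}, realized directly by the trial function $u(x,y)=4x^2-2$. The lower bound on $\lambda_2$ follows by inserting $\mu_2=8$ into the increasing function $\mu\mapsto\mu/(1+\varepsilon^2(2/(3e)+(4/\pi^2)\mu))$. For $\varepsilon\le1/2$ the denominator is below $1.88$, so $\lambda_2>4.2>4\ge\lambda_1$. By the remark after Corollary~\ref{cor:crosscutSimple}, this gives simplicity once compactness is known.

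\emph{Compact embedding (main obstacle).} I expect compactness to be the hardest step. The plan is to show that for a bounded set in $H^1(\Horn_\varepsilon)$ the $L^2$ mass in the tail $\{x>R\}$ is uniformly small, and then use Rellich compactness on the bounded Lipschitz truncation $\{x<R+1\}$.

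For the tail, split $u=\bar u+\tilde u$ on cross-cuts as in the proof of Proposition~\ref{prop:crosscut}.
\begin{itemize}
\item The fluctuation satisfies $\int_{x>R}|\tilde u|^2\le(2\varepsilon e^{-R^2}/\pi)^2\|u_y\|^2$, which is small.
\item For the average $f=\bar u$ I use the weighted Hardy identity coming from $w'=-2xw$:
\[
 \int_R^\infty 2xwf^2=w(R)f(R)^2+\int_R^\infty 2wff'.
\]
This gives $2R\int_R^\infty wf^2\le w(R)f(R)^2+2(\int wf^2)^{1/2}(\int wf'^2)^{1/2}$, so the tail is $O(1/R)$ once $w(R)f(R)^2$ is controlled.
\item The term $w(R)f(R)^2$ can be averaged over $R\in[R_0,R_0+1]$ and bounded by $\int_{R_0}^{R_0+1}wf^2$, so it is controlled by the $H^1$ norm.
\end{itemize}
Hence $\sup\int_{x>R}|u|^2\to0$ uniformly over the unit ball of $H^1$. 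A diagonal argument then gives compactness.
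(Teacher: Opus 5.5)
Your proposal is correct and follows the paper's proof: the same cross-cut average/fluctuation splitting, a uniform $L^2$ tail bound combined with Rellich compactness on truncations, the Hermite spectrum $\mu_k=4k$ via the ground-state map and even reflection, and the comparison bound evaluated at $\mu_1=4$, $\mu_2=8$. The differences are minor. First, you apply Proposition~\ref{prop:crosscut} directly, correctly observing that finiteness of $L$ is never used, so the comparison needs no compactness; the paper instead re-derives it on the half-line from spans of planar eigenfunctions. Second, your first-order Hardy estimate from $w'=-2xw$ yields an $O(1/R)$ tail, whereas the paper's substitution $g=\sqrt w\,f$ gives the sharper $O(1/R^2)$ bound \eqref{eq:GaussianTail}. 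State that estimate as an inequality by dropping the nonpositive term $-w(S)f(S)^2$, and use monotonicity of $R\mapsto\int_R^\infty wf^2$ in the averaging step.
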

\begin{figure}[htbp]
\centering
\begin{tikzpicture}[x=1cm,y=1cm,font=\small,line cap=round,line join=round]
\fill[black!8] (0,-1) -- (0,1)
 plot[domain=0:2.8,samples=100] ({2.65*\x},{exp(-\x*\x)})
 -- plot[domain=2.8:0,samples=100] ({2.65*\x},{-exp(-\x*\x)}) -- cycle;
\draw[thick] (0,-1)--(0,1);
\draw[thick] plot[domain=0:2.25,samples=90] ({2.65*\x},{exp(-\x*\x)});
\draw[thick] plot[domain=0:2.25,samples=90] ({2.65*\x},{-exp(-\x*\x)});
\draw[thick,densely dashed] plot[domain=2.25:2.8,samples=30] ({2.65*\x},{exp(-\x*\x)});
\draw[thick,densely dashed] plot[domain=2.25:2.8,samples=30] ({2.65*\x},{-exp(-\x*\x)});
\draw[->,black!50] (-.2,0)--(8.3,0) node[right,text=black] {$x$};
\node[below left] at (0,0) {$0$};
\node[above left] at (0,1) {$\varepsilon$};
\node[below left] at (0,-1) {$-\varepsilon$};
\node at (1.15,0.05) {$\mathcal G_\varepsilon$};
\node[anchor=west] at (3.1,1.05) {$y=\varepsilon e^{-x^2}$};
\draw[thin] (3.2,.85)--(2.65,.368);
\node[anchor=west] at (3.1,-1.05) {$y=-\varepsilon e^{-x^2}$};
\draw[thin] (3.2,-.85)--(2.65,-.368);
\node at (7.75,.55) {$x\to\infty$};
\end{tikzpicture}
\caption{The Gaussian horn $\mathcal G_\varepsilon=\{(x,y):x>0,\ |y|<\varepsilon e^{-x^2}\}$. Dashed curves indicate continuation to infinity.}
\label{fig:gaussian-horn}
\end{figure}

\begin{proof}[Proof of Theorem~\ref{thm:GaussianHorn}]
Write $w(x)=e^{-x^2}$. Note that $w$ is decreasing on $(0,\infty)$. The map
$(x,t)\mapsto(x,\varepsilon w(x)t)$ is a homeomorphism from
$(0,\infty)\times(-1,1)$ onto $\Horn_\varepsilon$, proving simple
connectivity. Integration gives the stated area.

\emph{Cross-cut estimates.}
For $u\in H^1(\Horn_\varepsilon)$, define its cross-cut average, as in Section~\ref{sec:simplicity}, by
\[
 \bar u(x)=\frac12\int_{-1}^1u(x,\varepsilon w(x)t)\,dt.
\]
Put $A=\int_{\Horn_\varepsilon}|u_x|^2$, $B=\int_{\Horn_\varepsilon}|u_y|^2$,
$W_0=\sup w=1$, and $W_1=\sup|w'|=\sqrt{2/e}$.
Jensen's inequality gives \eqref{eq:hornAvgNorm}, and differentiation of the average with the Cauchy--Schwarz inequality gives \eqref{eq:hornAvgEnergy}:
\begin{align}
 2\varepsilon\int_0^\infty w|\bar u|^2\,dx
     &\le\|u\|_2^2,\label{eq:hornAvgNorm}\\
 2\varepsilon\int_0^\infty w|\bar u'|^2\,dx
     &\le\left(\sqrt A+
           \frac{\varepsilon W_1}{\sqrt3}\sqrt B\right)^2
      \le(1+\varepsilon^2W_1^2/3)(A+B).
       \label{eq:hornAvgEnergy}
\end{align}
The constant $1/3$ is the average of $t^2$ on $(-1,1)$.
The Neumann Poincar\'e inequality on each cross-cut, whose length $2\varepsilon w(x)$ is at most $2\varepsilon w(R)$ for $x>R$ because $w$ is decreasing, gives
\begin{equation}\label{eq:hornFluctuation}
 \int_{\{x>R\}\cap \Horn_\varepsilon}|u-\bar u(x)|^2
 \le\frac{4\varepsilon^2w(R)^2}{\pi^2}
       \int_{\{x>R\}\cap \Horn_\varepsilon}|u_y|^2,
 \qquad R\ge0.
\end{equation}
These statements follow first on finite truncations, where the change
of coordinates is regular, and then by Sobolev approximation and
monotone convergence. On each cross-cut, the average and fluctuation
are orthogonal in $L^2$.

\emph{Compact embedding.}
Let $\mathcal V_w=\{f\in H^1_{\mathrm{loc}}(0,\infty):\int_0^\infty w(|f|^2+|f'|^2)\,dx<\infty\}$ be the weighted form domain. The unitary substitution $g=\sqrt w\,f=e^{-x^2/2}f$ gives
\begin{equation}\label{eq:hornOscillatorForm}
 \int_0^\infty w|f'|^2\,dx
 =\int_0^\infty\bigl(|g'|^2+(x^2-1)|g|^2\bigr)\,dx.
\end{equation}
For smooth functions of bounded support this is integration by parts, since $w f'^2=(g'+xg)^2=g'^2+x(g^2)'+x^2g^2$ and the boundary term $xg^2$ vanishes at $0$ and beyond the support. Here $C_c^\infty([0,\infty))$ denotes functions smooth up to zero and of compact support in the half-line. No vanishing condition is imposed at zero. These functions are dense in $\mathcal V_w$ (multiply by $\chi(x/n)$ for a cutoff $\chi$ and mollify) and in $\{g\in H^1(0,\infty):xg\in L^2\}$, and the map $f\mapsto\sqrt w\,f$ carries the first space onto the second, so the identity extends to $\mathcal V_w$. Equivalently, the identity with $\int|g|^2$ added to both sides identifies the two completed form norms. In particular, since $x^2\ge R^2$ on $(R,\infty)$,
\[
 \int_R^\infty w|f|^2\,dx
 \le\frac1{R^2}\int_0^\infty x^2|g|^2\,dx
 \le\frac1{R^2}\int_0^\infty w(|f'|^2+|f|^2)\,dx.
\]
By orthogonality, \eqref{eq:hornAvgNorm}--\eqref{eq:hornFluctuation}
imply
\begin{equation}\label{eq:GaussianTail}
 \int_{\{x>R\}\cap \Horn_\varepsilon}|u|^2
 \le \frac{\|u\|_2^2+(1+2\varepsilon^2/(3e))\|\nabla u\|_2^2}{R^2}
       +\frac{4\varepsilon^2e^{-2R^2}}{\pi^2}\|u_y\|_2^2,\qquad R>0.
\end{equation}
Thus the $L^2$ tails tend to zero uniformly on bounded subsets of
$H^1(\Horn_\varepsilon)$. Every finite truncation is a bounded Lipschitz
domain. Local Rellich compactness, a diagonal subsequence, and
\eqref{eq:GaussianTail} prove global compactness.

\emph{The longitudinal spectrum.}
Consider the form $\int_0^\infty w|f'|^2$ in $L^2(w\,dx)$ with form domain $\mathcal V_w$. No trace restriction is imposed on the form domain at $x=0$. The associated operator has the natural Neumann condition there. Its differential expression is
\[
 Lf=-f''+2xf'.
\]
Under \eqref{eq:hornOscillatorForm}, it is the half-line harmonic
oscillator $-g''+(x^2-1)g$ with Neumann condition at $0$: neither form domain imposes a condition at $0$, and for smooth $f$ one has $g'(0)=f'(0)$ because $(\sqrt w)'(0)=0$.
Even reflection identifies this with the even subspace of the
full-line harmonic oscillator. The Hermite spectrum is therefore
\[
 \mu_k=4k,\qquad k=0,1,2,\ldots,
\]
with eigenfunctions the even Hermite polynomials. In particular, $1$, $x^2-\tfrac12$, and $x^4-3x^2+\tfrac34$ are the first three weighted eigenfunctions, as one checks by applying $L$.

\emph{Spectral comparison and simplicity.}
Here the cross-cut comparison can be justified directly on the
half-line. Put
\[
 a=1+\varepsilon^2W_1^2/3,
 \qquad c=4\varepsilon^2W_0^2/\pi^2.
\]
Trial functions depending only on $x$ give $\lambda_k\le\mu_k$.
For the lower bound, suppose first that $c\lambda_k<1$ and average
the span of the first $k+1$ planar eigenfunctions. For every $u$ in
that span, orthogonality and \eqref{eq:hornFluctuation} give
\[
 2\varepsilon\int w|\bar u|^2
 =\|u\|_2^2-\|u-\bar u\|_2^2
 \ge(1-c\lambda_k)\|u\|_2^2.
\]
The averaging map is consequently injective on that span.
Equation~\eqref{eq:hornAvgEnergy} and the weighted min--max principle
yield $\mu_k\le a\lambda_k/(1-c\lambda_k)$, or
\begin{equation}\label{eq:hornComparison}
 \frac{\mu_k}{1+\varepsilon^2(W_1^2/3+4W_0^2\mu_k/\pi^2)}
 \le\lambda_k\le\mu_k.
\end{equation}
If $c\lambda_k\ge1$, the same lower bound is immediate since its
left-hand side is less than $1/c$.
Substituting $\mu_1=4$, $\mu_2=8$ proves
\eqref{eq:GaussianGap}. Finally,
\[
 \frac{2}{3e}+\frac{32}{\pi^2}<4.
\]
For example, $e>2$ and $\pi>3$ bound the left-hand side by
$1/3+32/9=35/9<4$. For $\varepsilon\le1/2$ the denominator
in \eqref{eq:GaussianGap} is strictly less than $2$, so
$\lambda_1\le4<\lambda_2$.
\end{proof}

\begin{corollary}\label{cor:horn}
Assume Hypothesis~\ref{hyp:smooth}. For $0<\varepsilon\le\tfrac12$, every real first positive Neumann eigenfunction of $\Horn_\varepsilon$ satisfies \eqref{eq:HS}.
\end{corollary}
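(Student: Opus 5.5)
The plan is to deduce the corollary directly from Theorem~\ref{thm:main}, checking each of its hypotheses for $\Horn_\varepsilon$ with $0<\varepsilon\le\tfrac12$ from Theorem~\ref{thm:GaussianHorn}.

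\emph{Hypotheses of Theorem~\ref{thm:main}.} Theorem~\ref{thm:GaussianHorn} gives the following:
\begin{itemize}
\item $\Horn_\varepsilon$ is a simply connected domain.
\item It has finite area, $|\Horn_\varepsilon|=\varepsilon\sqrt\pi<\infty$.
\item The embedding \eqref{eq:CI} is compact.
\item $\lambda_1(\Horn_\varepsilon)\le4<\lambda_2(\Horn_\varepsilon)$ for $\varepsilon\le\tfrac12$, by \eqref{eq:GaussianGap}.
\end{itemize}
The displayed bound places $\lambda_1$ strictly below $\lambda_2$. By the discussion in Section~\ref{sec:main}, this strict inequality is exactly simplicity of the first positive eigenvalue.

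\emph{Applying the theorem.} Theorem~\ref{thm:main} does not require boundedness; it is stated for finite area, with infinite extremal values allowed. Under Hypothesis~\ref{hyp:smooth}, it therefore yields \eqref{eq:HS} for every real eigenfunction belonging to $\lambda_1(\Horn_\varepsilon)$. The second part of the theorem, \eqref{eq:boundary}, is not claimed here, since $\Horn_\varepsilon$ is unbounded.

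\emph{Remaining checks.} There is no real obstacle. The only points to verify are that unboundedness is harmless and that the strict gap holds on the whole range $0<\varepsilon\le\tfrac12$.
\begin{itemize}
\item \emph{Unboundedness.} Finite area is the only hypothesis used in the proofs of Lemma~\ref{lem:anchor}, Proposition~\ref{prop:spectral} and Lemma~\ref{lem:superlevel}. In particular, compactness of $K_\delta$ in Lemma~\ref{lem:superlevel} was derived from finite area alone.
\item \emph{The gap.} This was already checked at the end of the proof of Theorem~\ref{thm:GaussianHorn}: the denominator in \eqref{eq:GaussianGap} is less than $2$, so $\lambda_2>4\ge\lambda_1$.
\end{itemize}
It may also be worth remarking, as in Remark~\ref{rem:scopeMain}, that one identity in \eqref{eq:HS} may hold trivially along the thin end of the horn.
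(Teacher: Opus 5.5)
Your proposal is correct and is the paper's argument: the paper's proof is the single observation that Theorem~\ref{thm:GaussianHorn} supplies every hypothesis of Theorem~\ref{thm:main}, namely simple connectivity, finite area, compact embedding, and $\lambda_1\le4<\lambda_2$ for $\varepsilon\le\tfrac12$. Your extra checks are harmless but redundant, since Theorem~\ref{thm:main} is already stated for finite-area domains; note also that the proofs you cite use compact embedding and simple connectivity as well, so ``finite area is the only hypothesis used'' should read ``finite area, not boundedness, is what is used.''
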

\begin{proof}
By Theorem~\ref{thm:GaussianHorn}, $\Horn_\varepsilon$ satisfies the hypotheses of Theorem~\ref{thm:main}.
\end{proof}

\begin{remark}[Extrema approached at infinity]\label{rem:hornVacuous}
For every $\delta>0$ the layer $B_\delta(\Horn_\varepsilon)$ contains the whole tail $\{x>R\}\cap\Horn_\varepsilon$ for $R$ sufficiently large, because the cross-cuts have length $2\varepsilon e^{-x^2}$. If a supremum or infimum is approached along that tail, the corresponding identity in \eqref{eq:HS} is automatic. No assertion about growth or boundedness of the planar eigenfunction at infinity follows from the longitudinal comparison used here.
\end{remark}

\begin{remark}[Other horns]\label{rem:otherHorns}
The same argument applies to
\[
 \Horn_{\varepsilon,\alpha}
 =\{(x,y):x>0,\ |y|<\varepsilon e^{-x^\alpha}\},
 \qquad \alpha\ge2.
\]
The longitudinal substitution $g=e^{-x^\alpha/2}f$ gives the potential
\[
 V_\alpha(x)=\frac{\alpha^2}{4}x^{2\alpha-2}
             -\frac{\alpha(\alpha-1)}2x^{\alpha-2},
\]
which is bounded below and tends to infinity (for $1<\alpha<2$ it would tend to $-\infty$ at $0$, which is why $\alpha\ge2$ is assumed). The resulting weighted
tail estimate and the transverse Poincar\'e inequality give compact
embedding. The longitudinal eigenvalues are discrete, because $V_\alpha\to\infty$ gives a compact resolvent, and simple, because the equation is regular at $0$ for $\alpha\ge2$, so that the Neumann condition $g'(0)=0$ determines the eigenfunction up to a scalar.
For each fixed $\alpha$, the comparison in \eqref{eq:hornComparison},
with $W_0=1$ and $W_1=\sup|(e^{-x^\alpha})'|<\infty$, therefore gives
$\lambda_1<\lambda_2$ for all sufficiently small $\varepsilon>0$.
These horns also satisfy the hypotheses of Theorem~\ref{thm:main}.
\end{remark}

\begin{remark}[Failure of polynomial heat-kernel bounds]
\label{rem:hornNoPolynomial}
On any unbounded open set $D$ of finite area, the continuous embedding
$H^1(D)\hookrightarrow L^q(D)$ fails for every $q>2$. See Adams \cite{Adams1973}.
By the Sobolev--heat-kernel equivalence
\cite[Proposition~3]{BD}, this excludes every spatially uniform bound
$p_D(t,z,z')\le Ct^{-N/2}$ for $0<t\le1$ with finite $N>0$ (a bound with $N\le2$ would imply one with $N>2$ on $0<t\le1$).
For the horns the failure can be seen directly. On $\Horn_{\varepsilon,\alpha}$, the function $u=e^{sx^\alpha}$ has
\[
 \int u^2=2\varepsilon\int_0^\infty e^{(2s-1)x^\alpha}\,dx,\qquad
 \int|\nabla u|^2=2\varepsilon s^2\alpha^2\int_0^\infty x^{2\alpha-2}e^{(2s-1)x^\alpha}\,dx,
\]
both finite for $s<1/2$, while $\int u^q=2\varepsilon\int_0^\infty e^{(qs-1)x^\alpha}\,dx=\infty$ for $s\ge1/q$. For $q>2$ take $s=1/q$.
This does not preclude compact embedding into $L^2(D)$.
The horns therefore illustrate the greater scope of the compactness hypothesis in Theorem~\ref{thm:main}.
\end{remark}

\subsection{An unbounded rooms-and-passages domain}
\label{subsec:rooms-passages}
The following construction is motivated by the example of Ba\~{n}uelos and Davis \cite[Section~4]{BanuelosDavis1989}. It consists of shrinking disks joined by thin passages. The example distinguishes convergence of the normalized Dirichlet heat kernel that is uniform in one spatial variable from convergence that is uniform in both variables. This distinction is related to intrinsic ultracontractivity (IU) and Half-IU. Here we replace the disks with squares and choose the dimensions to ensure compactness of the Neumann resolvent and simplicity of the first positive eigenvalue. Unlike the Gaussian horn, this domain has repeated bottlenecks and infinitely many corners, although every finite truncation is Lipschitz. An additional advantage is that both spectral properties follow from elementary estimates on squares and rectangles, which simplify the analysis compared with the disks in the original construction.

Rooms-and-passages domains also appear in the analysis of Neumann essential spectra \cite{HSS} and comparisons with one-dimensional operators on their skeletons \cite{BEW}. The probabilistic background includes exit times and lifetimes of conditioned Brownian motion \cite{CM,BanuelosDavis1989}, and their connections with intrinsic ultracontractivity \cite{DS,BanuelosIU}. These connections motivate the geometry. No assertion about Dirichlet intrinsic ultracontractivity is needed below.

Let $L_n$ denote passage length and $h_n$ passage width. These differ from the cross-cut width $\ell$ in Section~\ref{sec:simplicity}. For $n\ge1$, set
\begin{equation}\label{eq:rp-parameters}
 r_n=e^{-n^2},\qquad L_n=\frac1n,\qquad
 h_n=n^2e^{-2(n+1)^2}=n^2r_{n+1}^2.
\end{equation}
Define $x_1=0$ and $x_{n+1}=x_n+r_n+L_n$, and let
\begin{align*}
 Q_n&=(x_n,x_n+r_n)\times(-r_n/2,r_n/2),\\
 P_n&=(x_n+r_n,x_{n+1})\times(-h_n/2,h_n/2).
\end{align*}
The domain is
\begin{equation}\label{eq:rp-domain}
 \Om=\operatorname{int}\!\left(\bigcup_{n\ge1}
       (\overline{Q_n}\cup\overline{P_n})\right).
\end{equation}
\begin{figure}[htbp]
\centering
\begin{tikzpicture}[x=1cm,y=1cm,font=\small,line cap=round,line join=round]
\path[fill=black!8]
(0,-1)--(0,1)--(2,1)--(2,.14)--(3.35,.14)
--(3.35,.65)--(4.65,.65)--(4.65,.09)--(5.65,.09)
--(5.65,.43)--(6.51,.43)--(6.51,.055)--(7.26,.055)
--(7.26,.28)--(7.82,.28)--(7.82,.035)--(8.25,.035)
--(8.25,-.035)--(7.82,-.035)--(7.82,-.28)--(7.26,-.28)
--(7.26,-.055)--(6.51,-.055)--(6.51,-.43)--(5.65,-.43)
--(5.65,-.09)--(4.65,-.09)--(4.65,-.65)--(3.35,-.65)
--(3.35,-.14)--(2,-.14)--(2,-1)--(0,-1)--cycle;
\draw[thick]
(0,-1)--(0,1)--(2,1)--(2,.14)--(3.35,.14)
--(3.35,.65)--(4.65,.65)--(4.65,.09)--(5.65,.09)
--(5.65,.43)--(6.51,.43)--(6.51,.055)--(7.26,.055)
--(7.26,.28)--(7.82,.28)--(7.82,.035)--(8.25,.035)
(8.25,-.035)--(7.82,-.035)--(7.82,-.28)--(7.26,-.28)
--(7.26,-.055)--(6.51,-.055)--(6.51,-.43)--(5.65,-.43)
--(5.65,-.09)--(4.65,-.09)--(4.65,-.65)--(3.35,-.65)
--(3.35,-.14)--(2,-.14)--(2,-1)--(0,-1);
\draw[thick,densely dashed] (8.25,.035)--(8.65,.035);
\draw[thick,densely dashed] (8.25,-.035)--(8.65,-.035);
\node at (1,0) {$Q_1$};
\node at (4,0) {$Q_2$};
\node at (6.08,0) {$Q_3$};
\node[above] at (7.54,.28) {$Q_4$};
\node[below] at (2.675,-.2) {$P_1$};
\node[below] at (5.15,-.15) {$P_2$};
\node[below] at (6.885,-.12) {$P_3$};
\draw[<->] (-.4,-1)--node[left] {$r_1$}(-.4,1);
\draw[thin,black!40] (2,.19)--(2,.87) (3.35,.19)--(3.35,.87);
\draw[<->] (2,.8)--node[above] {$L_1$}(3.35,.8);
\draw[<->] (2.675,-.14)--(2.675,.14);
\draw[thin] (2.675,.14)--(2.92,.4) node[above] {$h_1$};
\node at (4.2,-1.65) {$r_n=e^{-n^2},\qquad L_n=\dfrac1n,\qquad h_n=n^2e^{-2(n+1)^2}.$};

\end{tikzpicture}
\caption{Square rooms $Q_n$ of side $r_n$, joined by rectangular passages $P_n$ of length $L_n$ and width $h_n$. The interfaces are open inside the domain. The drawing is schematic. Room sizes and passage widths are enlarged for visibility. Dashed lines indicate continuation to infinity.}
\label{fig:rooms-passages}
\end{figure}

Figure~\ref{fig:rooms-passages} shows the arrangement schematically. The open interfaces between adjacent pieces belong to $\Om$.
Since $h_n<r_{n+1}<r_n$, each passage is narrower than both adjacent rooms.
Every vertical section is an interval containing the horizontal axis. Vertical contraction onto that axis shows that $\Om$ is contractible.
Moreover, $\sum_nL_n=\infty$, whereas
\begin{equation}\label{eq:rp-area}
 |\Om|=\sum_{n\ge1}(r_n^2+h_nL_n)
       =\sum_{n\ge1}\left(e^{-2n^2}+n e^{-2(n+1)^2}\right)<\infty.
\end{equation}
Every bounded horizontal interval meets only finitely many pieces.
The boundary is locally Lipschitz at every finite boundary point, but
the shrinking widths prevent a uniform positive scale for boundary charts.
Write $\Om_{\ge M}=\Om\cap\{(x,y):x>x_M\}$. Up to sets of area zero,
this is the union of $Q_n$ and $P_n$ for $n\ge M$.

\begin{theorem}\label{thm:rooms-passages}
The domain \eqref{eq:rp-domain} is an unbounded simply connected planar
domain of finite area with locally Lipschitz boundary. The embedding
$H^1(\Om)\hookrightarrow L^2(\Om)$ is compact. More precisely,
there exists $C<\infty$ such that
\begin{equation}\label{eq:rp-tail}
 \int_{\Om_{\ge M}}|u|^2
 \le\frac{C}{M^2}\left(\int_\Om|u|^2+\int_\Om|\nabla u|^2\right),
 \qquad M\ge2,\quad u\in H^1(\Om).
\end{equation}
Its Neumann eigenvalues, counted with multiplicity, satisfy
\[
 0=\lambda_0<\lambda_1<\frac45<\frac{10}{9}\le\lambda_2.
\]
In particular, the first positive eigenvalue is simple. Under
Hypothesis~\ref{hyp:smooth}, every real eigenfunction associated with
$\lambda_1$ satisfies the boundary-layer identities \eqref{eq:HS}.
\end{theorem}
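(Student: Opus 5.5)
The plan has five steps: the geometric facts, the tail estimate \eqref{eq:rp-tail}, compactness, an upper bound for $\lambda_1$ from a test function modelled on a one-dimensional string, and a lower bound for $\lambda_2$ by Neumann decoupling. The final sentence then follows from Theorem~\ref{thm:main}.

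\emph{Geometry.} Simple connectivity follows from the vertical contraction already noted. The area is \eqref{eq:rp-area}. Local Lipschitz regularity holds because every bounded piece of $\partial\Om$ is a finite polygon.

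\emph{Tail estimate.} I would argue room by room. Let $m_n$ be the mean of $u$ on $Q_n$ and $E_n=\int_{Q_n\cup P_n\cup Q_{n+1}}|\nabla u|^2$.
\begin{itemize}
\item The Poincar\'e--Wirtinger inequality on the square gives $\int_{Q_n}|u-m_n|^2\le (r_n/\pi)^2\int_{Q_n}|\nabla u|^2$.
\item To compare $m_{n+1}$ with $m_n$, pass through the cross-sectional averages of $u$ on $P_n$. In the passage, $|\bar u(s)-\bar u(s')|^2\le (L_n/h_n)\int_{P_n}|u_x|^2$.
\item At each end, I need the average over the segment of length $h_n$ on a face of a square of side $r_n$ or $r_{n+1}$, compared with that square's mean. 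The two-dimensional logarithmic capacity estimate should give $\lesssim \log(r/h_n)\int|\nabla u|^2\lesssim n^2\int|\nabla u|^2$ there.
\item Together these give $|m_{n+1}-m_n|^2\lesssim (L_n/h_n+n^2)E_n$.
\item Multiplying by $r_{n+1}^2$ and using $r_{n+1}^2L_n/h_n=n^{-3}$ from \eqref{eq:rp-parameters} gives the recursion
\[
 r_{n+1}^2m_{n+1}^2\le Cn^{-3}E_n+2(r_{n+1}/r_n)^2\,r_n^2m_n^2 .
\]
\item The passage mass satisfies $\int_{P_n}|u|^2\lesssim h_nL_n(\text{end values})+(h_n/\pi)^2\int_{P_n}|u_y|^2$, which is even smaller.
\end{itemize}
Since $(r_{n+1}/r_n)^2$ is superexponentially small, summing the recursion over $n\ge M$ (starting from $r_M^2m_M^2\le\int_{Q_M}u^2$, which is itself controlled by the previous step) yields \eqref{eq:rp-tail} with the factor $\sum_{n\ge M}n^{-3}\lesssim M^{-2}$. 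Compactness then follows as for the Gaussian horn: Rellich compactness on the Lipschitz truncations, a diagonal subsequence, and the uniform tails \eqref{eq:rp-tail}.

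\emph{Upper bound $\lambda_1<4/5$.} I would use a one-dimensional string model of $Q_1\cup P_1\cup\Om_{\ge2}$. Here $Q_1$ has unit mass, which acts as an almost clamped end. The passage $P_1$ is a string of length $1$ and density $h_1=e^{-8}$. The far part $\Om_{\ge2}$ is an end mass of size $\approx r_2^2=e^{-8}=h_1$. The corresponding mode $\sin(kx)$ satisfies $\cot k=k$, so $k\approx0.86$ and $k^2\approx0.74$. The test function is therefore $0$ on $Q_1$, $\sin(kx)$ along $P_1$, and the constant $\sin k$ on $\Om_{\ge2}$, minus its mean. Its Rayleigh quotient is computed explicitly. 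The perturbations coming from $Q_1$'s finite mass, from $h_2L_2$, and from $|\Om_{\ge3}|$ are of relative size $e^{-8}$ or smaller, so the quotient stays below $4/5$ once these are bounded by scalar enclosures. I also need $\lambda_1>0$ and $\lambda_0=0$ simple, which follow from Lemma~\ref{lem:anchor}.

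\emph{Lower bound $\lambda_2\ge10/9$ (main obstacle).} I would decouple at the interface between $Q_1$ and $P_1$ (Neumann bracketing, i.e.\ enlarging the form domain). This gives $\lambda_2(\Om)\ge$ the third eigenvalue of $H^1(Q_1)\oplus H^1(R)$ with $R=P_1\cup\Om_{\ge2}$, which equals $\min\{\pi^2,\lambda_1(R)\}$. It remains to show $\lambda_1(R)\ge10/9$. Heuristically $R$ is a free string of length $1$ carrying an end mass equal to its own mass; the mode $\cos(kx)$ with $\tan k=-k$ gives $\lambda\approx4.1$, leaving a wide margin.

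To make this rigorous I would apply the cross-cut machinery of Proposition~\ref{prop:crosscut} to the averaged function on $P_1$; transverse fluctuations there cost $\ge(\pi/h_1)^2$. I would then bound the coupling to $\Om_{\ge2}$ using the room estimates from the tail step: $u$ on $Q_2$ is close to its mean, and the mass beyond $Q_2$ is controlled by \eqref{eq:rp-tail}. A comparison with the weighted string problem, in the spirit of the min--max argument in Proposition~\ref{prop:crosscut}, should then give a lower bound near $4$. The delicate point is the junction $P_1\to Q_2$: the logarithmic trace estimate contributes an error of order $r_2^2\log(r_2/h_1)$, which is tiny. I would expect the bookkeeping of these constants to be the hardest part.

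Finally, simplicity together with compactness and finite area lets Theorem~\ref{thm:main} apply under Hypothesis~\ref{hyp:smooth}.
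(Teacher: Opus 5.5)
\textbf{Verdict.} Your treatment of the geometry, the tail estimate, compactness, and the bound $\lambda_1<4/5$ is sound, but the lower bound $\lambda_2\ge 10/9$ is not proved, and that bound is what gives simplicity.

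\textbf{What works.} The first four steps run parallel to the paper, which also telescopes room averages through the cross-sectional averages on the passages. The paper uses the cruder but explicit junction bound $|b-a|^2\le\frac{r}{2h}\int_Q|\nabla u|^2$ instead of your logarithmic one. For $\lambda_1$ it uses a linear ramp on $P_1$, with energy $e^{-8}$ and variance $>\frac54e^{-8}$, instead of your $\sin(kx)$ mode. There are three harmless slips.
\begin{itemize}
\item The passage mass contains the longitudinal term $\frac{8L_n^2}{\pi^2}\int_{P_n}|\nabla u|^2$, of order $n^{-2}$. It is not ``even smaller'' than the room terms; it is what fixes the rate $M^{-2}$ in \eqref{eq:rp-tail}, and it still sums to $M^{-2}E$.
\item $Q_1$ has side $e^{-1}$, so its first nonzero Neumann eigenvalue is $\pi^2e^2$, not $\pi^2$.
\item The mean correction in your test function has relative size about $e^{-6}$, not $e^{-8}$.
\end{itemize}

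\textbf{The gap.} Neumann decoupling along $\partial Q_1\cap\partial P_1$ is legitimate and reduces the claim to $\lambda_1(R)\ge 10/9$. After that you only describe a comparison that ``should'' give a value near $4$, and the ingredients you name cannot produce any explicit number.
\begin{itemize}
\item \eqref{eq:rp-tail} has an unspecified constant $C$ and has $\|u\|_2^2$ on its right-hand side.
\item Your logarithmic junction estimate is stated only up to $\lesssim$.
\item The region beyond $Q_2$ is not a small perturbation in Rayleigh-quotient terms, although its area is tiny. A function vanishing on $P_1\cup Q_2$ and rising along $P_2$ has quotient of order one. In your own string model, with length $L_2=\frac12$ and end-mass ratio $r_3^2/h_2=\frac14$, it is about $4.6$. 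That region therefore needs its own explicit Poincar\'e constant and cannot be absorbed as an error term in a comparison with the string on $P_1$.
\end{itemize}
The junction $P_1\to Q_2$ that you single out is actually harmless: any such junction term enters with a tiny coefficient in front of the energy. The real work is explicit control of the far region.

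\textbf{The missing idea.} Drop the sharp string eigenvalue and anchor at room means. By min--max it suffices to prove $\int u^2\le\frac{9}{10}\int|\nabla u|^2$ on the codimension-two space $\{\int_{Q_1}u=\int_{Q_2}u=0\}$. In your decoupled version this is the codimension-one space $\{\int_{Q_2}v=0\}\subset H^1(R)$.
\begin{itemize}
\item With $a_1=0$, the explicit room and passage estimates bound the mass on $Q_1\cup P_1$ by $\bigl(\frac{e^{-2}}{\pi^2}+2e^{-1}\bigr)\int_{Q_1}|\nabla u|^2+\frac{8}{\pi^2}\int_{P_1}|\nabla u|^2$.
\item With $a_2=0$, telescoping gives $|a_n|^2\le 2S_nE_T$ with $\sum_{n\ge3}W_nS_n\le\frac{121}{400}$.
\item The room and passage fluctuation coefficients for $n\ge 2$ are at most $\frac14$, so $\int_{\Om_{\ge2}}u^2\le\frac{171}{200}E_T$.
\end{itemize}
The decisive constant is $8/\pi^2<9/10$ on $P_1$; the rest is bookkeeping with explicit constants, which your plan leaves undone.
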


\begin{proof}
\emph{Compactness.}
First record explicit local estimates that will also be used for simplicity.
For a square $Q$ of side $r$, an opening $I$ of length $h\le r$ in one
side, and the respective averages $a=|Q|^{-1}\int_Q u$ and
$b=h^{-1}\int_Iu$, one has
\begin{equation}\label{eq:rp-square-opening}
 \int_Q|u-a|^2\le\frac{r^2}{\pi^2}\int_Q|\nabla u|^2,
 \qquad
 |b-a|^2\le\frac{r}{2h}\int_Q|\nabla u|^2.
\end{equation}
The first estimate is the Poincar\'e inequality on a square. To verify
the second with its stated constant, take $Q=(0,r)^2$ and the relevant
side to be $J=\{0\}\times(0,r)$. For
$g(y)=r^{-1}\int_0^r u(x,y)\,dx$, the one-dimensional Poincar\'e
inequality gives
\[
 \|g-a\|_{L^2(0,r)}^2\le\frac r{\pi^2}\int_Q|\partial_yu|^2.
\]
Also,
\[
 u(0,y)-g(y)=-\frac1r\int_0^r(r-x)\partial_xu(x,y)\,dx,
 \qquad
 \|u(0,\cdot)-g\|_{L^2(0,r)}^2
 \le\frac r3\int_Q|\partial_xu|^2.
\]
The triangle inequality followed by Cauchy--Schwarz yields
\[
 \int_J|u-a|^2
 \le r\left(\frac13+\frac1{\pi^2}\right)\int_Q|\nabla u|^2
 \le\frac r2\int_Q|\nabla u|^2.
\]
Averaging over $I\subset J$ proves the second estimate in
\eqref{eq:rp-square-opening}. Density and continuity of the trace
justify the calculation for $H^1(Q)$ functions.

Let $a_n$ be the average of $u$ on $Q_n$. The matching traces at the
openings and the elementary passage-end estimate
\[
 |b_{\rm right}-b_{\rm left}|^2
 \le\frac{L_n}{h_n}\int_{P_n}|\nabla u|^2
\]
imply
\begin{equation}\label{eq:rp-increments}
 |a_{n+1}-a_n|^2\le\rho_n E_n,
 \qquad
 \rho_n=\frac{r_n/2+L_n+r_{n+1}/2}{h_n},
\end{equation}
where
\[
 E_n=\int_{Q_n}|\nabla u|^2+\int_{P_n}|\nabla u|^2
                  +\int_{Q_{n+1}}|\nabla u|^2.
\]
Indeed, each of the three comparisons is bounded by the square root
of its resistance coefficient times the energy in the corresponding
piece; Cauchy--Schwarz gives \eqref{eq:rp-increments}.

The mass in a passage satisfies
\begin{align}
 \int_{P_n}|u|^2\le {}&4h_nL_n|a_n|^2
       +2L_nr_n\int_{Q_n}|\nabla u|^2
       +\frac{8L_n^2}{\pi^2}\int_{P_n}|\nabla u|^2.
 \label{eq:rp-passage-mass}
\end{align}
For completeness, let $m(s)$ be the transverse average on $P_n$,
$0<s<L_n$, and $b=m(0)$. The transverse Poincar\'e inequality and
the one-dimensional Dirichlet--Neumann inequality give
\begin{align*}
 \int_{P_n}|u-m(s)|^2&\le\frac{h_n^2}{\pi^2}
                                      \int_{P_n}|\partial_yu|^2,\\
 h_n\int_0^{L_n}|m(s)|^2\,ds
 &\le2h_nL_n|b|^2+\frac{8L_n^2}{\pi^2}
                                      \int_{P_n}|\partial_xu|^2.
\end{align*}
Use $|b|^2\le2|a_n|^2+(r_n/h_n)\int_{Q_n}|\nabla u|^2$,
the orthogonal decomposition into transverse average and fluctuation,
and $h_n\le L_n$ to obtain \eqref{eq:rp-passage-mass}.
For the rooms,
\begin{equation}\label{eq:rp-room-mass}
 \int_{Q_n}|u|^2
 \le r_n^2|a_n|^2+\frac{r_n^2}{\pi^2}\int_{Q_n}|\nabla u|^2.
\end{equation}

Put $E=\int_\Om|\nabla u|^2$, $A_1=0$, and
$A_n=\sum_{k=1}^{n-1}\rho_k$ for $n\ge2$.
Since $\sum_kE_k\le2E$, telescoping \eqref{eq:rp-increments} gives
\begin{equation}\label{eq:rp-average-global}
 |a_n|^2\le2|a_1|^2+4A_nE.
\end{equation}
Set $T_n=e^{2(n+1)^2}/n^3$. Because $r_n,r_{n+1}\le1/n$,
one has $T_n\le\rho_n\le2T_n$, and for $n\ge2$,
\begin{equation}\label{eq:rp-ratio}
 \frac{T_{n-1}}{T_n}
 =\left(\frac n{n-1}\right)^3e^{-4n-2}
 \le8e^{-10}<\frac1{100}.
\end{equation}
Consequently,
\[
 A_n\le C\frac{e^{2n^2}}{(n-1)^3}\quad(n\ge2),\qquad
 W_n:=r_n^2+4h_nL_n
 =e^{-2n^2}(1+4n e^{-4n-2})\le\frac{11}{10}e^{-2n^2}.
\]
It follows that
\[
 \sum_{n\ge M}W_nA_n\le\frac C{M^2},
 \qquad \sum_{n\ge M}W_n\le\frac C{M^2},\qquad M\ge2.
\]
The fluctuation coefficients in \eqref{eq:rp-passage-mass} and
\eqref{eq:rp-room-mass} are bounded by $C/n^2$.
Summing those estimates and using \eqref{eq:rp-average-global} gives
\[
 \int_{\Om_{\ge M}}|u|^2
 \le\frac C{M^2}(|a_1|^2+E).
\]
Since $|a_1|^2\le |Q_1|^{-1}\int_{Q_1}|u|^2$, this proves
\eqref{eq:rp-tail}. Each initial segment
$U_M=\Om\cap\{(x,y):x<x_M\}$ is a bounded Lipschitz polygonal
domain. Rellich compactness on these initial segments, a diagonal
subsequence, and the uniform tail estimate prove compactness on $\Om$.
The Neumann resolvent is therefore compact. Connectedness and finite
area imply that its zero eigenspace consists precisely of the constants.

\medskip
\emph{Simplicity.}
For an upper bound on $\lambda_1$, let $f$ vanish on $Q_1$, increase
linearly from zero to one along $P_1$, and equal one on the remaining
domain. Then $f\in H^1(\Om)$ and
$\int_\Om|\nabla f|^2=h_1/L_1=e^{-8}$.
Write $m_2=e^{-8}=|Q_2|=|P_1|$ and $m_1=e^{-2}=|Q_1|$.
Minimizing over constants, first on the whole domain and then on the
first three pieces, gives
\begin{align*}
 \int_\Om|f-f_\Om|^2
 &=\min_{c\in\R}\int_\Om|f-c|^2\\
 &\ge\min_{c\in\R}\int_{Q_1\cup P_1\cup Q_2}|f-c|^2
 =\frac{4m_2}{3}-\frac{9m_2^2}{4(m_1+2m_2)}.
\end{align*}
Here $f_\Om$ denotes the average on $\Om$.
Since $m_1/m_2=e^6>25$, the last expression exceeds $5m_2/4$.
The Rayleigh principle yields $\lambda_1<4/5$.

To bound $\lambda_2$, consider the subspace
\[
 V=\left\{u\in H^1(\Om):\int_{Q_1}u=0,\quad\int_{Q_2}u=0\right\}.
\]
It suffices to prove
\begin{equation}\label{eq:rp-codim-two}
 \int_\Om|u|^2\le\frac9{10}\int_\Om|\nabla u|^2,
 \qquad u\in V.
\end{equation}
Indeed, these are two independent $L^2$-orthogonality conditions,
so the min--max principle gives $\lambda_2\ge10/9$.
Fix $u\in V$, so that $a_1=a_2=0$. Estimates
\eqref{eq:rp-passage-mass}--\eqref{eq:rp-room-mass} give
\begin{align}
 \int_{Q_1\cup P_1}|u|^2
 &\le\left(\frac{e^{-2}}{\pi^2}+2e^{-1}\right)
                      \int_{Q_1}|\nabla u|^2
       +\frac8{\pi^2}\int_{P_1}|\nabla u|^2\notag\\
 &\le\frac9{10}\int_{Q_1\cup P_1}|\nabla u|^2.
 \label{eq:rp-first-block}
\end{align}

For the tail, put $E_T=\int_{\Om_{\ge2}}|\nabla u|^2$ and
$S_n=\sum_{k=2}^{n-1}\rho_k$ for $n\ge3$.
Since $a_2=0$, \eqref{eq:rp-increments} gives
\[
 |a_n|^2\le2S_nE_T.
\]
For $k\ge2$,
\[
 \rho_k=\left(1+\frac{k(r_k+r_{k+1})}{2}\right)T_k
 \le\frac{21}{20}T_k,
\]
because $k(r_k+r_{k+1})/2\le k e^{-k^2}\le2e^{-4}<1/20$.
Together with \eqref{eq:rp-ratio}, this gives
\[
 S_n\le\frac{21}{20}\frac{100}{99}T_{n-1}
 <\frac{11}{10}\frac{e^{2n^2}}{(n-1)^3}.
\]
Using the bound on $W_n$ from the compactness argument,
\begin{equation}\label{eq:rp-tail-series}
 \sum_{n=3}^\infty W_nS_n
 \le\frac{121}{100}\sum_{k=2}^\infty k^{-3}
 \le\frac{121}{400},
\end{equation}
where $\sum_{k=2}^\infty k^{-3}\le1/8+\int_2^\infty t^{-3}\,dt=1/4$.
For $n\ge2$ the fluctuation coefficients satisfy
\[
 \frac{r_n^2}{\pi^2}+2L_nr_n\le\frac14,
 \qquad \frac{8L_n^2}{\pi^2}\le\frac14.
\]
Thus \eqref{eq:rp-passage-mass}--\eqref{eq:rp-room-mass}, $a_2=0$,
and \eqref{eq:rp-tail-series} imply
\[
 \int_{\Om_{\ge2}}|u|^2
 \le\sum_{n=3}^\infty W_n|a_n|^2+\frac14E_T
 \le\left(\frac{121}{200}+\frac14\right)E_T
 =\frac{171}{200}E_T\le\frac9{10}E_T.
\]
Combining this with \eqref{eq:rp-first-block} proves
\eqref{eq:rp-codim-two}, and hence simplicity. The conditional hot
spots conclusion follows from Theorem~\ref{thm:main}.
\end{proof}

As an unbounded domain of finite area, this example also admits no
polynomial Neumann heat-kernel bound, by
Remark~\ref{rem:hornNoPolynomial}. The numerical bounds above are
deliberately nonoptimal. Their purpose is to separate the first two
positive eigenvalues without identifying the nodal set.

\section{On removing the simplicity assumption}
\label{sec:withoutSimplicity}

The examples given above do not exhaust the domains
to which Theorem~\ref{thm:main} or Corollary~\ref{cor:HKhot}
applies. Several results in the literature establish simplicity of the
first positive Neumann eigenvalue for specific classes of domains,
often in connection with results on the hot spots conjecture.
These classes include sufficiently elongated bounded convex
planar domains \cite[Proposition~2.4]{BB},
nonsquare lip domains \cite[Theorem~1(i)]{AB}, the class of smooth
convex planar domains covered by \cite[Theorem~2.6]{ABMirror},
and all non-equilateral triangles \cite[Theorem~1.2]{Siudeja}.
Moreover, among simply connected planar polygons with any fixed
number of vertices greater than or equal to four, almost every
polygon has simple Neumann spectrum. See
\cite[Theorem~1.1]{HJPolygons} for the precise statement and the
meaning of ``almost every.'' Convexity is not required. The latter paper establishes generic simplicity of the spectrum,
but does not prove a hot spots theorem.

These bounded Lipschitz domains have compact embedding
$H^1\hookrightarrow L^2$ and satisfy polynomial Neumann heat-kernel
bounds. Consequently, under Hypothesis~\ref{hyp:smooth},
Theorem~\ref{thm:main} and Corollary~\ref{cor:HKhot} apply to them.
For lip domains and triangles, the hot spots conclusion is already
known independently. See \cite[Theorem~1(ii)]{AB} and
\cite[erratum, Theorems~A and~4.1]{JudgeMondal}, respectively.

The explicit spectral comparisons in these notes concern domains
with nonsmooth boundary, particularly non-Lipschitz cusps, and
the unbounded Gaussian horn of finite area. The exponential cusp
and the Gaussian horn satisfy the compactness and simplicity
assumptions but admit no polynomial Neumann heat-kernel bound,
illustrating the broader scope of the compactness approach.

Simplicity is not necessary for a domain to satisfy the hot spots
conclusion. The square and the disk are bounded simply connected
Lipschitz domains whose first positive Neumann eigenvalue has
multiplicity two, yet every eigenfunction in the corresponding
eigenspace satisfies the nonstrict hot spots conclusion. These
examples motivate the question of whether the simplicity assumption
can be removed more generally.

The preceding results suggest two questions. Throughout, assume
Hypothesis~\ref{hyp:smooth}, and impose no restriction on the
multiplicity of the first positive Neumann eigenvalue.

\paragraph{Question 1: Lipschitz domains.}
Let $\Om\subset\R^2$ be a bounded simply connected Lipschitz domain.
Does every nonzero real eigenfunction $u$ associated with
$\lambda_1(\Om)$ satisfy
\[
 \sup_\Om u
 =
 \lim_{\delta\downarrow0}\sup_{B_\delta(\Om)}u,
 \qquad
 \inf_\Om u
 =
 \lim_{\delta\downarrow0}\inf_{B_\delta(\Om)}u,
\]
where
\[
 B_\delta(\Om)
 =
 \{x\in\Om:\dist(x,\partial\Om)<\delta\}?
\]
In particular, does this hold for every bounded simply connected polygonal domain,
including those whose first positive Neumann eigenvalue
may be multiple?

The result of Deng, Jiang, and Yang \cite{DengJiangYang2026}
does not settle this question, since the polygons considered
here need not be convex.

\paragraph{Question 2: Domains with compact embedding.}
Does the same conclusion hold for every simply connected planar
domain $\Om$ of finite area for which
$H^1(\Om)\hookrightarrow L^2(\Om)$ is compact, without assuming
boundedness or any regularity of $\partial\Om$?

In both questions, eigenfunctions are understood through their
smooth interior representatives, and infinite supremal or infimal
values are allowed. Question~2 includes Question~1, since bounded
Lipschitz domains have compact Sobolev embedding. The distinction
separates the multiplicity question for bounded Lipschitz domains
from its extension to domains with non-Lipschitz boundaries or
unbounded domains of finite area.

The approximation argument gives a partial answer. The
boundary-layer identities hold for at least one nonzero real
first positive Neumann eigenfunction. The following proposition
therefore gives a boundary-layer version of (HS3) in \cite{BB}.
For bounded domains and eigenfunctions continuous on the closure,
these identities are equivalent to the boundary inequalities
in (HS3). The two questions above ask for the corresponding
version of (HS2).

\begin{proposition}\label{prop:oneEigenfunction}
Assume Hypothesis~\ref{hyp:smooth}. Every simply connected planar domain $\Om$ of finite area with compact embedding $H^1(\Om)\hookrightarrow L^2(\Om)$ admits a nonzero real first positive Neumann eigenfunction satisfying \eqref{eq:HS}, regardless of the multiplicity of $\lambda_1(\Om)$.
\end{proposition}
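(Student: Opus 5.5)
We run the proof of Theorem~\ref{thm:main} again, but let the approximating eigenfunctions choose the limit instead of prescribing it. Fix a conformal bijection $F:\D\to\Om$, take any sequence $r_j\uparrow1$ with $r_j>1/2$, and choose normalized real first positive Neumann eigenfunctions $u_j$ on the analytic Jordan domains $\Om_{r_j}$. We make no attempt to control the multiplicity of $\lambda_1(\Om_{r_j})$; any choice works. Proposition~\ref{prop:spectral} does not use simplicity. It gives a subsequence and a normalized first positive Neumann eigenfunction $v$ on $\Om$ with $u_j\to v$ in $C^\infty_{\mathrm{loc}}(\Om)$. Simplicity was used there only to steer the limit to a prescribed $u$, and that step is not needed here. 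In particular $v$ is nonzero, since $\int_\Om v^2=1$, and it is real.

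Next we verify \eqref{eq:HS} for $v$. Suppose the supremum identity fails, that is, $\sup_\Om v>\lim_{\delta\downarrow0}\sup_{B_\delta(\Om)}v$. We apply Lemma~\ref{lem:superlevel} with $\Om_j=\Om_{r_j}$. This is legitimate because every compact subset of $\Om$ lies in $\Om_{r_j}$ for large $j$, and the lemma needs only finite area and local uniform convergence. It shows that $u_j$ has an interior local maximum in $\Om_{r_j}$ for large $j$. This contradicts Hypothesis~\ref{hyp:smooth}, since $\Om_{r_j}$ is a bounded smooth simply connected domain and $u_j$ is a nonzero real eigenfunction for its first positive eigenvalue.

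For the infimum we use the same subsequence. Note that $-u_j$ are also first positive eigenfunctions on $\Om_{r_j}$, and $-u_j\to-v$ locally uniformly. So Lemma~\ref{lem:superlevel} applied to $-v$ again contradicts the hypothesis. Both identities therefore hold for the single function $v$, with infinite values allowed exactly as in Theorem~\ref{thm:main}.

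The only point needing care is that one limit function must serve both the supremum and the infimum. This causes no difficulty, because Hypothesis~\ref{hyp:smooth} excludes local maxima and minima for every eigenfunction on $\Om_{r_j}$, so no further subsequence is required. I expect no real obstacle beyond confirming that the compactness and identification steps of Proposition~\ref{prop:spectral} are independent of simplicity, which they are.
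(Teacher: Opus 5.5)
Your proposal is correct and takes essentially the same approach as the paper. You extract a locally uniformly convergent subsequence of normalized first positive eigenfunctions on $\Om_{r_j}$ via Proposition~\ref{prop:spectral}, noting that its compactness and identification steps do not use simplicity, and then apply Lemma~\ref{lem:superlevel} to the limit $v$ and to $-v$ to contradict Hypothesis~\ref{hyp:smooth}.
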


\begin{proof}
Let $F:\D\to\Om$ be a conformal bijection, set $\Om_r=F(r\D)$, and choose normalized real first positive Neumann eigenfunctions $u_j$ on $\Om_{r_j}$, where $r_j\uparrow1$. Proposition~\ref{prop:spectral}, whose compactness and variational arguments do not require simplicity, gives a subsequence converging locally uniformly in $\Om$ to a normalized first positive Neumann eigenfunction $v$. If the supremum identity in \eqref{eq:HS} failed for $v$, Lemma~\ref{lem:superlevel} would give, for large $j$, an interior local maximum of $u_j$ on the bounded smooth simply connected domain $\Om_{r_j}$, contradicting Hypothesis~\ref{hyp:smooth}. The infimum identity follows by applying the same argument to $-v$.
\end{proof}

The remaining difficulty is to approximate each prescribed eigenfunction in the first positive eigenspace by first positive eigenfunctions on smooth approximating domains. When the limiting eigenvalue is multiple, the associated spectral cluster may split. Convergence of the entire cluster does not resolve this difficulty. Its elements need not be first positive eigenfunctions on the approximants, and Hypothesis~\ref{hyp:smooth} does not apply to their arbitrary linear combinations. The present argument therefore proves the conclusion for at least one limiting eigenfunction, but not for every eigenfunction in the eigenspace.

\begin{remark}
The same obstruction arises in the heat-kernel approach through spectral stability. The convergence of the spectral subspace associated with a multiple limiting eigenvalue does not ensure that each prescribed limiting eigenfunction can be approximated by first positive eigenfunctions. Thus this alternative approach does not by itself remove the simplicity assumption.
\end{remark}

\appendix
\section{Further classes with polynomial heat-kernel bounds}\label{app:classes}
This appendix presents two additional criteria for polynomial Neumann heat-kernel bounds and computes the quasihyperbolic order of the power cusps. These results supplement the examples and are not needed for the main proofs.

\subsection{Quasihyperbolic H\"older domains}
Here ``H\"older domain'' has the quasihyperbolic meaning used in \cite{BanuelosIU} and the references therein, rather than the boundary-graph meaning of Section~\ref{sec:examples}. Write $\delta(z)=\dist(z,\partial\Om)$ and define
\[
 k_\Om(z,w)=\inf_\gamma\int_\gamma\frac{|d\zeta|}{\delta(\zeta)},
\]
where the infimum runs over rectifiable curves $\gamma$ joining $z$ to $w$ in $\Om$.

\begin{remark}[Comparison with the hyperbolic metric]
For a proper simply connected planar domain $\Om$, normalize the hyperbolic metric by $\rho_{\D}(z)=2/(1-|z|^2)$, write $\rho_\Om$ for its density on $\Om$, and denote its distance by $h_\Om$. The Koebe quarter theorem and Schwarz's lemma give
\begin{equation}\label{eq:hyperbolicComparison}
 \frac1{2\delta(z)}\le\rho_\Om(z)\le\frac2{\delta(z)},\qquad
 \frac12k_\Om(z,w)\le h_\Om(z,w)\le2k_\Om(z,w).
\end{equation}
Thus either distance gives the same H\"older growth conditions below, up to constants. The hyperbolic distance is conformally invariant, whereas the quasihyperbolic distance generally is not.
\end{remark}

Following \cite{BanuelosIU} for order zero and the quasihyperbolic conventions in \cite[Section 1.2]{Aikawa}, a bounded domain is a \emph{H\"older domain of order zero} if, for some $z_0\in\Om$ and $A,B>0$,
\begin{equation}\label{eq:QHBzero}
 k_\Om(z,z_0)\le A\log\frac{\delta(z_0)}{\delta(z)}+B
 \qquad(z\in\Om).
\end{equation}
For $\eta>0$, it is a \emph{H\"older domain of order $\eta$} if, for some $z_0\in\Om$ and $A,B>0$,
\begin{equation}\label{eq:QHBpositive}
 k_\Om(z,z_0)\le
 A\left(\frac{\delta(z_0)}{\delta(z)}\right)^\eta+B
 \qquad(z\in\Om).
\end{equation}
The positive-order condition is the condition QHB$(\alpha)$ of \cite[Section 1.2]{Aikawa}, with Aikawa's $\alpha$ renamed $\eta$ here to avoid a clash with the cusp exponent. The order measures the growth of the quasihyperbolic distance, not the regularity of a boundary graph. Even the order-zero class contains bounded John domains, hence uniform domains and quasidisks such as the Koch snowflake. The next proposition gives the heat-kernel bound for quasihyperbolic H\"older domains of order zero. The power cusps of Section~\ref{subsec:powerCusps} provide examples of positive order.

\begin{proposition}\label{prop:QHBzeroHK}
Every bounded planar domain satisfying \eqref{eq:QHBzero} satisfies \eqref{eq:examplesSob} for every finite $q>2$. Consequently, for every $N>2$ there is a constant $C_{\Om,N}$ such that
\[
 p_\Om(t,z,w)\le C_{\Om,N}t^{-N/2},\qquad 0<t\le1,
\]
for almost every $z,w\in\Om$. In particular, $H^1(\Om)\hookrightarrow L^2(\Om)$ is compact.
\end{proposition}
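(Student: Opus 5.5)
The plan is to prove the Sobolev embedding \eqref{eq:examplesSob} for every finite $q>2$ directly. The heat-kernel bound then follows from the equivalence recorded in Section~\ref{sec:examples} (see \cite[Proposition 3]{BD}), with $N=2q/(q-2)$; every $N>2$ arises this way. Compactness then follows from Remark~\ref{rem:adds}. So everything reduces to an $L^q$ estimate for $H^1$ functions on a bounded domain satisfying \eqref{eq:QHBzero}.

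For the embedding I would use a Whitney decomposition $\{Q\}$ of $\Om$ and the chain (Boman/John-type) argument adapted to quasihyperbolic boundary conditions, as in Smith--Stegenga and Hajłasz--Koskela.

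\textbf{Step 1: chains.} Fix a central Whitney cube $Q_0\ni z_0$. By \eqref{eq:QHBzero}, each Whitney cube $Q$ of side $\ell(Q)$ is joined to $Q_0$ by a chain of Whitney cubes $Q_0=Q^0,Q^1,\dots,Q^m=Q$. The chain follows a quasihyperbolic near-geodesic, and its length is $m\le C(A\log(1/\ell(Q))+B)$. Adjacent cubes have comparable size, and each link $Q^i\cup Q^{i+1}$ supports a uniform Poincaré inequality.

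\textbf{Step 2: telescoping.} Write $u_Q$ for the mean of $u$ on $Q$. Telescoping along the chain gives
\[
|u_Q-u_{Q_0}|\le C\sum_i \|\nabla u\|_{L^2(Q^i\cup Q^{i+1})}.
\]
By Cauchy--Schwarz this is at most $C\,m^{1/2}\|\nabla u\|_{L^2(\Om)}\le C\bigl(\log(1/\ell(Q))\bigr)^{1/2}\|\nabla u\|_2$.

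\textbf{Step 3: summing over cubes.} On each cube, the local Sobolev inequality gives
\[
\|u-u_Q\|_{L^q(Q)}\le C_q\,\ell(Q)^{2/q}\|\nabla u\|_{L^2(Q)},
\]
and the bound is uniform in $q$-scaling because $2/q>0$ in two dimensions. Hence $\int_\Om|u|^q$ is bounded by three contributions. The local pieces $\sum_Q\|u-u_Q\|_{L^q(Q)}^q$ are controlled because $\ell(Q)\le C$ and $\sum_Q\|\nabla u\|_{L^2(Q)}^q\le\|\nabla u\|_2^q$ for $q\ge2$. The central term is $|\Om|\,|u_{Q_0}|^q\le C\|u\|_{L^2(Q_0)}^q$. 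The chain term is
\[
\sum_Q\ell(Q)^2|u_Q-u_{Q_0}|^q\le C\|\nabla u\|_2^q\sum_Q\ell(Q)^2\log(1/\ell(Q))^{q/2}.
\]

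\textbf{The main obstacle} is showing that this last sum is finite. Its finiteness needs a count of Whitney cubes: for $\ell(Q)=2^{-k}$ the number of cubes must be at most $C2^{(2-\sigma)k}$ for some $\sigma>0$, i.e.\ a Minkowski-type dimension bound on $\partial\Om$ below $2$. I would obtain this from \eqref{eq:QHBzero} via the known fact that order-zero QHB domains satisfy $|\{z:\delta(z)<s\}|\le Cs^{\sigma}$ (Smith--Stegenga). To prove it directly, I would first try a Koebe-type argument: points with $\delta(z)<s$ have quasihyperbolic distance $\le A\log(1/s)+B$ from $z_0$, and a volume-growth estimate along these chains bounds their area.

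If that count is too delicate, the fallback is to cite the Smith--Stegenga theorem itself, which states that order-zero Hölder (QHB) domains are $(q,2)$-Poincaré domains for all $q<\infty$. Combined with the $L^2$ term, that yields \eqref{eq:examplesSob} directly, and the rest of the plan is unchanged.
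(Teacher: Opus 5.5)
Your proposal is correct, but it takes a different route from the paper. The paper uses no chain argument. It writes the coefficient in \eqref{eq:QHBzero} as $1/\beta$ with $0<\beta\le1$ and quotes the Koskela--Onninen--Tyson inequality $\|u-u_\Om\|_{L^q}\le C\|\nabla u\|_{L^p}$, valid for $\max\{1,2-2\beta\}<p<2$ and $p\le q<2\beta p/(2-p)$ \cite{KOT}. It then lets $p\uparrow2$ to reach any finite $q$. Finally, H\"older's inequality on the finite-area domain replaces $\|\nabla u\|_{L^p}$ by $\|\nabla u\|_{L^2}$ and bounds $u_\Om$ by $\|u\|_{L^2}$.

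You work directly at the borderline exponent $p=2$. There the scale invariance of the planar Dirichlet integral makes each telescoping increment cost only local energy, so $|u_Q-u_{Q_0}|\le Cm_Q^{1/2}\|\nabla u\|_{L^2(\Om)}$. This is more self-contained, and it isolates the only geometric input, $\int_\Om(1+k_\Om(z,z_0))^{q/2}\,dz<\infty$. In fact your argument shows that integrability of every power of $k_\Om(\cdot,z_0)$ already gives \eqref{eq:examplesSob} for every finite $q>2$. That is a slightly more general criterion than \eqref{eq:QHBzero}.

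The price is that this input is itself the deep step. A naive version of your proposed ``Koebe-type'' count does not give it. Bounded-degree counting of Whitney squares in a quasihyperbolic ball of radius $A\log(1/s)+B$ yields only $O(s^{-cA})$ squares of side $s$, hence a boundary-layer area bound $O(s^{2-cA})$, which is useless once $cA\ge2$. The decay you need is the Smith--Stegenga theorem on exponential integrability of $k_\Om(\cdot,z_0)$ on H\"older domains. Combine it with $k_\Om(z,z_0)\ge\log(\delta(z_0)/\delta(z))$ if you want the form $|\{\delta<s\}|\le Cs^\sigma$. You should cite it rather than try to reprove it.

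For your fallback, the precise statement available is the one the paper uses: \cite{KOT} with $p<2$, followed by H\"older. Check the exact form before attributing a $(q,2)$ result to Smith--Stegenga.

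One small technical point: if consecutive chain squares meet only at a corner, their union supports no Poincar\'e inequality. Use slightly enlarged squares such as $\tfrac98Q$ for the links.

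Your passage from \eqref{eq:examplesSob} to the heat-kernel bound and to compactness is the same as the paper's.
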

\begin{proof}
In \eqref{eq:QHBzero} we may enlarge $A$ and adjust $B$: the function $\log(\delta(z_0)/\delta(z))$ is bounded below on a bounded domain, so a larger coefficient is compensated by a larger additive constant. Write the coefficient as $1/\beta$ with $0<\beta\le1$. The theorem of Koskela--Onninen--Tyson \cite{KOT} gives the Sobolev--Poincar\'e inequality
\[
 \|u-u_\Om\|_{L^q(\Om)}\le C\|\nabla u\|_{L^p(\Om)},
 \qquad u_\Om=|\Om|^{-1}\int_\Om u,
\]
provided $\max\{1,2-2\beta\}<p<2$ and $p\le q<2\beta p/(2-p)$. Given any finite $q>2$, choose $p<2$ sufficiently close to two. Finiteness of the area gives
\[
 \|\nabla u\|_p\le |\Om|^{1/p-1/2}\|\nabla u\|_2,
 \qquad
 \|u_\Om\|_q\le |\Om|^{1/q-1/2}\|u\|_2.
\]
Together these give \eqref{eq:examplesSob}. Proposition 3 of \cite{BD} then yields the kernel bound with $N=2q/(q-2)$, and hence with every $N>2$. Compactness of the embedding follows from Remark~\ref{rem:adds}.
\end{proof}
The argument does not yield the endpoint bound $Ct^{-1}$.

\subsection{Quasihyperbolic order of the power cusps}
The power cusps $\Om_\alpha$ of Section~\ref{subsec:powerCusps} have positive quasihyperbolic order. To avoid confusing the cusp exponent $\alpha>1$ with the order, put $\eta=1-1/\alpha\in(0,1)$ and fix $z_0=(1/2,0)$. Along the axis, as $x\downarrow0$,
\[
 \delta(x,0)\asymp x^\alpha,\qquad
 k_{\Om_\alpha}((x,0),z_0)\asymp x^{1-\alpha}
 \asymp\delta(x,0)^{-\eta}.
\]
For the upper bound integrate $r^{-\alpha}$ along the axis. For the lower bound, any path from $(x,0)$ to $z_0$ must traverse the horizontal coordinates between $x$ and $1/2$, while its boundary distance at horizontal coordinate $r$ is at most $r^\alpha$. Near the tip, the bounded slopes of the graphs make boundary distance comparable to the vertical gap from the nearer graph. Thus, for an off-axis point $(x,y)$ near the tip, a vertical segment to the axis costs at most $C[1+\log(x^\alpha/\delta(x,y))]$ in quasihyperbolic length. Since $\delta(x,y)\le x^\alpha$ and $\log(1/d)\le C_\eta d^{-\eta}$ for small $d$, this gives \eqref{eq:QHBpositive} with $\eta=1-1/\alpha$. The remaining Lipschitz boundary contributes only logarithmic growth. The axis estimate excludes order zero and every positive order smaller than $\eta$.

Thus, for every $0<\eta<1$ there is a positive-order example with a polynomial Neumann bound, whose exponent is
\[
 N=\alpha+1=\frac{2-\eta}{1-\eta}.
\]
This exponent is optimal for this cusp family \cite[Example 6]{BD}. Whether the positive-order quasihyperbolic condition alone implies a polynomial Neumann bound on every domain satisfying it is not addressed here.

\subsection{Finite decompositions and a slit domain}
\begin{proposition}\label{prop:finiteCover}
Let $\Om$ be bounded. Suppose there are finitely many open subdomains $\Om_1,\ldots,\Om_m\subset\Om$ whose union covers $\Om$ up to a null set. If each $\Om_j$ satisfies \eqref{eq:examplesSob} with an exponent $q_j>2$, then $\Om$ satisfies \eqref{eq:examplesSob} with $q=\min_jq_j>2$. Consequently $\Om$ has a polynomial Neumann heat-kernel bound and compact Sobolev embedding.
\end{proposition}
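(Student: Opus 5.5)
The plan is to glue the local Sobolev inequalities with a partition of unity, after first reducing them all to the common exponent $q=\min_j q_j$. For that reduction, note that each $\Om_j\subset\Om$ is bounded, so $|\Om_j|<\infty$ and H\"older's inequality gives
\[
\|v\|_{L^q(\Om_j)}\le|\Om_j|^{1/q-1/q_j}\|v\|_{L^{q_j}(\Om_j)} .
\]
Hence \eqref{eq:examplesSob} holds on every $\Om_j$ with exponent $q$ and a constant $C_j$. Next, for $u\in H^1(\Om)$ the restriction $u|_{\Om_j}$ lies in $H^1(\Om_j)$, with
\[
\|u|_{\Om_j}\|_{H^1(\Om_j)}\le\|u\|_{H^1(\Om)} .
\]

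The key observation is that no partition of unity is actually needed. Because the $\Om_j$ cover $\Om$ up to a null set,
\[
\int_\Om|u|^q\le\sum_{j=1}^m\int_{\Om_j}|u|^q .
\]
This gives
\[
\|u\|_{L^q(\Om)}^2\le\Bigl(\sum_j\|u\|_{L^q(\Om_j)}^q\Bigr)^{2/q}\le\sum_j\|u\|_{L^q(\Om_j)}^2 ,
\]
where the last step uses $2/q<1$, i.e.\ subadditivity of $s\mapsto s^{2/q}$. Therefore
\[
\|u\|_{L^q(\Om)}^2\le\Bigl(\sum_j C_j\Bigr)\bigl(\|\nabla u\|_{L^2(\Om)}^2+\|u\|_{L^2(\Om)}^2\bigr),
\]
which is \eqref{eq:examplesSob} for $\Om$ with exponent $q>2$. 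Overlaps between the pieces do no harm, since they only enlarge the right-hand side.

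For the consequences, the Sobolev--heat-kernel equivalence of \cite[Proposition 3]{BD}, recalled in Section~\ref{sec:examples}, turns \eqref{eq:examplesSob} into a bound $p_\Om(t,x,y)\le Ct^{-N/2}$ for $0<t\le1$ with $N=2q/(q-2)$. Alternatively, one can rerun the Nash argument from Step~2 of Proposition~\ref{prop:HK} verbatim on $\Om$. Remark~\ref{rem:adds} then gives compactness of $H^1(\Om)\hookrightarrow L^2(\Om)$, which uses finite area; that holds here because $\Om$ is bounded.

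I expect no real obstacle. The only point to verify carefully is that the covering hypothesis, together with the fact that the $\Om_j$ are open subsets of $\Om$, really gives the pointwise integral inequality above and the restriction property of $H^1$. Both are immediate, since each $\Om_j$ is open in $\Om$ and null sets do not affect the integrals. Boundedness is used only for the H\"older reduction to the common exponent and for applying Remark~\ref{rem:adds}.
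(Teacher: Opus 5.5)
Your proposal is correct and follows the paper's proof essentially step for step. The steps are H\"older's inequality to pass to the common exponent $q=\min_j q_j$, restriction of $u$ to each $\Om_j$, the covering bound $\int_\Om|u|^q\le\sum_j\int_{\Om_j}|u|^q$ combined with subadditivity of $s\mapsto s^{2/q}$, and finally \cite[Proposition 3]{BD} together with Remark~\ref{rem:adds} for the heat-kernel bound and compactness, with no partition of unity or matching across interfaces needed, exactly as the paper observes.
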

\begin{proof}
By H\"older's inequality and $|\Om_j|<\infty$, the estimate on each piece holds with the common exponent $q$. Restriction takes $u\in H^1(\Om)$ into $H^1(\Om_j)$. Since $2/q<1$,
\begin{align*}
 \norm{u}{L^q(\Om)}^2
 &\le\left(\sum_{j=1}^m\norm{u}{L^q(\Om_j)}^q\right)^{2/q}
 \le\sum_{j=1}^m\norm{u}{L^q(\Om_j)}^2\\
 &\le C\sum_{j=1}^m\norm{u}{H^1(\Om_j)}^2
 \le Cm\norm{u}{H^1(\Om)}^2.
\end{align*}
The equivalence between \eqref{eq:examplesSob} and the heat-kernel bound, together with Remark~\ref{rem:adds}, completes the proof. No extension or matching of functions across the interfaces is needed.
\end{proof}

For example, the slit disk
\[
 \Om=\D\setminus\{(x,0):0\le x<1\}
\]
is bounded and simply connected, but it is neither a Lipschitz domain nor a Jordan domain, and it does not satisfy the one-sided H\"older graph condition of Section~\ref{sec:examples} along the slit. Its upper and lower half-disks are Lipschitz subdomains covering it up to a null set, so Proposition~\ref{prop:finiteCover} supplies a polynomial bound with any chosen $N>2$. More generally, finite decompositions into H\"older graph domains or quasidisks give the same conclusion for some finite $N$.


\begin{thebibliography}{99}
\phantomsection
\addcontentsline{toc}{section}{References}

\bibitem{Adams1973}
R.~A.~Adams,
\emph{Some integral inequalities with applications to the imbedding of Sobolev spaces defined over irregular domains},
Trans.\ Amer.\ Math.\ Soc.\ \textbf{178} (1973), 401--429.
\href{https://doi.org/10.1090/S0002-9947-1973-0322494-0}{doi:10.1090/S0002-9947-1973-0322494-0}.

\bibitem{Ahlfors}
L.~V.~Ahlfors,
\emph{Conformal Invariants: Topics in Geometric Function Theory},
McGraw-Hill, New York, 1973.

\bibitem{Aikawa}
H.~Aikawa,
\emph{Intrinsic ultracontractivity via capacitary width},
Rev.\ Mat.\ Iberoam.\ \textbf{31} (2015), 1041--1106.
\href{https://doi.org/10.4171/RMI/863}{doi:10.4171/RMI/863}.

\bibitem{AB}
R.~Atar and K.~Burdzy,
\emph{On Neumann eigenfunctions in lip domains},
J.\ Amer.\ Math.\ Soc.\ \textbf{17} (2004), 243--265.
\href{https://doi.org/10.1090/S0894-0347-04-00453-9}{doi:10.1090/S0894-0347-04-00453-9}.

\bibitem{ABMirror}
R.~Atar and K.~Burdzy,
\emph{Mirror couplings and Neumann eigenfunctions},
Indiana Univ.\ Math.\ J.\ \textbf{57} (2008), 1317--1351.
\href{https://doi.org/10.1512/iumj.2008.57.3222}{doi:10.1512/iumj.2008.57.3222};
\url{https://arxiv.org/abs/math/0607400}.

\bibitem{AtarBurdzy2026}
R.~Atar and K.~Burdzy,
\emph{A starlike 3-dimensional counterexample to the hot spots conjecture},
arXiv:2610.19908 (2026).
\url{https://arxiv.org/abs/2610.19908}.

\bibitem{BanuelosIU}
R.~Ba\~{n}uelos,
\emph{Intrinsic ultracontractivity and eigenfunction estimates for Schr\"odinger operators},
J.\ Funct.\ Anal.\ \textbf{100} (1991), 181--206.
\href{https://doi.org/10.1016/0022-1236(91)90107-G}{doi:10.1016/0022-1236(91)90107-G}.

\bibitem{BB}
R.~Ba\~{n}uelos and K.~Burdzy,
\emph{On the ``hot spots'' conjecture of J.~Rauch},
J.\ Funct.\ Anal.\ \textbf{164} (1999), 1--33.

\bibitem{BCDuke}
R.~Ba\~{n}uelos and T.~Carroll,
\emph{Brownian motion and the fundamental frequency of a drum},
Duke Math.\ J.\ \textbf{75} (1994), 575--602;
Addendum, Duke Math.\ J.\ \textbf{82} (1996), 227.
\href{https://doi.org/10.1215/S0012-7094-96-08210-1}{doi:10.1215/S0012-7094-96-08210-1} (addendum).

\bibitem{BCAIF}
R.~Ba\~{n}uelos and T.~Carroll,
\emph{Extremal problems for conditioned Brownian motion and the hyperbolic metric},
Ann.\ Inst.\ Fourier \textbf{50} (2000), 1507--1532.
\href{https://doi.org/10.5802/aif.1798}{doi:10.5802/aif.1798}.

\bibitem{BanuelosDavis1989}
R.~Ba\~nuelos and B.~Davis,
\emph{Heat kernel, eigenfunctions, and conditioned Brownian motion in planar domains},
J.\ Funct.\ Anal.\ \textbf{84} (1989), 188--200.
\href{https://doi.org/10.1016/0022-1236(89)90118-3}{doi:10.1016/0022-1236(89)90118-3}.

\bibitem{BP}
R.~Ba\~{n}uelos and M.~M.~H.~Pang,
\emph{Stability and approximations of eigenvalues and eigenfunctions for the Neumann Laplacian, part I},
Electron.\ J.\ Differential Equations \textbf{2008} (2008), no.~145, 1--13.
\url{https://ejde.math.txstate.edu/Volumes/2008/145/abstr.html}.

\bibitem{BVan}
R.~Ba\~{n}uelos and M.~van den Berg,
\emph{Dirichlet eigenfunctions for horn-shaped regions and Laplacians on cross sections},
J.\ London Math.\ Soc.\ (2) \textbf{53} (1996), 503--511.

\bibitem{BEW}
B.~M.~Brown, W.~D.~Evans, and I.~G.~Wood,
\emph{Some spectral properties of Rooms and Passages domains and their skeletons},
preprint (2013), \url{https://arxiv.org/abs/1301.4396}.

\bibitem{BurdzyNote}
K.~Burdzy,
\emph{Removing boundary smoothness from the hot-spots conclusion in the simple-eigenvalue case},
research note, arXiv:2610.12622 (October 2026).
\url{https://arxiv.org/abs/2610.12622}.

\bibitem{BC}
K.~Burdzy and Z.-Q.~Chen,
\emph{Weak convergence of reflecting Brownian motions},
Electron.\ Comm.\ Probab.\ \textbf{3} (1998), paper no.~4, 29--33.
\href{https://doi.org/10.1214/ECP.v3-990}{doi:10.1214/ECP.v3-990}.

\bibitem{Burenkov}
V.~I.~Burenkov,
\emph{Sobolev Spaces on Domains},
Teubner-Texte zur Mathematik, vol.~137, Teubner, Stuttgart, 1998.

\bibitem{BD}
V.~I.~Burenkov and E.~B.~Davies,
\emph{Spectral stability of the Neumann Laplacian},
J.\ Differential Equations \textbf{186} (2002), 485--508.
\href{https://doi.org/10.1016/S0022-0396(02)00033-5}{doi:10.1016/S0022-0396(02)00033-5};
arXiv:math/0106160.

\bibitem{CM}
M.~Cranston and T.~R.~McConnell,
\emph{The lifetime of conditioned Brownian motion},
Z.\ Wahrsch.\ Verw.\ Gebiete \textbf{65} (1983), 1--11.
\href{https://doi.org/10.1007/BF00534989}{doi:10.1007/BF00534989}.

\bibitem{Davies}
E.~B.~Davies,
\emph{Heat Kernels and Spectral Theory},
Cambridge Tracts in Mathematics, vol.~92,
Cambridge University Press, Cambridge, 1989.

\bibitem{DS}
E.~B.~Davies and B.~Simon,
\emph{Ultracontractivity and the heat kernel for Schr\"odinger operators and Dirichlet Laplacians},
J.\ Funct.\ Anal.\ \textbf{59} (1984), 335--395.
\href{https://doi.org/10.1016/0022-1236(84)90076-4}{doi:10.1016/0022-1236(84)90076-4}.

\bibitem{DengJiangYang2026}
H.~Deng, X.~Jiang, and X.~Yang,
\emph{Near-boundary rigidity and strict maximum modulus for Neumann eigenfunctions on convex planar domains},
arXiv:2610.18546 (2026).
\url{https://arxiv.org/abs/2610.18546}.

\bibitem{GM}
J.~B.~Garnett and D.~E.~Marshall,
\emph{Harmonic Measure},
New Mathematical Monographs, vol.~2,
Cambridge University Press, Cambridge, 2005.

\bibitem{GSC}
P.~Gyrya and L.~Saloff-Coste,
\emph{Neumann and Dirichlet heat kernels in inner uniform domains},
Ast\'erisque \textbf{336}, Soci\'et\'e Math\'ematique de France, Paris, 2011.
\url{https://numdam.org/issues/AST_2011__336__R1_0/}.

\bibitem{HaleRaugel}
J.~K.~Hale and G.~Raugel,
\emph{Reaction-diffusion equation on thin domains},
J.\ Math.\ Pures Appl.\ (9) \textbf{71} (1992), 33--95.

\bibitem{Hayman}
W.~K.~Hayman,
\emph{Some bounds for principal frequency},
Applicable Anal.\ \textbf{7} (1977/78), 247--254.

\bibitem{HSS}
R.~Hempel, L.~A.~Seco, and B.~Simon,
\emph{The essential spectrum of Neumann Laplacians on some bounded singular domains},
J.\ Funct.\ Anal.\ \textbf{102} (1991), 448--483.
\href{https://doi.org/10.1016/0022-1236(91)90130-W}{doi:10.1016/0022-1236(91)90130-W}.

\bibitem{HJPolygons}
L.~Hillairet and C.~Judge,
\emph{Generic spectral simplicity of polygons},
Proc.\ Amer.\ Math.\ Soc.\ \textbf{137} (2009), 2139--2145.
\href{https://doi.org/10.1090/S0002-9939-09-09621-X}{doi:10.1090/S0002-9939-09-09621-X}.

\bibitem{Jimbo}
S.~Jimbo,
\emph{The singularly perturbed domain and the characterization for the eigenfunctions with Neumann boundary condition},
J.\ Differential Equations \textbf{77} (1989), 322--350.

\bibitem{Jones}
P.~W.~Jones,
\emph{Quasiconformal mappings and extendability of functions in Sobolev spaces},
Acta Math.\ \textbf{147} (1981), 71--88.

\bibitem{JudgeMondal}
C.~Judge and S.~Mondal,
\emph{Euclidean triangles have no hot spots},
Ann.\ of Math.\ (2) \textbf{191} (2020), 167--211;
\emph{Erratum: Euclidean triangles have no hot spots},
Ann.\ of Math.\ (2) \textbf{195} (2022), 337--362.
\href{https://doi.org/10.4007/annals.2020.191.1.3}{doi:10.4007/annals.2020.191.1.3};
\href{https://doi.org/10.4007/annals.2022.195.1.5}{doi:10.4007/annals.2022.195.1.5} (erratum).

\bibitem{KOT}
P.~Koskela, J.~Onninen, and J.~T.~Tyson,
\emph{Quasihyperbolic boundary conditions and Poincar\'e domains},
Math.\ Ann.\ \textbf{323} (2002), 811--830.
\href{https://doi.org/10.1007/s00208-002-0331-7}{doi:10.1007/s00208-002-0331-7}.

\bibitem{Murugan}
M.~Murugan,
\emph{Heat kernel for reflected diffusion and extension property on uniform domains},
arXiv:2304.03908v3 (2024).
\url{https://arxiv.org/abs/2304.03908}.

\bibitem{SmoothInput}
OpenAI,
\emph{Strict hot spots and absence of interior critical points on smooth simply connected planar domains},
OpenAI Math Release preprint, September 24, 2026.
\href{https://github.com/openai/math/blob/main/preprints/Strict-hot-spots-and-absence-of-interior-critical-points-on-smooth-simply-connected-planar-domains-September-24-2026/main.pdf}{Preprint dated September 24, 2026};
\href{https://github.com/openai/math/blob/main/lean/docs/369.md}{Lean scope note, item 369}.
Repository: \url{https://github.com/openai/math}. Accessed October 22, 2026.

\bibitem{PangII}
M.~M.~H.~Pang,
\emph{Stability and approximations of eigenvalues and eigenfunctions for the Neumann Laplacian, Part 2},
J.\ Math.\ Anal.\ Appl.\ \textbf{345} (2008), 485--499.
\href{https://doi.org/10.1016/j.jmaa.2008.04.026}{doi:10.1016/j.jmaa.2008.04.026}.

\bibitem{Siudeja}
B.~Siudeja,
\emph{Hot spots conjecture for a class of acute triangles},
Math.\ Z.\ \textbf{280} (2015), 783--806.
\href{https://doi.org/10.1007/s00209-015-1448-1}{doi:10.1007/s00209-015-1448-1}.

\end{thebibliography}
\end{document}